\newtheorem{theorem}{Theorem}[section]
\newtheorem{lemma}[theorem]{Lemma}
\newtheorem{proposition}[theorem]{Proposition}
\newtheorem{conjecture}[theorem]{Conjecture}
\newtheorem*{thm:main1}{Theorem~\ref{thm:main1}}
\newtheorem*{thm:main2}{Theorem~\ref{thm:main2}}
\newtheorem*{thm:main3}{Theorem~\ref{thm:main3}}
\newtheorem*{thm:main4}{Theorem~\ref{thm:main4}}
\newtheorem*{thm:main5}{Theorem~\ref{thm:main5}}
\newcommand\abs[1]{\lvert #1\rvert}
\newcommand{\pivot}{\wedge}
\newcommand\mat{\boxminus}
\newcommand\rank{\operatorname{rank}}
\newcommand\tri{\boxslash}
\newcommand\antimat{\boxtimes}
\newcommand\tw{\operatorname{tw}}
\newcommand\pw{\operatorname{pw}}
\newcommand\vbrit{\beta^\vc}
\newcommand\ebrit{\beta^\ec}
\newcommand\mbrit{\beta^\matc}
\newcommand\rkbrit{\beta^\cutrk}
\def\K_#1{{K_{#1}}}
\def\S_#1{\overline{K_{#1}}}
\newcommand\vc{\kappa}
\newcommand\ec{\eta}
\newcommand\matc{\nu}
\newcommand\cutrk{\rho}
\newcommand\td{\operatorname{td}}
\newcommand\rd{\operatorname{rd}}
\newcommand\lrw{\operatorname{lrw}}
\newcommand\cover{\tau}
\newcommand\mw{\operatorname{mw}}
\newcommand{\cC}{\mathcal{C}}
\newcommand{\cF}{\mathcal{F}}
\begin{document}
\title{Scattered classes of graphs}

\author[1,2]{O-joung Kwon\thanks{Supported by IBS-R029-C1 and the National Research Foundation of Korea (NRF) grant funded by the Ministry of Education (No. NRF-2018R1D1A1B07050294), and the European Research Council (ERC) under the European Union's Horizon 2020 research and innovation programme (ERC consolidator grant DISTRUCT, agreement No. 648527). Part of the research took place while Kwon was at Logic and Semantics, Technische Universit\"{a}t Berlin, Berlin, Germany.}}

\author[2,3]{Sang-il Oum\thanks{Supported by IBS-R029-C1 and the National Research Foundation of Korea (NRF) grant funded by the Korea government (MSIT) (No. NRF-2017R1A2B4005020).}}

\affil[1]{Department of Mathematics, Incheon National University, Incheon,~South~Korea.}
\affil[2]{Discrete Mathematics Group, Institute for Basic Science (IBS), Daejeon,~South~Korea.}
\affil[3]{Department of Mathematical Sciences, KAIST,  Daejeon,~South~Korea.}

\date\today
\maketitle
\footnotetext{E-mail addresses: \texttt{ojoungkwon@gmail.com} (O. Kwon), \texttt{sangil@ibs.re.kr} (S. Oum)}

\begin{abstract}
  For a class $\mathcal C$ of graphs $G$
  equipped with functions $f_G$ defined on subsets of $E(G)$ or $V(G)$,
  we say that $\mathcal{C}$ is \emph{$k$-scattered} with respect to $f_G$ if 
  there exists a constant $\ell$ such that 
  for every graph $G\in \mathcal C$, 
  the domain of $f_G$ can be partitioned into subsets of size at most $k$
  so that the union of every collection of the subsets has $f_G$ value at most $\ell$.
  We present structural characterizations of graph classes that are $k$-scattered with respect to several graph connectivity functions.

  In particular, our theorem for cut-rank functions provides a rough structural characterization of graphs having no $mK_{1,n}$ vertex-minor, which allows us to prove that such graphs have bounded linear rank-width.
\end{abstract}

\section{Introduction}

	All graphs in this paper are undirected and simple.
	For a graph $G$, we write $V(G)$ and $E(G)$
        to denote the vertex set and edge set of $G$, respectively.

	In the theory of split decompositions, Cunningham~\cite{Cunningham1982} introduced the concept of a \emph{brittle graph}.
        A \emph{split} of a graph $G$ is a partition $(A,B)$ of the vertex set such that
        $\abs{A},\abs{B}\ge 2$ and
        no two vertices in $A$ have distinct nonempty sets of neighbors in $B$.
        Brittle graphs are connected graphs such that every vertex bipartition into two sets of size at least $2$ is a split.
	All brittle graphs are complete graphs or stars.
	Brittle graphs form basic classes of graphs in
        canonical split decompositions.

	Motivated by brittle graphs, 
	we introduce the general concept of a partition $(X_1, X_2, \ldots, X_m)$ of the vertex set or the edge set of a graph 
	such that each $X_i$ has at most $k$ elements, and for every $I\subseteq \{1,2, \ldots, m\}$, some connectivity measurement between $\bigcup_{i\in I}X_i$ and the rest is at most $\ell$, for given integers $k$ and $\ell$.  
	Brittle graphs then can be seen as graphs that admit a partition $(X_1, X_2, \ldots, X_m)$, where 
    $X_1, X_2, \ldots, X_m$ consist of distinct individual vertices, and 
    for every $I\subseteq \{1,2, \ldots, m\}$, the cut-rank function of $\bigcup_{i\in I}X_i$ is at most $1$. 
    This concept trades off between the allowed sizes of parts in a partition and the allowed values for a selected connectivity measurement. 

	We formally define this concept and provide examples. 
        Let $X$ be a finite set and $f:2^X\to\mathbb{Z}$. 
        The \emph{$f$-width} of a partition $(X_1,X_2,\ldots,X_m)$ of $X$, for some $m$,
        is \[
          \max\left\{ f\bigl(\bigcup_{i\in I} X_i\bigr): I\subseteq \{1,2,\ldots,m\}\right\}.
        \]
        The \emph{$k$-brittleness} of $f$
        is the minimum $f$-width of all partitions of $X$ into 
        parts of size at most $k$.

        We are mainly interested in the following four functions arising from graphs  naturally.

	\begin{itemize}
	\item For a subset $F$ of $E(G)$, let $\vc_G(F)$ be the number of vertices incident with both an edge in $F$ and an edge not in $F$.

        \item For a subset $S$ of $V(G)$, let $\ec_G(S)$ be the number of edges incident with both a vertex in $S$ and a vertex not in $S$.

        \item For a subset $S$ of $V(G)$, let $\matc_G(S)$ be the size of a maximum matching of a bipartite subgraph of $G$ obtained by taking edges joining $S$ and $V(G)\setminus S$.

        \item For a subset $S$ of $V(G)$, let $\cutrk_G(S)$ be the rank of the $S\times (V(G)\setminus S)$ $0$-$1$ matrix over the binary field whose $(a,b)$-entry for $a\in S$, $b\notin S$ is $1$ if $a$, $b$ are adjacent and $0$ otherwise. This function is called the \emph{cut-rank} function of $G$. (See Oum~\cite{Oum2004} for more properties of the cut-rank functions.)

        \end{itemize}

          The $k$-brittleness of $\vc_G$, $\ec_G$, $\matc_G$, $\cutrk_G$
          are called 
          the \emph{vertex $k$-brittleness} $\vbrit_k(G)$,
          the \emph{edge $k$-brittleness} $\ebrit_k(G)$,
          the \emph{matching $k$-brittleness} $\mbrit_k(G)$, 
          the \emph{rank $k$-brittleness} $\rkbrit_k(G)$
          of $G$, respectively.
          We say that a class $\cC$ of graphs is 
          \emph{vertex $k$-scattered} if
          the vertex $k$-brittleness of graphs in $\cC$ is bounded,
          \emph{edge $k$-scattered} if
          the edge $k$-brittleness of graphs in $\cC$ is bounded,
          \emph{matching $k$-scattered} if
          the matching $k$-brittleness of graphs in $\cC$ is bounded,
          and
          \emph{rank $k$-scattered} if
          the rank $k$-brittleness of graphs in $\cC$ is bounded.

	A class $\cC$ of graphs is called a \emph{subgraph ideal} if 
        it contains 
        every graph isomorphic to a subgraph of a graph in $\cC$.
        We characterize subgraph ideals which are vertex $k$-scattered,  edge $k$-scattered, or matching $k$-scattered.
        We remark that corresponding $k$-brittleness parameters do not increase by taking a subgraph.
        Our first theorem characterizes a vertex $k$-scattered subgraph ideal.
        For a graph $H$, we write $mH$ to denote the disjoint union of $m$ copies of $H$. A set $A$ of vertices is \emph{independent} if no two vertices in $A$ are adjacent. (Note that $\emptyset$ is independent.)
        For a graph $H$ and an independent set $A\subsetneq V(H)$,
        we write $mH/A$ to denote the graph obtained from $mH$
        by identifying all $m$ copies of each vertex in $A$ into one vertex.
        Note that the number of vertices of $mH/A$ is $m(\abs{V(H)}-\abs{A})+\abs{A}$
        and $1H/A=H$.
        See Figure~\ref{fig:mH} for an illustration.

        \begin{figure}
          \centering
   \begin{tikzpicture}
     \tikzstyle{v}=[circle,draw,fill=black!50,inner sep=0pt,minimum width=4pt]
     \draw (0,1) node [v] (a){};
     \draw (0,0) node [v] (d){};
     \foreach \i in {1,2,3,4}{
       \draw (2,-3+1+\i) node [v] (b\i) {};
       \draw (2,-3+0.5+\i) node [v] (c\i) {};
       \draw (a)--(b\i)--(c\i)--(d);
     }
     \draw (0,0.5) ellipse  (.5 and 1.5);
     \draw (-.7,0) node {$A$};
   \end{tikzpicture}          
          \caption{The graph $4P_4/A$ for a path $P_4=abcd$ with $A=\{a,d\}$.}
          \label{fig:mH}
        \end{figure}
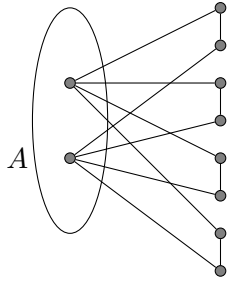

	\begin{theorem}\label{thm:main1}
          Let $k$ be a positive integer. A subgraph ideal $\cC$ is vertex $k$-scattered if and only if
          \[
            \{1H/A, 2H/A, 3H/A, 4H/A, \ldots\}\not\subseteq \cC
          \]
          for every connected graph $H$ with exactly $k+1$ edges
          and each of its independent sets $A\subsetneq V(H)$
          such that $H- A$ is connected.
	\end{theorem}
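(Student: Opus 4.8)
My plan is to treat the two directions of the equivalence separately; throughout write $\vc_G$ for the vertex-boundary function, so that $\vbrit_k$ is its $k$-brittleness.

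\emph{Necessity.} Suppose $\{mH/A:m\ge 1\}\subseteq\cC$ for some connected $H$ with exactly $k+1$ edges and some independent $A\subsetneq V(H)$ with $H-A$ connected; I will show $\vbrit_k(mH/A)\ge m/2$, so $\cC$ is not vertex $k$-scattered. Fix $m$, fix any partition $(X_1,\dots,X_r)$ of $E(mH/A)$ into parts of size at most $k$, let $H_j$ be the $j$-th copy of $H$, and let $P_j=V(H_j)\setminus A$ be its private vertex set; the $P_j$ are pairwise disjoint and nonempty, and (as $A$ is independent) every edge of $H_j$ is incident with $P_j$. Since $\abs{E(H_j)}=k+1>k$, the set $E(H_j)$ meets at least two parts. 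Let $I$ be a random subset of $\{1,\dots,r\}$ including each index independently with probability $1/2$; then the probability that $E(H_j)$ lies wholly inside $\bigcup_{i\in I}X_i$ or wholly outside it is at most $1/2$, so in expectation at least $m/2$ of the copies are \emph{split}, meaning $H_j$ has an edge inside $\bigcup_{i\in I}X_i$ and an edge outside. The key point is that a split copy $H_j$ has a vertex of $P_j$ incident with an edge inside $\bigcup_{i\in I}X_i$ and an edge outside: if not, then every vertex of $P_j$ has all of its incident edges on one side, and since $H_j[P_j]\cong H-A$ is connected these sides agree along all of $P_j$, forcing every edge of $H_j$ onto the same side, a contradiction. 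As the $P_j$ are disjoint, some choice of $I$ gives $\vc_{mH/A}\bigl(\bigcup_{i\in I}X_i\bigr)\ge m/2$, and since the partition was arbitrary, $\vbrit_k(mH/A)\ge m/2$.

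\emph{Sufficiency.} Suppose that for every admissible pair $(H,A)$ the family $\{mH/A:m\ge 1\}$ is not contained in $\cC$. There are finitely many connected graphs $H$ with exactly $k+1$ edges and, for each, finitely many independent $A\subsetneq V(H)$, so there is a constant $M=M(\cC)$ with $MH/A\notin\cC$ for all admissible $(H,A)$; as $\cC$ is a subgraph ideal, no $G\in\cC$ contains such an $MH/A$ as a subgraph. Hence it suffices to prove: if $G$ has no subgraph isomorphic to $MH/A$ for any connected $H$ with exactly $k+1$ edges and independent $A\subsetneq V(H)$ with $H-A$ connected, then $\vbrit_k(G)\le w_0$ for a constant $w_0=w_0(k,M)$. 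The crux is the structural claim that such a $G$ has a set $W\subseteq V(G)$ with $\abs{W}\le w_0$ such that every component $C$ of $G-W$ satisfies $\abs{E(C)}+e_G(C,W)\le k$, where $e_G(C,W)$ is the number of edges of $G$ with one end in $C$ and the other in $W$.

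To build $W$, first take a maximal collection $\cF$ of pairwise vertex-disjoint connected subgraphs of $G$, each with exactly $k+1$ edges. If $\abs{\cF}$ is at least $M$ times the number of isomorphism types of connected $(k+1)$-edge graphs, then $M$ members of $\cF$ are isomorphic to a common $H$, exhibiting $MH=MH/\emptyset$ in $G$, a contradiction; hence $\abs{\cF}$ is bounded, and with $W_1:=\bigcup_{F\in\cF}V(F)$ we have $\abs{W_1}$ bounded and, by maximality of $\cF$, every component of $G-W_1$ has at most $k$ edges. Call a component $C$ of $G-W_1$ \emph{bad} if $\abs{E(C)}+e_G(C,W_1)\ge k+1$. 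For a bad $C$, the edges of $C$ together with a suitable choice of edges from $C$ to $W_1$ (or, when $E(C)=\emptyset$, a star at the single vertex of $C$) form a connected subgraph $H'$ of $G$ with exactly $k+1$ edges such that $A':=V(H')\cap W_1$ is independent in $H'$ (since $H'$ uses no edge inside $W_1$), $A'\subsetneq V(H')$, $V(H')\setminus A'\subseteq V(C)$, and $H'-A'$ is connected (here using that $C$ is connected). Record for each bad $C$ the pair consisting of $A'$ as a subset of $W_1$ and the isomorphism type of $H'$ with $A'$ marked; since $\abs{W_1}$ and $\abs{V(H')}\le k+2$ are bounded, so is the number of possible records. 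If more than $M$ bad components shared a record, their private sets $V(H')\setminus A'$ --- pairwise disjoint, as they lie in distinct components of $G-W_1$ --- together with the common $A'$ would exhibit $MH'/A'$ in $G$, a contradiction. Hence boundedly many components are bad, each with at most $k+1$ vertices, and $W:=W_1\cup\bigcup_{C\text{ bad}}V(C)$ has $\abs{W}$ bounded. This $W$ works: a component of $G-W$ contained in a good component $C$ of $G-W_1$ is all of $C$ (no vertex of $C$ was added to $W$) and sends no edge to $W\setminus W_1$ (which lies in other components of $G-W_1$), so it remains good; bad components of $G-W_1$ are entirely absorbed into $W$.

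Finally, given such a $W$, partition $E(G)$ into the parts $E(C)\cup\{\text{edges of }G\text{ between }C\text{ and }W\}$ over the components $C$ of $G-W$ --- each of size at most $k$ --- together with an arbitrary partition of $E(G[W])$ into parts of size at most $k$. Every vertex outside $W$ has all of its incident edges inside a single part, hence is never a boundary vertex of any union of parts; so for every union of parts all of its boundary vertices lie in $W$, and its $\vc_G$-value is at most $\abs{W}\le w_0$. Thus $\vbrit_k(G)\le w_0$, finishing the proof. I expect the main obstacle to be the structural claim, and within it the verification that absorbing the bad components into $W$ creates no new bad components --- which works precisely because bad components have few vertices and good components of $G-W_1$ send no edge to the absorbed part; a secondary technical point is the claim in the necessity proof that a split copy always has a boundary vertex in its private part, which uses both that $A$ is independent and that $H-A$ is connected.
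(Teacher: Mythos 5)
Your proof is correct, and while the necessity direction is essentially the paper's own argument (Lemma~\ref{lem:etabase1}: a random union of parts splits each copy with probability at least $1/2$, and a split copy contributes a boundary vertex in its private part because $A$ is independent and $H-A$ is connected), your sufficiency direction takes a genuinely different and considerably shorter route than the paper's. The paper proves the contrapositive statement (Proposition~\ref{prop:vbrittleconverse}) by induction on $k$: it first extracts $mH_1/A_1$ for some $H_1$ with $k$ edges, then grows it by one edge using a sequence of auxiliary lemmas involving Tutte bridges, the Erd\H{o}s--Rado sunflower lemma, and K\"onig/Vizing edge-colorings. You instead argue directly on a graph $G$ excluding every $MH/A$: a maximal packing of vertex-disjoint connected $(k+1)$-edge subgraphs is bounded (else $MH/\emptyset\subseteq G$), its vertex set $W_1$ leaves only components with at most $k$ edges, and the ``bad'' components (those whose edges plus attachment edges to $W_1$ exceed $k$) are few because more than $M$ of them sharing an attachment pattern would assemble into a forbidden $MH'/A'$; absorbing the bad components into $W_1$ yields a bounded set $W$ from which the partition of $E(G)$ and the bound $\vbrit_k(G)\le\abs{W}$ are read off immediately, since no vertex outside $W$ can ever be a boundary vertex. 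Your argument buys simplicity and a self-contained quantitative bound $w_0(k,M)$; the paper's inductive route is what one would need if one wanted the intermediate statements about graphs with $j\le k$ edges for their own sake. One small point to make explicit: when you declare that two bad components ``share a record,'' the isomorphism type of $(H',A')$ must be recorded with $A'$ marked \emph{pointwise} (each vertex of $A'$ labelled by its identity in $W_1$), so that the witnessing isomorphisms agree on $A'$ and the union of the $M$ copies is literally $MH'/A'$ rather than some other amalgam; this only multiplies the number of records by a bounded factor, so nothing is lost.
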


	Our second theorem characterizes an edge $k$-scattered subgraph ideal.

	\begin{theorem}\label{thm:main2}
	Let $k$ be a positive integer. 	
	A subgraph ideal $\cC$ is edge $k$-scattered if and only if
        \[\{K_{1,1},K_{1,2},K_{1,3},\ldots\}\not\subseteq \cC\]
        and
        \[\{T, 2T, 3T, 4T, \ldots\}\not\subseteq \cC\]
        for every tree $T$ on $k+1$ vertices.
	\end{theorem}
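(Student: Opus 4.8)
The plan is to prove the two implications separately; recall that $\ec_G(S)$ is the number of edges of $G$ with exactly one endpoint in $S$, and that $\ebrit_k(G)$ is the least $\ec_G$-width of a partition of $V(G)$ into parts of size at most $k$. For \emph{necessity} I argue the contrapositive, treating the two displayed families in turn. Suppose first $\{K_{1,1},K_{1,2},\dots\}\subseteq\cC$. In any partition of $V(K_{1,n})$ into parts of size at most $k$, the part containing the centre $c$ holds at most $k-1$ leaves, so the union $S$ of the remaining parts consists of at least $n-k+1$ leaves, and since each leaf sends exactly one edge to $c\notin S$ we get $\ec_{K_{1,n}}(S)=\abs S\ge n-k+1$; hence $\ebrit_k(K_{1,n})\to\infty$ and $\cC$ is not edge $k$-scattered. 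Suppose instead $\{T,2T,3T,\dots\}\subseteq\cC$ for some tree $T$ on $k+1$ vertices, and fix any partition of $V(mT)$ into parts of size at most $k$. In each of the $m$ copies of $T$, the $k+1$ vertices meet at least two parts, so by connectedness that copy has an edge $e_j$ whose endpoints lie in distinct parts $X_{a(j)}$ and $X_{b(j)}$, and the $e_j$ are pairwise distinct. Assigning each part an independent uniform bit and letting $S$ be the union of the parts with bit $1$, each $e_j$ is a boundary edge of $S$ with probability $\tfrac12$, so some choice of $S$ has $\ec_{mT}(S)\ge m/2$; thus $\ebrit_k(mT)\ge m/2\to\infty$.

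For \emph{sufficiency}, assume both conditions hold. Since $\cC$ is a subgraph ideal, the first condition gives a constant $d$ with $\Delta(G)\le d$ for every $G\in\cC$ (otherwise some member contains $K_{1,d+1}$ and hence all $K_{1,n}$). Let $T^{(1)},\dots,T^{(r)}$ be the trees on $k+1$ vertices up to isomorphism; the second condition gives integers $m_1,\dots,m_r$ with $m_iT^{(i)}\notin\cC$, and we set $m^\ast=\max_i m_i$. I claim every $G\in\cC$ admits a partition of bounded $\ec_G$-width: make each connected component of $G$ with at most $k$ vertices its own part, and split every larger component arbitrarily into parts of size at most $k$. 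As $G$ has no edge between distinct components and edges inside small components never cross, the $\ec_G$-width of this partition is at most the number of edges lying inside components on more than $k$ vertices. To bound that, I use the following lemma: \emph{for all positive integers $d$ and $k$ there is a constant $C=C(d,k)$ such that every connected graph $K$ with $\Delta(K)\le d$ and $n\ge k+1$ vertices contains at least $(n-k)/C$ pairwise vertex-disjoint subtrees, each on exactly $k+1$ vertices.} Granting it, a component on $n$ vertices contains, by pigeonhole over the $r$ isomorphism types, at least $(n-k)/(Cr)$ vertex-disjoint copies of some $T^{(i)}$, and $(n-k)/(Cr)\ge m^\ast$ would give $m_iT^{(i)}\subseteq G$; hence every component has fewer than $k+Crm^\ast$ vertices. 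The same pigeonhole applied to one $(k+1)$-vertex subtree of each large component shows $G$ has at most $r(m^\ast-1)$ components on more than $k$ vertices, so the number of edges inside large components is less than $\tfrac12\,d\,(k+Crm^\ast)\,r(m^\ast-1)$, a bound depending only on $\cC$ and $k$; thus $\ebrit_k$ is bounded on $\cC$.

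Proving the lemma is the one step that needs real work, and I expect it to be the main obstacle. I would pass to a spanning tree of $K$, whose maximum degree is still at most $d$, and peel subtrees off greedily: while some component of the current forest has at least $k+1$ vertices, grow a subtree inside that component one vertex at a time until it has exactly $k+1$ vertices, record it, and delete its vertices. The point is that in a tree a $(k+1)$-vertex subtree $S$ is joined to the rest by at most $\sum_{v\in V(S)}(\deg v-\deg_S v)\le(k+1)d-2k$ edges, and by acyclicity each such edge produces exactly one new component; hence after $t$ deletions the forest has at most $1+t((k+1)d-2k)$ components, all of size at most $k$ once the process halts. Counting vertices gives $n\le t(k+1)+k\bigl(1+t((k+1)d-2k)\bigr)$, i.e.\ $t\ge(n-k)/C$ with $C=(k+1)+k((k+1)d-2k)$, and the $t$ recorded subtrees are vertex-disjoint. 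I expect the only subtleties to be this component count — which is exactly where acyclicity enters — and a routine check that the degenerate cases $d=1$ or $k=1$ cause no division trouble.
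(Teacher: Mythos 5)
Your proof is correct, and while the necessity half coincides with the paper's (the random-bipartition argument for $mT$ is exactly Lemma~\ref{lem:edgeforward1}, and the centre-part count for stars is Lemma~\ref{lem:edgeforward2}), your sufficiency half takes a genuinely different route. The paper proves the quantitative Proposition~\ref{prop:edgebrittleconverse} by induction on $k$: from a graph of large edge $k$-brittleness it extracts many disjoint copies of a $k$-vertex tree, builds a minimal forest of connecting edges, and uses K\H{o}nig's theorem plus a random bipartition to grow the tree by one vertex, with Vizing's theorem handling the base case. You instead read off global structure directly from the two excluded families: bounded maximum degree from the star condition, then a greedy tree-packing lemma (peel $(k+1)$-vertex subtrees off a spanning tree, controlling the component count via acyclicity and the degree bound) to show that each component has boundedly many vertices and that only boundedly many components exceed $k$ vertices, after which the partition into whole small components plus arbitrary chunks of the few large ones has bounded $\ec_G$-width. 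Your argument avoids the induction on $k$ entirely and yields a cleaner structural picture of edge $k$-scattered ideals; it also recovers the quantitative unavoidable-subgraph statement with explicit (and arguably simpler) bounds, since a graph excluding $K_{1,n}$ and all $nT$ as subgraphs satisfies your hypotheses with $d=n-1$ and $m^\ast=n$. What the paper's inductive scheme buys is uniformity: the same ``grow the obstruction one vertex at a time'' template is reused almost verbatim for Theorems~\ref{thm:main1} and~\ref{thm:main3}, where a bounded-degree reduction of your kind is not available. Your peeling lemma and its degenerate cases ($d=1$, where the statement is vacuous for $k\ge 2$) check out.
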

	
        Our third theorem characterizes a matching $k$-scattered subgraph ideal. 

	\begin{theorem}\label{thm:main3}
	Let $k$ be a positive integer. 	
	A subgraph ideal $\cC$ is matching $k$-scattered if and only if
        \[
          \{T, 2T, 3T, \ldots\}\not\subseteq \cC
        \]
        for every tree $T$ on $k+1$ vertices.
	\end{theorem}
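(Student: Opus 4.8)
The plan is to prove the two implications separately; the backward (sufficiency) direction carries the real content.

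For necessity, I would argue by contraposition. Suppose some tree $T$ on $k+1$ vertices has $mT \in \cC$ for every $m \ge 1$; the goal is to show $\mbrit_k(mT) \to \infty$, which forces $\cC$ not to be matching $k$-scattered. Given an arbitrary partition of $V(mT)$ into parts of size at most $k$, each of the $m$ copies of $T$, being connected on $k+1 > k$ vertices, cannot lie in a single part, so it contains an edge $e_j$ with ends in two different parts. The edges $e_1, \dots, e_m$ are pairwise vertex-disjoint, being in distinct copies. A uniformly random $2$-colouring of the parts separates each $e_j$ with probability $\tfrac12$, so some colouring separates at least $m/2$ of them; letting $S$ be the union of one colour class, the separated $e_j$ form a matching of the bipartite graph between $S$ and its complement, whence $\matc_{mT}(S) \ge m/2$. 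Since the partition was arbitrary, $\mbrit_k(mT) \ge m/2$, which is unbounded.

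For sufficiency, the first step is bookkeeping: for each tree $T$ on $k+1$ vertices choose $m_T$ with $m_T T \notin \cC$; since $\cC$ is a subgraph ideal this gives $mT \notin \cC$ for all $m \ge m_T$, and since there are only finitely many---say $t$---such trees, the value $m^\ast := \max_T m_T$ has $m^\ast T \notin \cC$ for every such $T$, hence no $G \in \cC$ contains $m^\ast T$ as a subgraph. Now fix $G \in \cC$ and greedily choose a maximal family of pairwise vertex-disjoint $(k+1)$-subsets $U_1, \dots, U_r$ of $V(G)$ each inducing a connected subgraph; set $W = \bigcup_i U_i$. A spanning tree of each $G[U_i]$ is one of the $t$ trees on $k+1$ vertices, so if $r \ge t m^\ast$ then pigeonhole produces $m^\ast$ indices with isomorphic spanning trees, giving $m^\ast T \subseteq G$ (the $U_i$ being disjoint)---a contradiction. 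Thus $\abs{W} \le (k+1)(t m^\ast - 1)$, a constant $c$ depending only on $k$ and $\cC$. By maximality $G - W$ has no connected induced subgraph on $k+1$ vertices, so every component of $G - W$ has at most $k$ vertices. I would then take the partition of $V(G)$ whose parts are the singletons $\{w\}$ for $w \in W$ together with the vertex sets of the components of $G - W$; all parts have size at most $k$. For any union $S$ of these parts, $S \cap V(G - W)$ is a union of whole components, so no edge of $G - W$ crosses the cut $(S, V(G) \setminus S)$; hence every crossing edge is incident with $W$, and a matching across the cut has at most $\abs{W} \le c$ edges. Therefore $\mbrit_k(G) \le c$ for all $G \in \cC$, as required.

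I expect the forward direction to be routine (a one-line averaging argument). The crux of the backward direction is the greedy extraction of a bounded-size set $W$ meeting every connected subgraph on $k+1$ vertices---the bound coming from pigeonhole over the finitely many trees on $k+1$ vertices---after which only components of size at most $k$ survive and the matching across every union of parts is pinned down by $\abs{W}$. It is worth noting that, in contrast with the edge $k$-scattered characterization (Theorem~\ref{thm:main2}), no hypothesis ruling out arbitrarily large stars is needed here, since in a star every edge crossing a vertex cut meets the centre, so the maximum matching across any cut is at most $1$.
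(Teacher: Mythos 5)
Your proof is correct. The necessity direction is essentially identical to the paper's Lemma~\ref{lem:edgeforward1}(ii): a connected $(k+1)$-vertex graph must meet two parts of any partition into parts of size at most~$k$, and a random bipartition of the parts separates half of the resulting vertex-disjoint crossing edges, which form a matching.

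For sufficiency your route differs from the paper's in a way worth noting. The paper proves the contrapositive (Proposition~\ref{prop:matchingbrittleconverse}): it first establishes Lemma~\ref{lem:delmat}, that deleting a vertex lowers $\beta_k^\nu$ by at most~$1$, and uses this as a potential function to guarantee that the greedy extraction of disjoint connected $(k+1)$-vertex subgraphs can be iterated $(k+1)^{k-1}(n-1)+1$ times whenever $\beta_k^\nu(G)>(k+1)^k(n-1)$; it then pigeonholes over the $(k+1)^{k-1}$ labelled trees on $k+1$ vertices to produce $nT$. You share the greedy-extraction-plus-pigeonhole skeleton but run the logic in the direct direction: a \emph{maximal} disjoint family must have fewer than $t\,m^\ast$ members (pigeonhole over isomorphism types of trees, using that $\cC$ is a subgraph ideal to upgrade $m_TT\notin\cC$ to $mT\notin\cC$ for $m\ge m_T$), so its union $W$ has bounded size, and you then exhibit an explicit witnessing partition --- singletons of $W$ together with the components of $G-W$, each of size at most $k$ by maximality --- whose $\nu_G$-width is at most $\abs{W}$ because every crossing edge meets $W$ and matching edges are vertex-disjoint. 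This makes Lemma~\ref{lem:delmat} unnecessary and is arguably more self-contained; the paper's contrapositive formulation buys a quantitative ``large brittleness forces $nT$'' statement with a controlled function $\ell(k,n)$, which is the form it reuses in the other sections. Your closing remark about stars is also correct and explains accurately why the extra $K_{1,n}$ obstruction of Theorem~\ref{thm:main2} does not appear here.
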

	
        Finally we characterize rank $k$-scattered graph classes. 
        As the cut-rank function may increase when we take a subgraph, 
        subgraph ideals are not suitable for the study of rank $k$-scattered graph classes.
	For instance, complete graphs are rank $1$-scattered and yet 
        an arbitrary graph is a subgraph of a complete graph.

        Instead of subgraphs, the containment relation called \emph{vertex-minors} 
        is more suitable for the study of rank $k$-scattered graph classes.
	A vertex-minor of a graph $G$ is an induced subgraph of a graph that can be obtained from $G$ by a sequence of \emph{local
  complementations}~\cite{Bouchet1987a,Bouchet1989a,Bouchet1990,Oum2004}, 
  where local complementation at a vertex $v$ is an operation to flip the adjacency relations between every pair of neighbors of $v$.
  The precise definition will be presented in Section~\ref{sec:prelim}.
  The cut-rank function is preserved when applying local complementations~\cite{Bouchet1989a,Oum2004}
  and therefore, the rank $k$-brittleness of a graph does not increase when taking vertex-minors.
	
	A class $\mathcal{C}$ of graphs is called a \emph{vertex-minor ideal} if 
it contains every graph isomorphic to a vertex-minor of a graph in $\mathcal C$.
	Our last theorem characterizes rank $k$-scattered vertex-minor ideals.

	\begin{theorem}\label{thm:main4}
	Let $k$ be a positive integer. 	
	A vertex-minor ideal $\cC$ is rank $k$-scattered if and only if
        \[
          \{ H, 2H, 3H, 4H, \ldots\}\not\subseteq \cC
        \]
        for every connected graph $H$ on $k+1$ vertices.
	\end{theorem}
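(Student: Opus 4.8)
Necessity is immediate. Suppose $\{mH:m\ge1\}\subseteq\cC$ for some connected graph $H$ on $k+1$ vertices; I claim $\rkbrit_k(mH)\ge m/2$, so rank $k$-brittleness is unbounded on $\cC$. Write $H_1,\dots,H_m$ for the copies of $H$. Since no edge of $mH$ joins distinct copies, for every $S\subseteq V(mH)$ the $S\times(V(mH)\setminus S)$ adjacency matrix splits into blocks along the copies, so $\cutrk_{mH}(S)=\sum_{i=1}^m\cutrk_{H_i}(S\cap V(H_i))$. Fix a partition $(X_1,\dots,X_r)$ of $V(mH)$ into parts of size at most $k$. Since $\abs{V(H_i)}=k+1>k$, each $H_i$ meets at least two parts; let $J_i$ index the parts meeting $V(H_i)$. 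Choose $I\subseteq\{1,\dots,r\}$ by including each index independently with probability $\tfrac12$ and set $S=\bigcup_{j\in I}X_j$. Then $S\cap V(H_i)$ is empty exactly when $I\cap J_i=\emptyset$ and equals $V(H_i)$ exactly when $J_i\subseteq I$, each of probability $2^{-\abs{J_i}}\le\tfrac14$; hence it is a proper nonempty subset of $V(H_i)$ with probability at least $\tfrac12$, and then $\cutrk_{H_i}(S\cap V(H_i))\ge1$ because $H_i$ is connected. So $\mathbb{E}[\cutrk_{mH}(S)]\ge m/2$, some union of parts has cut-rank at least $m/2$, and since the partition was arbitrary $\rkbrit_k(mH)\ge m/2$.

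For sufficiency, let $\cC$ be a vertex-minor ideal with $\{mH:m\ge1\}\not\subseteq\cC$ for every connected graph $H$ on $k+1$ vertices. There are finitely many such $H$, so some integer $M$ satisfies $m_HH\notin\cC$ for a threshold $m_H\le M$ for each of them; since $mH$ is an induced subgraph of $MH$ whenever $m\le M$ and $\cC$ is a vertex-minor ideal, no member of $\cC$ has $MH$ as a vertex-minor for any connected $H$ on $k+1$ vertices. It therefore suffices to establish the following rough structure statement: there is $\ell=\ell(k,M)$ such that every graph having no $MH$ vertex-minor (over all connected $H$ on $k+1$ vertices) has rank $k$-brittleness at most $\ell$. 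I would prove the contrapositive, namely that there is a function $N(k,m)$ such that $\rkbrit_k(G)\ge N(k,m)$ forces $G$ to have an $mH$ vertex-minor for some connected graph $H$ on $k+1$ vertices.

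The plan for this core statement is an induction on $k$ that grows the size of the obstruction. For $k=1$ we have $\rkbrit_1(G)=\max_S\cutrk_G(S)$, so the task reduces to the familiar-type fact that a single cut of huge rank forces a large induced matching, hence $mH$ with $H=K_2$, as a vertex-minor: a Ramsey argument over binary matrices locates a large identity, triangular, or $J-I$ pattern inside the cut, and each such pattern is converted into an induced matching (after a bipartization step) by a short sequence of local complementations. For the inductive step, $\rkbrit_k(G)$ large gives $\rkbrit_{k-1}(G)\ge\rkbrit_k(G)$ large, hence, by induction and one more Ramsey step over the finitely many isomorphism types, a large $mH'$ vertex-minor with $H'$ connected on $k$ vertices; the goal is to enlarge each of the $m$ copies by one vertex. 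This is exactly where the extra strength -- partitions into parts of size up to $k$, not merely $k-1$, are also bad -- must enter: if many copies of $H'$ extend to connected $(k+1)$-vertex pieces that can be made pairwise nonadjacent by local complementations, one more Ramsey step finishes; otherwise one packages the copies of $H'$ together with their attachment to the rest of $G$ into parts of size at most $k$ and uses submodularity of the cut-rank function to bound all unions of these parts, contradicting that $\rkbrit_k(G)$ is large.

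The main obstacle, at every level, is the disentangling: passing from a long, internally connected but externally uniform substructure to a genuine disjoint union by annihilating the inter-piece adjacencies with local complementations. That this cannot be avoided by working with induced subgraphs alone is illustrated by dense scattered graphs such as $K_n$, which has every cut of rank $1$ (so $\rkbrit_k(K_n)=1$) yet has no $mH$ vertex-minor for connected $H$ on $k+1$ vertices with $m\ge2$; a correct argument must both use local complementation to disentangle and see why it cannot manufacture an obstruction out of $K_n$. By contrast, setting up the auxiliary linear-type decomposition of the cut-rank function invoked above should be routine given submodularity, so I expect essentially all of the difficulty to sit in the local-complementation bookkeeping and the accompanying Ramsey estimates.
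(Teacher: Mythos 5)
Your necessity argument is correct and is essentially the paper's Lemma~\ref{lem:edgeforward1}(iii): the cut-rank of $mH$ is additive over the copies of $H$, each copy meets at least two parts of any partition into parts of size at most $k$, and a random choice of parts splits each copy with probability at least $1/2$. Your reduction of sufficiency to the quantitative statement ``sufficiently large rank $k$-brittleness forces an $mH$ vertex-minor for some connected $H$ on $k+1$ vertices'' (Proposition~\ref{prop:rkbrittleconverse} in the paper) is also correct.

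However, you do not prove that quantitative statement; you only outline a plan, and the plan has a genuine gap exactly where you yourself locate the difficulty. In your inductive step, after obtaining a large $mH'$ vertex-minor with $H'$ connected on $k$ vertices, your dichotomy is: either many copies extend to pairwise nonadjacent connected $(k+1)$-vertex pieces, or one ``packages the copies of $H'$ together with their attachment to the rest of $G$ into parts of size at most $k$ and uses submodularity of the cut-rank function to bound all unions of these parts, contradicting that $\rkbrit_k(G)$ is large.'' The second branch does not work as stated: the copies of $H'$ cover only a bounded portion of $V(G)$, and packaging them into small parts says nothing about how to partition the remaining vertices or why the cut-rank of arbitrary unions would then be bounded. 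The paper's argument in this regime is quite different and is the technical heart of the proof: if the cut-rank of the set carrying $mH'$ is large, Theorem~\ref{thm:largebipartite} together with the $\mat$/$\antimat$/$\tri$ reductions (Lemmas~\ref{lem:tomatching}, \ref{lem:lengthonecase}, \ref{lem:largerankmatching}, \ref{lem:largerankantimatching}) produces $nH$ directly; if it is small, a row-twins argument (Lemma~\ref{lem:rowtwins}) and a disentangling lemma (Lemma~\ref{lem:sameneighbors}) extract one pendant connected $(k+1)$-vertex piece attached to the rest of the graph only through a single vertex and a small set $T$; this extraction is iterated to build a long sequence of such pendant pieces, and only then does a Ramsey argument on the adjacencies between the pieces yield either a long induced path or the desired disjoint union. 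None of this (nor the $k=1$ base case beyond a one-sentence gesture at the bipartite Ramsey theorem) is carried out in your proposal, so the sufficiency direction remains unproved.
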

	
        There are lots of interesting open problems on vertex-minors.
	In particular,
        the conjecture of Oum~\cite{Oum2006a}, if true,
        implies that for every circle graph $H$,
        every graph $G$ with sufficiently large rank-width
        has a vertex-minor isomorphic to $H$.
	This statement was known to be true
        when $G$ is a bipartite graph, a circle graph, or a line graph~\cite{Oum2004,Oum2006a}.
        Very recently, Geelen, Kwon, McCarty, and Wollan~\cite{GKMW2019}
        announced that they have a proof of this statement. Their proof uses our Theorem~\ref{thm:main4}
        as a starting point.
        
        Kant\'e and Kwon~\cite{KK2015} proposed
        the following analogous conjecture for linear rank-width.
        \begin{conjecture}[Kant\'e and Kwon~\cite{KK2015}]\label{conj:lrw}
          For every fixed forest $T$, there is an integer $f(T)$ such that
          every graph of linear rank-width at least $f(T)$ contains
          a vertex-minor isomorphic to $T$.
        \end{conjecture}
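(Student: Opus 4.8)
The plan is to prove the contrapositive: for each forest $T$, the vertex-minor ideal $\cC_T$ of graphs with no vertex-minor isomorphic to $T$ has bounded linear rank-width. First I would reduce to the case that $T$ is a tree. Given a forest $T$ with components $T_1,\dots,T_c$, pick a vertex $r_i$ in each $T_i$, add a new vertex $z$, and join $z$ to each $r_i$ by a private path of length two; the resulting graph $T'$ is a tree, and restricting it to $V(T)$ recovers $T$ exactly, so $T$ is an induced subgraph, hence a vertex-minor, of every graph having $T'$ as a vertex-minor. Thus $\cC_{T'}\subseteq\cC_T$, and it suffices to bound the linear rank-width of $\cC_T$ when $T$ is a tree. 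Next I would split according to rank-width. If $\cC_T$ contained graphs of arbitrarily large rank-width, then by the theorem of Geelen, Kwon, McCarty, and Wollan~\cite{GKMW2019} — a graph of sufficiently large rank-width contains every fixed circle graph as a vertex-minor — together with the fact that every tree (indeed every forest) is a circle graph, such a graph would contain $T$ as a vertex-minor, a contradiction. So it remains to bound the linear rank-width of graphs in $\cC_T$ of rank-width at most some constant $r$.

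For these, I would develop a linear-obstruction theory for bounded rank-width, extending the distance-hereditary analysis of Kant\'e and Kwon~\cite{KK2015}. The target is a family $L_1,L_2,L_3,\dots$ of graphs, each $L_\ell$ of bounded size but of linear rank-width at least $\ell$, such that every graph of rank-width at most $r$ and linear rank-width at least $g(r,\ell)$ contains some $L_\ell$ as a vertex-minor; one would get this by taking a rank-decomposition of width at most $r$, arguing that large linear rank-width forces the decomposition to be path-like with a bounded configuration recurring many times, and pumping out a long ``linear'' substructure by a Ramsey-type argument on the limited boundary behaviour. Granting this, one must then show that every fixed tree $T$ is itself a vertex-minor of $L_\ell$ once $\ell$ is large enough: embed a caterpillar-like skeleton of $T$ along the linear direction of $L_\ell$ and peel off the remaining branches using local complementations, exploiting that $L_\ell$ has linear rank-width tending to infinity and hence enough room for the branching of $T$ — something a single long path cannot provide, since a complete binary tree of large depth already has unbounded linear rank-width and is therefore not a vertex-minor of any path.

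The hard part is exactly this bounded-rank-width regime, and Theorem~\ref{thm:main4} does not reach it: whenever $T$ is a tree that is not a vertex-minor of any path — for instance a sufficiently deep complete binary tree — all of $P_{k+1},2P_{k+1},3P_{k+1},\dots$ lie in $\cC_T$, because a connected graph that is a vertex-minor of a disjoint union of paths must be a vertex-minor of a single path; so $\cC_T$ is not rank $k$-scattered for any $k$ and the brittleness machinery cannot be applied directly. Establishing the linear-obstruction theory for bounded rank-width (already nontrivial for distance-hereditary graphs) and then proving that every tree embeds as a vertex-minor into a sufficiently large member of the obstruction family are where the genuine difficulty lies. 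It is worth recording, however, that the brittleness route does settle the conjecture in the special case $T=mK_{1,n}$, and that here the disconnectedness of $T$ is precisely what makes it work: the class of graphs with no $mK_{1,n}$ vertex-minor is rank $k$-scattered for a suitable $k=k(n)$ by Theorem~\ref{thm:main4}, since every connected graph on at least $k+1$ vertices has $K_{1,n}$ as a vertex-minor (a Ramsey-type lemma), so for any connected $H$ with $k+1$ vertices the graph $mH$ contains $mK_{1,n}$ as a vertex-minor and hence $\{H,2H,3H,\dots\}\not\subseteq\cC_T$; and a rank $k$-scattered class has bounded linear rank-width, because concatenating the parts of a witnessing partition in any order gives a linear layout every prefix of which differs from a union of parts by at most $k$ vertices, so its cut-rank exceeds the bound on unions of parts by at most $k$.
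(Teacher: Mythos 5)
The statement you were asked to prove is Conjecture~\ref{conj:lrw}, which this paper does not prove: it is stated as an open conjecture of Kant\'e and Kwon, and the paper's contribution toward it is only the special case where $T$ is a disjoint union of stars (Theorem~\ref{thm:main5}). Your proposal does not prove it either, and the gap is exactly where you say it is. After the (legitimate) reductions --- forests to trees, and then to bounded rank-width via the announced Geelen--Kwon--McCarty--Wollan theorem plus the fact that trees are circle graphs --- everything rests on an unproven ``linear-obstruction theory'': the existence of a family $L_1,L_2,\dots$ of bounded-size graphs of unbounded linear rank-width such that every graph of rank-width at most $r$ and huge linear rank-width contains some $L_\ell$ as a vertex-minor, together with the claim that every fixed tree is a vertex-minor of $L_\ell$ for $\ell$ large. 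Both steps are introduced with ``Granting this'' and a sketch of how one might pump a path-like decomposition; neither is established, and they are precisely the open content of the conjecture (known only for rank-width $1$, i.e.\ distance-hereditary graphs, in the Kant\'e--Kwon paper you cite). So the proposal is a research plan, not a proof, as you yourself acknowledge.

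What you do get right is worth noting. Your diagnosis that the brittleness machinery of Theorem~\ref{thm:main4} cannot reach a tree $T$ that is not a vertex-minor of a path (e.g.\ a deep complete binary tree) is correct: all of $P_{k+1},2P_{k+1},3P_{k+1},\dots$ avoid such a $T$ as a vertex-minor, so the class is not rank $k$-scattered for any $k$. And your closing paragraph essentially reconstructs the paper's actual route to Theorem~\ref{thm:main5}: every sufficiently large connected graph has a $K_{1,n}$ vertex-minor (the paper uses Proposition~\ref{prop:binary} together with local equivalences of $K_{n+1}$ and of $\S_n\tri\S_n$ rather than a bare Ramsey argument), hence $mH\notin\cC$ for every connected $H$ on $k+1$ vertices, Theorem~\ref{thm:main4} then gives rank $k$-scatteredness, and bounded linear rank-width follows from concatenating the parts of a witnessing partition --- the paper's Proposition~\ref{prop:inequality}, which sharpens your additive error from $k$ to $\lfloor k/2\rfloor$ via submodularity of the rank function. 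But this settles only the star-forest case, not the conjecture as stated.
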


        By Ramsey's theorem,
        every sufficiently large connected graph contains one of $K_{1,n}$, $K_n$, or $P_n$ as an induced subgraph
	and if $n$ is huge, then
        each of these graphs contains a large star graph as a vertex-minor.
        Therefore for each fixed $n$, each component of a graph having no $K_{1,n}$ vertex-minor has bounded number of vertices 
	and thus it has bounded linear rank-width. Thus, Conjecture~\ref{conj:lrw}
        is true when $T$ is a star.

	We can strengthen this observation using Theorem~\ref{thm:main4}
        and verify Conjecture~\ref{conj:lrw} when $T$ is the disjoint union of stars.
	
	\begin{theorem}\label{thm:main5}
	For positive integers $m$ and $n$,  
	the class of graphs having no vertex-minor isomorphic to $mK_{1,n}$ has bounded linear rank-width.
        \end{theorem}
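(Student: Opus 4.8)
The plan is to combine Theorem~\ref{thm:main4} with the known relationship between rank $k$-brittleness and linear rank-width, plus a vertex-minor reduction argument relating $mK_{1,n}$ to the connected graphs on $k+1$ vertices for a suitable $k$. First I would fix $m$ and $n$ and let $\cC$ be the class of graphs with no vertex-minor isomorphic to $mK_{1,n}$. Since $\cC$ is clearly a vertex-minor ideal, Theorem~\ref{thm:main4} is available: to conclude that $\cC$ is rank $k$-scattered for an appropriate $k=k(m,n)$, it suffices to exhibit, for every connected graph $H$ on $k+1$ vertices, an integer $t$ with $tH \notin \cC$, i.e.\ $tH$ has $mK_{1,n}$ as a vertex-minor. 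The natural choice is to take $k+1$ large enough (in terms of $n$ alone) that every connected graph $H$ on $k+1$ vertices already contains $K_{1,n}$ as a vertex-minor; by the Ramsey argument sketched in the paragraph preceding Conjecture~\ref{conj:lrw}, every sufficiently large connected graph contains one of $K_{1,N}$, $K_N$, or $P_N$ as an induced subgraph, and each of these has $K_{1,n}$ as a vertex-minor once $N$ is large. Then $mH$ contains $m$ disjoint copies of $K_{1,n}$ as a vertex-minor (vertex-minor operations localize to components), so $mH \notin \cC$, and with $t=m$ we are done: $\cC$ is rank $k$-scattered for this fixed $k$.

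The second half is to upgrade ``rank $k$-scattered'' to ``bounded linear rank-width.'' By definition, rank $k$-scattered means there is a constant $\ell$ such that every $G\in\cC$ admits a partition $(X_1,\ldots,X_r)$ of $V(G)$ into parts of size at most $k$ whose $\cutrk_G$-width is at most $\ell$; that is, $\cutrk_G(\bigcup_{i\in I}X_i)\le \ell$ for every $I\subseteq\{1,\ldots,r\}$. I would build a linear layout (a linear order, or equivalently a caterpillar-like decomposition) of $V(G)$ by first ordering the parts $X_1,\ldots,X_r$ arbitrarily and then listing the vertices within each part in any order. For any ``prefix'' of this linear order that ends exactly at a part boundary, the corresponding cut is $\bigcup_{i\le j}X_i$ for some $j$, which has cut-rank at most $\ell$. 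For a prefix that splits some part $X_j$ in the middle, the cut differs from $\bigcup_{i<j}X_i$ by at most $\abs{X_j}\le k$ vertices, and moving a single vertex across a cut changes the cut-rank by at most $1$; hence every cut in this linear layout has cut-rank at most $\ell+k$. Therefore the linear rank-width of $G$ is at most $\ell+k$, a bound independent of $G\in\cC$, which is exactly the conclusion.

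The main obstacle I anticipate is getting the quantitative Ramsey step clean: one must choose $k+1$ as an explicit function of $n$ so that \emph{every} connected graph on $k+1$ vertices — not merely graphs of large rank-width — contains $K_{1,n}$ as a vertex-minor, and verify that each of $K_{1,N}$, $K_N$, $P_N$ (and hence any connected induced subgraph containing one of them) does so. The complete graph and long path cases require a short explicit argument: local complementation at an endpoint of a path produces a triangle together with a pendant path, and iterating yields a star, while a clique $K_N$ becomes a star after one local complementation at any vertex followed by deleting the now-clique among the neighbors down to independence. None of this is deep, but it is the only place where actual vertex-minor manipulation is needed; the rest is bookkeeping with the definition of brittleness and the elementary fact that cut-rank changes by at most $1$ under moving one vertex.
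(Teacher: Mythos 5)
Your proposal is correct and follows essentially the same route as the paper: choose $k$ so that every connected graph on $k+1$ vertices has $K_{1,n}$ as a vertex-minor (via the Ramsey/long-path/clique analysis), apply Theorem~\ref{thm:main4} to conclude $\cC$ is rank $k$-scattered, and then convert the bounded rank $k$-brittleness into bounded linear rank-width by concatenating the parts of the partition. The only difference is quantitative and immaterial: the paper's Proposition~\ref{prop:inequality} uses submodularity to bound the width of the concatenated layout by $\ell+\lfloor k/2\rfloor$, whereas your one-vertex-at-a-time argument gives $\ell+k$, which suffices.
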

        Dahlberg, Helsen, and Wehner~\cite{DHW2019b} showed that it is NP-complete to decide whether a graph $G$ contains a vertex-minor isomorphic to a graph $H$, even if both $H$ and $G$ are restricted to circle graphs. However, we do not know the complexity of deciding whether a graph contains a vertex-minor isomorphic to a fixed graph $H$.
By Theorem~\ref{thm:main5}, we can recognize whether a graph contains
a vertex-minor isomorphic to the fixed disjoint
union of stars and complete graphs
in polynomial time. This works as follows. By Theorem~\ref{thm:main5},
if the input graph has large linear rank-width, then trivially
it has a vertex-minor isomorphic to $mK_{1,n}$ for some large $m$ and $n$
where $mK_{1,n}$ contains the disjoint union of stars and complete graphs as a vertex-minor.
Otherwise,
the input graph has bounded rank-width and so the theorem of Courcelle and Oum~\cite{CO2004} provides a polynomial-time algorithm.

	This paper is organized as follows. In Section~\ref{sec:prelim}, we
        present necessary definitions and notations.
        Section~\ref{sec:vertexbrittle} proves Theorem~\ref{thm:main1} for
        vertex $k$-scattered subgraph ideals, 
        Section~\ref{sec:edgebrittle} proves Theorem~\ref{thm:main2} for
        edge $k$-scattered subgraph ideals, 
        Section~\ref{sec:matchingbrittle} proves Theorem~\ref{thm:main3} for
        matching $k$-scattered subgraph ideals,
        and 
        Section~\ref{sec:rankbrittle} proves Theorem~\ref{thm:main4} for
        rank $k$-scattered vertex-minor ideals.
        Section~\ref{sec:parameter} compares our concepts with various other graph parameters.
        Section~\ref{sec:appl} discusses the application of Theorem~\ref{thm:main4}
        for linear rank-width, proving Theorem~\ref{thm:main5}.

	\section{Preliminaries}\label{sec:prelim}

	For a graph $G$ and a vertex set $S$ of $G$, we write $G[S]$ to denote the subgraph of $G$ induced by $S$. 	
	For $v\in V(G)$ and $S\subseteq V(G)$, $G- v$ is the graph obtained from $G$ by removing $v$ and all edges incident with $v$, and 
	$G- S$ is the graph obtained by removing all vertices in $S$. 
	For $F\subseteq E(G)$, $G-F$ is the subgraph of $G$ with the vertex set $V(G)$ and the edge set $E(G) \setminus F$.  
	For a vertex $v$ of a graph $G$, $N_G(v)$ is the set of \emph{neighbors} of $v$ in $G$, and the \emph{degree} of $v$ is the number of edges incident with $v$.
	For two disjoint vertex subsets $A$ and $B$ of $G$, 
	we write $G[A,B]$ to denote the bipartite subgraph on the bipartition $(A,B)$ consisting of all edges of $G$ having one end in $A$ and the other end in $B$.
        For two graphs $G$ and $H$, let $G\cup H$ be the graph $(V(G)\cup V(H), E(G)\cup E(H))$.

        A \emph{matching} of a graph is a set of edges of which no two edges share an end. For a matching $M$, we write $V(M)$ to denote the set of all vertices
        incident with an edge in $M$.
	A \emph{clique} in a graph is a set of pairwise adjacent vertices, 
	and an \emph{independent set} in a graph is a set of pairwise non-adjacent vertices. 
	
        The \emph{adjacency matrix} of a graph $G=(V,E)$, denoted by $A(G)$, 
        is a $V\times V$ $0$-$1$ matrix whose $(v,w)$ entry is $1$ if and only if $v$ and $w$ are adjacent.

	We write 
$P_n$ and $K_n$ to denote a path on $n$ vertices and a complete graph on $n$ vertices respectively.
	We write $K_{m,n}$ to denote a complete bipartite graph with bipartition $(A,B)$ where $\abs{A}=m$ and $\abs{B}=n$.
	For a graph $G$, we denote by $\overline{G}$ the \emph{complement} of $G$.

	We write $R(n;k)$ to denote the minimum number $N$ such
	that every coloring of the edges of $K_N$ into $k$ colors induces a monochromatic complete subgraph on $n$ vertices. Ramsey's theorem implies that $R(n;k)$ exists.

\paragraph{Vertex-minors}
	For a vertex $v$ in a graph $G$, performing a \emph{local complementation} at $v$ is to replace the subgraph of $G$ induced on $N_G(v)$ 
	by its complement graph. 	
	We write $G*v$ to denote the graph obtained from $G$ by applying
	a local complementation at $v$.
	Two graphs $G$ and $H$ are \emph{locally equivalent} if $G$ can be obtained from $H$ by a sequence of local complementations.
	A graph $H$ is a \emph{vertex-minor} of a graph $G$
	if $H$ is an induced subgraph of a graph locally equivalent to $G$.

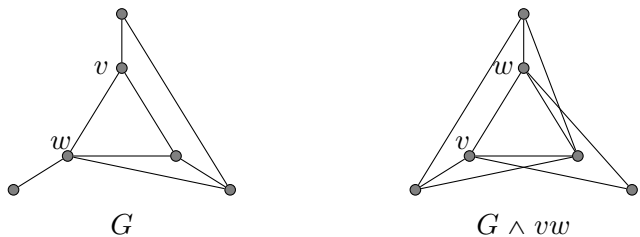
\begin{figure}
   \tikzstyle{v}=[circle,draw,fill=black!50,inner sep=0pt,minimum width=4pt]
  \centering
     \begin{tikzpicture}[scale=0.9]
    \node[v](v1) at (0,.8){};
    \node[v](v2) at (0,1.6){};
    \node[v](v3) at (-.8,-.5){};
    \node[v](v4) at (.8,-.5){};
    \node[v](v5) at (-1.6,-1){};
    \node[v](v6) at (1.6,-1){};
      \draw (v1)--(v3)--(v4)--(v1);
      \draw (v1)--(v2);
      \draw (v3)--(v6);
      \draw (v2)--(v6);
      \draw (v3)--(v5);
      \draw (v4)--(v6);
      \draw (-0.3,.8) node{$v$};
      \draw (-0.6-0.3,-.3) node{$w$};
      \draw (0,-1.5) node{$G$};
    \end{tikzpicture}\qquad\qquad\qquad
         \begin{tikzpicture}[scale=0.9]
    \node[v](v1) at (0,.8){};
    \node[v](v2) at (0,1.6){};
    \node[v](v3) at (-.8,-.5){};
    \node[v](v4) at (.8,-.5){};
    \node[v](v5) at (-1.6,-1){};
    \node[v](v6) at (1.6,-1){};
      \draw (v1)--(v3)--(v4)--(v1);
      \draw (v1)--(v2);
      \draw (v3)--(v5);
      \draw (v1)--(v6);
      \draw (v3)--(v6);
      
      \draw (v2)--(v4);
      \draw (v5)--(v4);
      \draw (v2)--(v5);
      \draw (-0.3,.8) node{$w$};
      \draw (-0.6-0.3,-.3) node{$v$};
      \draw (0,-1.5) node{$G\wedge vw$};
    \end{tikzpicture}\caption{An example of pivoting.}
  \label{fig:pivotex}
\end{figure}

	For an edge $uv$ of a graph $G$, \emph{pivoting} the edge $uv$ in $G$
        is to take a series of three local complementations at $u$, $v$, and $u$.
        We write $G\pivot uv$ to denote the graph obtained by pivoting $uv$. 
        In other words, $G \pivot uv=G*u*v*u$.
	Note that $G\pivot uv$ is identical to the graph obtained from $G$ by flipping the adjacency relation between every pair of vertices $x$ and $y$ 
	where $x$ and $y$ are contained in distinct sets of $N_G(u)\setminus (N_G(v)\cup \{v\})$, $N_G(v)\setminus (N_G(u)\cup \{u\})$, and $N_G(u)\cap N_G(v)$, and finally swapping the labels of $u$ and $v$~\cite{Oum2004}.
	To \emph{flip} the adjacency relation between two vertices, we delete the edge if it exists and add it otherwise. 
	See Figure~\ref{fig:pivotex} for an example.
        For more details, see~\cite{Oum2004}.

\paragraph{Graph operations}
For two graphs $G$ and $H$ on disjoint vertex sets, each having $n$ vertices, we would like
to introduce operations to construct  graphs on $2n$ vertices by making the disjoint
union of them and adding some edges between two graphs. 
Roughly speaking, $G\mat H$ will add a perfect matching,
 $G\antimat H$ will add the complement of a perfect matching, 
 and $G\tri H$ will add a bipartite chain graph.
Formally, 
for two $n$-vertex graphs $G$ and $H$ with fixed ordering on the vertex sets $\{v_1,v_2,\ldots,v_n\}$ and $\{w_1, w_2, \ldots, w_n\}$ respectively, 
let 
$G\mat H$, $G\antimat H$, $G\tri H$
be graphs on the vertex set $V(G)\cup V(H)$ 
whose subgraph induced by $V(G)$ or $V(H)$
is $G$ or $H$, respectively
such that
 for all $i,j\in \{1,2,\ldots,n\}$, 
\begin{enumerate}[(i)]
\item 
$v_iw_j\in E(G\mat H)$ if and only if $i=j$,
\item 
$v_iw_j\in E(G\antimat H)$ if and only if $i\neq j$,
\item 
$v_iw_j\in E(G\tri H)$ if and only if $i\ge j$.
\end{enumerate}
See Figure~\ref{fig:construction} for illustrations of $\K_5\mat\S_5$, $\K_5\antimat\S_5$, and $\K_5\tri \S_5$.
In each of the constructed graphs, we say that $v_i$ is \emph{matched with} $w_j$ when $i=j$.

\begin{figure}
 \tikzstyle{v}=[circle, draw, solid, fill=black, inner sep=0pt, minimum width=3pt]
  \centering
  \newcommand\Sfive[1]{    \begin{tikzpicture}[scale=0.6]
      \foreach \x in {1,...,5} {
        \node [v]  (v\x) at(0,-\x){};
        \node [v]  (w\x) at (2,-\x){};
        \draw (-.5,-\x) node [left] {$v_\x$};
        \draw (w\x) node [right] {$w_\x$};
      }
      \draw (v1)--(v5);
      \draw(v1) [in=120,out=-120] to (v3);
      \draw(v2) [in=120,out=-120] to (v4);
      \draw(v3) [in=120,out=-120] to (v5);
      \draw(v1) [in=120,out=-120] to (v4);
      \draw(v2) [in=120,out=-120] to (v5);
      \draw(v1) [in=120,out=-120] to (v5);
      \foreach \x in {1,...,5} 
      \foreach \y in {1,...,5} {
        #1
      }
    \end{tikzpicture}
    }
    \Sfive{       \ifnum \x=\y  \draw (v\x)--(w\y);       \fi      }
    $\qquad$
    \Sfive{       \ifnum \x=\y  \else      \draw (v\x)--(w\y);       \fi      }
    $\qquad$
    \Sfive{       \ifnum \x>\y   \draw (v\x)--(w\y);       
    \else \ifnum\x=\y \draw (v\x)--(w\y);\fi\fi}
  \caption{$\K_5\mat \S_5$, $\K_5\antimat \S_5$, and $\K_5\tri \S_5$.}
  \label{fig:construction}
\end{figure}

	\section{Vertex $k$-scattered subgraph ideals}\label{sec:vertexbrittle}

	In this section, we characterize vertex $k$-scattered subgraph ideals.

	\begin{thm:main1}
	Let $k$ be a positive integer. A subgraph ideal $\cC$ is vertex $k$-scattered if and only if 
          \[
            \{1H/A, 2H/A, 3H/A, 4H/A, \ldots\}\not\subseteq \cC
          \]
          for every connected graph $H$ with exactly $k+1$ edges
          and each of its independent set $A\subsetneq V(H)$
          such that $H- A$ is connected.
	\end{thm:main1}
	
	For the forward part, we show the following.

        \begin{lemma}\label{lem:etabase1}
          Let $k$, $\ell$ be positive integers.
          Let $H$ be a connected graph with exactly $k+1$ edges.
          If $A$ is an independent set of $H$
          such that $H-A$ is connected,
          then the vertex $k$-brittleness of $(2\ell+1)H/A$ is at least $\ell+1$.
	\end{lemma}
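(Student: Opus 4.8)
The plan is to take an arbitrary partition of the edge set, locate for each copy of $H$ one \emph{private} vertex that certifies a large cut, and then combine these certificates by an averaging argument. Write $G=(2\ell+1)H/A$, set $m=2\ell+1$, and let $H_1,\dots,H_m$ be the $m$ copies of $H$ inside $G$; these copies pairwise share exactly the vertices of $A$, their edge sets are pairwise disjoint, and together they partition $E(G)$ (the identification is applied only to vertices, not edges). Fix an arbitrary partition $\mathcal P=(X_1,\dots,X_p)$ of $E(G)$ with $\abs{X_j}\le k$ for all $j$; it is enough to find a set $J\subseteq\{1,\dots,p\}$ with $\vc_G\bigl(\bigcup_{j\in J}X_j\bigr)\ge \ell+1$, since then the $\vc_G$-width of $\mathcal P$ is at least $\ell+1$, and $\mathcal P$ is an arbitrary admissible partition.

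The core step is the claim that for each $i$ there are a vertex $v_i\in V(H_i)\setminus A$ and distinct indices $a_i,b_i$ such that both $X_{a_i}$ and $X_{b_i}$ contain an edge of $E(H_i)$ incident with $v_i$. Since $\abs{E(H_i)}=k+1>k$, the set $E(H_i)$ is not contained in a single part, so at least two parts meet $E(H_i)$. A private vertex $u\in V(H_i)\setminus A$ has all of its $G$-edges inside $H_i$, and since $A$ is independent every edge of $H_i$ has at least one private endpoint. Suppose, for contradiction, that every private vertex of $H_i$ meets only one part. If $\abs{V(H)\setminus A}\ge 2$, then $H_i-A$ is connected with minimum degree at least $1$, so every private vertex lies on an edge of $H_i$ with both endpoints private; each such edge forces its two endpoints into one common part, and connectedness of $H_i-A$ then places all private vertices of $H_i$ into one part $X_c$, so every edge of $H_i$ (having a private endpoint) lies in $X_c$, contradicting that two parts meet $E(H_i)$. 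If $\abs{V(H)\setminus A}=1$, say $V(H)\setminus A=\{v^*\}$, then every edge of $H$ is incident with $v^*$, so $H=K_{1,k+1}$ and $v^*_i$ is the unique private vertex of $H_i$, incident with all $k+1$ edges of $H_i$; these meet two parts, contradicting the supposition. This proves the claim, and since the $v_i$ are private to distinct copies, $v_1,\dots,v_m$ are $m$ distinct vertices of $G$.

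To finish, choose $J\subseteq\{1,\dots,p\}$ at random, including each index independently with probability $\tfrac12$. For each $i$ the probability that exactly one of $a_i,b_i$ lies in $J$ is $\tfrac12$, so the expected number of such indices equals $m/2=\ell+\tfrac12$; hence some $J$ makes at least $\ell+1$ indices good. For this $J$, put $F=\bigcup_{j\in J}X_j$: if $i$ is good then exactly one of $X_{a_i},X_{b_i}$ is contained in $F$, and since the parts are pairwise disjoint an edge of the other part lies outside $F$, so $v_i$ is incident with an edge of $F$ and an edge of $E(G)\setminus F$ and thus contributes to $\vc_G(F)$. Distinctness of the $v_i$ gives $\vc_G(F)\ge \ell+1$, so the $\vc_G$-width of $\mathcal P$ is at least $\ell+1$. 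As $\mathcal P$ was arbitrary, the vertex $k$-brittleness of $(2\ell+1)H/A$ is at least $\ell+1$.

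I expect the claim in the second paragraph to be the main obstacle: it is exactly where the hypotheses that $A$ is independent and that $H-A$ is connected are used, and one must be careful to guarantee that the boundary witness $v_i$ can always be chosen \emph{outside} $A$, since otherwise witnesses of different copies could collide and the final counting would break down. The concluding step is a routine max-cut-type averaging argument.
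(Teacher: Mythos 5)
Your proof is correct and follows essentially the same route as the paper's: a random subset of the parts, the observation that each copy of $H$ meets at least two parts because it has $k+1>k$ edges, and the expectation bound $(2\ell+1)/2>\ell$ to extract $\ell+1$ copies whose boundary witnesses are distinct vertices outside $A$. The only cosmetic difference is that you pin down a single witness vertex $v_i$ and two parts per copy \emph{before} the random choice, whereas the paper chooses the random subset first and then finds a boundary vertex in each affected component via the connectivity of $H-A$; both arguments are sound.
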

        \begin{proof}
          Suppose not. Let $G=(2\ell+1)H/A$.
          Let $(X_1,X_2,\ldots,X_t)$ be a partition of $E(G)$
          such that its $\kappa_G$-width is at most $\ell$
          and $\abs{X_i}\le k$ for all $1\le i\le t$.

          For a component $C$ of $G-A$,
          let \[
            Y_C=\{ i\in\{1,\ldots,t\}:
            \text{some vertex in $V(C)$ is incident with an edge in $X_i$}\}.\]
          For each component $C$ of $G-A$,
          $\abs{Y_C}\ge 2$
          because $\abs{X_1},\abs{X_2},\ldots,\abs{X_t}\le k$
          and 
          vertices in $C$ are incident with more than $k$ edges in total.
          
          Let us pick a random subset $I$ of $\{1,2,\ldots,t\}$.
          For each component $C$ of $G-A$,
          the probability that $ Y_C\cap I\neq \emptyset$ and
          $Y_C\setminus I\neq\emptyset$ is $1-2^{1-\abs{Y_C}}\ge 1/2$.
          By the linearity of the expectation, there exists a subset $I'$ of $\{1,2,\ldots,t\}$
          such that
          at least $\ell+1$ components $C$ of $G-A$
          satisfy $Y_C\cap I'\neq \emptyset$ and
          $Y_C\setminus I'\neq \emptyset$.
          If $ Y_C\cap I'\neq\emptyset$ and $ Y_C\setminus I'\neq\emptyset$ for some component $C$,
          then $V(C)$ has a vertex incident with both an edge in $\bigcup_{i\in I'}X_i$ and an edge in $\bigcup_{i\notin I'}X_i$, because $C$ is connected.
          
          This means that $\kappa_G(\bigcup_{i\in I'} X_i) \ge \ell+1$, contradicting
          our assumption.
        \end{proof}

	For the converse direction of Theorem~\ref{thm:main1}, 
	we prove that 
	for positive integers $k$ and $n$, 
	every graph with sufficiently large vertex $k$-brittleness
        must contain a subgraph
        isomorphic to $nH/A$
        for some connected graph $H$ with $k+1$ edges
        and some independent set $A\subsetneq V(H)$
        such that $H-A$ is connected.
	We prove this statement by induction on $k$.
	The following lemma will be used in the induction step.
	\begin{lemma}\label{lemma:manyincidence}
          Let $H$ be a connected graph with exactly $k$ edges
          and let $A\subsetneq V(H)$ be an independent set
          such that $H-A$ is connected.
          Let $m$, $n$ be positive integers such that
          $m\ge 4(k+1)^2 n^2$.
          Let $G$ be a graph containing $mH/A$ as a subgraph.
          If           for each component $C$ of $(mH/A)-A$,
          $G$ has an edge not in $E(mH/A)$ but incident with vertices in $C$,
          then
          $G$ contains a subgraph isomorphic to $nH'/A'$
          for some connected graph $H'$ with $k+1$ edges
          and an independent set $A'\subsetneq V(H')$
          such that $H'-A'$ is connected.

	\end{lemma}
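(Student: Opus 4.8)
The plan is to work inside the structure $mH/A$ together with the extra edges of $G$ and extract, by a Ramsey-type pigeonholing argument, a new ``template'' graph $H'$ with one more edge. Write $A = \{a_1, \ldots, a_p\}$, and for a component $C$ of $(mH/A) - A$ note that $C$ is one of the $m$ copies of $H - A$, attached to the $a_i$'s via the fixed pattern coming from $H$. By hypothesis, for each of the $m$ copies $C$ there is an edge $e_C \in E(G) \setminus E(mH/A)$ incident with a vertex of $C$. First I would classify each copy $C$ by the ``type'' of its extra edge $e_C$: which vertex $v$ of the abstract graph $H - A$ the endpoint of $e_C$ in $C$ corresponds to, and where the other endpoint of $e_C$ lies --- either in $A$ (and if so, which vertex $a_i$), or in another copy $C'$ of $H-A$ (and if so, which vertex $v'$ of $H-A$ it corresponds to), or as a genuinely new vertex outside $V(mH/A)$. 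Since $H$ has a bounded number of vertices, there are only boundedly many such types, so by pigeonhole a $\Theta(m/(\text{poly}(k)))$-sized subfamily $\mathcal{F}$ of copies all share the same type; since $m \ge 4(k+1)^2 n^2$, we can ensure $|\mathcal{F}| \ge n$ (with room to spare for the case analysis below).

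Next I would case on the shared type of $\mathcal{F}$ and build $H'$ accordingly. If the extra edge of each $C \in \mathcal{F}$ goes to a fixed vertex $a_i \in A$: then $H' $ is obtained from $H$ by adding the edge from the relevant vertex $v$ (in $H-A$) to $a_i$ if it is not already present; keep $A' = A$, which is still independent since we only added an edge incident to $A$ from outside, and $H' - A' = H - A$ is connected; then $G$ contains $|\mathcal{F}|H'/A' \supseteq nH'/A'$. If the extra edge goes from $v \in C$ to a new vertex outside $V(mH/A)$: here the copies might a priori share the new endpoint or not, so I would split once more --- if at least $n$ copies in $\mathcal{F}$ have edges to pairwise-distinct new vertices, then $H'$ is $H$ plus a pendant vertex $v'$ adjacent to $v$, with $A' = A$ (still a proper independent subset, and $H' - A'$ is $(H-A)$ plus the pendant, connected); if instead many copies point to a common new vertex $u$, then $u$ plays the role of a new apex vertex adjacent to the copy of $v$ in each copy, so take $H'$ = $H$ together with a new vertex $v'$ adjacent to $v$, but now set $A' = A \cup \{v'\}$ (an independent set, since $v'$'s only neighbour is $v \notin A'$, and it is proper in $V(H')$ as long as $H'-A'$ is nonempty --- which it is, being $H - A$) and identify the $v'$-copies, giving $nH'/A' \subseteq G$. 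The remaining case, where the extra edge of each $C$ goes to a vertex $v'$ of another copy $C'$, is handled by pairing up the copies in $\mathcal{F}$ (a matching among the copies induced by the ``points to'' relation, or after further pigeonholing a structured sub-pattern) and letting $H'$ be two disjoint copies of $H$ joined by one edge between the $v$-vertex and the $v'$-vertex; then $H' - A'$ with $A' = A$ copied appropriately needs care --- actually here $H'$ has $2(k+1)$ edges, which is more than $k+1$, so instead I would simply take $H'$ to be a single copy of $H$ with the edge $vv'$ added where $v'$ is reinterpreted: since $C'$ is itself a copy of $H-A$, the subgraph $C \cup C' \cup \{e_C\}$ contains a connected subgraph using all $k$ edges of $C$ plus $e_C$, so one can take $H'$ = (that $(k+1)$-edge connected subgraph) and $A' = A \cap V(H')$, provided enough disjoint such subgraphs exist --- which holds after pigeonholing so that the ``points to'' relation restricted to $\mathcal{F}$ has a large sub-collection of vertex-disjoint pairs.

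The main obstacle I anticipate is precisely this last case: when the extra edges link distinct copies to each other, one must argue that a large collection of \emph{vertex-disjoint} augmented copies survives, and that the resulting $(k+1)$-edge graph $H'$ with its independent set $A'$ still satisfies ``$H' - A'$ connected.'' This requires care because the edge $e_C$ attaches $C$ to another copy $C'$ which may itself be serving as the $C'$ for yet another copy, so the ``points to'' functional digraph on $\mathcal{F}$ has to be analyzed (each vertex out-degree $1$, so it decomposes into ``rho-shaped'' components with bounded in-degree after one more pigeonhole step on the in-degrees); extracting $\Omega(|\mathcal{F}|)$ disjoint pairs from it, and making sure $A'$ is chosen so it remains a \emph{proper} independent subset with connected complement, is the delicate bookkeeping. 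The factor $4(k+1)^2 n^2$ in the hypothesis is exactly the slack needed to absorb the constant-many pigeonhole losses (one for the type, at most one more for the new-vertex/common-vertex split, and one for extracting disjoint pairs, each costing a factor roughly $k+1$) and still be left with $\ge n$ usable copies.
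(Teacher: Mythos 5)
Your overall architecture matches the paper's: classify each copy by where its extra edge goes, pigeonhole, and in each case build $H'$ by either adding an edge inside $V(H)$, adding a pendant vertex placed into $A'$ (shared apex), or adding a pendant vertex kept outside $A'$. The target pairs $(H',A')$ you propose in each branch are exactly the right ones. Two things are missing, one minor and one substantive. The minor one: your type enumeration omits the case where $e_C$ has \emph{both} endpoints in the same copy $C$ (a chord of the copy); this is handled the same way as the edge-to-$A$ case, by adding the edge $xy$ to $H$ with $A'=A$, and the paper folds these two cases together via a single pigeonhole over the $\binom{k+1}{2}$ pairs of vertices of $H$.

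The substantive gap is in the case you yourself flag as delicate: extra edges whose other endpoint is a vertex of $mH/A$ outside $A\cup V(C)$, or a new vertex shared among several copies. Here you need $n$ augmented copies whose parts outside $A'$ are pairwise vertex-disjoint, and your proposed mechanism does not deliver this. The ``points to'' digraph on copies does \emph{not} have bounded in-degree (a single copy, or a single external vertex, can be the target of arbitrarily many extra edges), so ``decomposes into rho-shaped components with bounded in-degree after one more pigeonhole step'' is not available as stated; the high-in-degree situation has to be split off explicitly as the apex case ($A'=A\cup\{v'\}$), and the low-degree situation needs a concrete device to extract many \emph{crossing, pairwise disjoint} copy--pendant pairs. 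The paper's solution is worth noting: it first replaces the arbitrary family $\{e_C\}$ by a \emph{minimal} witness subgraph $G'$, which is automatically a forest with at most one edge between any two copies and with $A\cup(V(G)\setminus V(mH/A))$ independent; after discarding the within-copy and to-$A$ edges, either some vertex has degree more than $(k+1)(n-1)$ (the apex case) or K\H{o}nig's edge-colouring theorem yields a matching $M$ of size at least $m'/(2(k+1)(n-1))$; finally a random bipartition of the set of copies produces $M'\subseteq M$ with $\abs{M'}>(k+1)(n-1)$ all of whose edges leave the chosen side, so the $n$ pendant endpoints obtained after one last pigeonhole are automatically distinct from each other and from every chosen copy. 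Without some replacement for this minimality-plus-matching-plus-random-bipartition step, your argument does not establish the disjointness of the $n$ copies of $H'-A'$, and that is precisely where the bound $4(k+1)^2n^2$ is consumed.
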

	\begin{proof}
          It is trivial if $n=1$. Thus we may assume that $n>1$.
          Let us choose a minimal subgraph $G'$ of $G$
          such that $V(G)=V(G')$,  $E(G')\cap E(mH/A)=\emptyset$
          and 
          for every component $C$ of $(mH/A)-A$,
          there is an edge in $G'$ incident with some vertex of $C$.
          Then $G'$ is a forest
          and $(V(G)\setminus V(mH/A)) \cup A$ is independent in $G'$ by the minimality.
          Moreover, between two components of $(mH/A)-A$, $G'$ has at most one edge
          and for each component $C$ of $(mH/A)-A$,
          the graph $G'[A\cup V(C)]$ has at most one edge.
          Moreover if $G'[A\cup V(C)]$ has an edge, then
          no other edges of $G'$ have exactly one end in $V(C)$.
          Let $m'$ be the number of components $C$ of $(mH/A)-A$
          such that $G'[A\cup V(C)]$ has no edge.

          Let $G''$ be the subgraph of $G'$
          obtained by deleting all edges $e$ having both ends in $V(C)\cup A$
          for some component $C$ of $(mH/A)-A$.
          As one edge of $G'$ is incident with at most two components,
          $G''$ has at least $m'/2$ edges
          and 
          $G'$ has at least $m'/2+(m-m')$ edges.

          If $m-m'>\binom{k+1}{2}(n-1)$,
          then by the pigeon-hole principle,
          there exists a pair of vertices $x$ and $y$ in $H$
          such that at least $n$ isomorphic copies of $H$
          in  $mH/A$
          has the copies $x'$, $y'$ of $x$ and $y$, respectively,
          such that $x'$, $y'$ are adjacent in $G'$.
          Then let $H'$ be the graph obtained from $H$ by adding $xy$.
          Then $G$ has $nH'/A$ as a subgraph.
          So, we may assume that
          $m-m'\le \binom{k+1}{2}(n-1)$.

          Note that vertices in $A$ are isolated in $G''$.
          If a vertex $v$ in $V(mH/A)$ has degree more than $1$ in $G''$,
          then
          no vertex in $G-V(mH/A)$ is adjacent to $v$ in $G''$
          because $G'$ is chosen to be minimal.
          Therefore all neighbors of $v$ in $G''$
          are in distinct components of $(mH/A)-A$.
          Notice that the same holds for a vertex $v$ outside of $mH/A$, because $(V(G)\setminus V(mH/A)) \cup A$ is independent in $G''$.

          If $G''$ has a vertex $v$ of degree more than $(k+1)(n-1)$, 
          then
          more than $(k+1)(n-1)$ components of $(mH/A)-A$ have vertices
          adjacent to $v$ in $G''$.
          By the pigeon-hole principle,
          there exists a vertex $w$ of $H-A$ such
          that in  at least $n$ components of $(mH/A)-A$,
          the copies of $w$ are adjacent to $v$ in $G''$.
          Let $H'$ be the graph obtained from $H$ by adding a new vertex $v$ of degree $1$ adjacent to $w$.
          Let $A'=A\cup \{v\}$.
          Then $G$ has $nH'/A'$ as a subgraph and
          both $H'$ and $H'-A'$ are connected.
          So we may assume that the maximum degree of $G''$ is at most $(k+1)(n-1)$.

          As $G''$ is a forest, $G''$ is bipartite. By K\"onig's theorem on the edge coloring, 
          $G''$ is $(k+1)(n-1)$-edge-colorable.
          So $G''$ has a matching $M$ with
          \[ \abs{M}\ge \frac{\abs{E(G'')}}{(k+1)(n-1)}\ge
            \frac{m'}{2(k+1)(n-1)}.\]

          Suppose that $m'>4(k+1)^2(n-1)^2$. 
          Let $C_1$, $C_2$, $\ldots$, $C_m$ be the components of $(mH/A)-A$.
          Let $I$ be a random subset of $\{1,2,\ldots,m\}$
          and $X=\bigcup_{i\in I} V(C_i)$.
          For each edge $e$ in $M$, 
          the probability that $e$ has exactly one end in $X$ 
          is $1/2$, no matter whether $e$ has one or two ends in $V(mH/A)$.
          Thus,
          there exist $I$ and $M'\subseteq M$ such that
          $\abs{M'}\ge \abs{M}/2>(k+1)(n-1)$
          and every edge of $M'$ has one end in $X$ and the other end not in $X$.
          By the pigeon-hole principle, there exists a vertex $u$ of $H-A$
          such that 
          at least $n$ edges $e$ of $M'$
          are incident with copies of $u$ in $mH/A$.
          Then let $H'$ be the graph obtained from $H$ by adding a new vertex $v$
          and an edge from $v$ to $u$
          and let $A'=A$. Then $G$ has $nH'/A'$ as a subgraph and
          both $H'$ and $H'-A'$ are connected.
          Therefore we may assume that $m'\le 4(k+1)^2(n-1)^2$.

          Then $m=m'+(m-m')\le 4(k+1)^2(n-1)^2+\binom{k+1}{2} (n-1)$.
          As $n-1<2n-1$ and $k/2< 4( k+1)$,
          we deduce that
          $ m< 4(k+1)^2(n-1)^2+ 4(k+1)^2 (2n-1)= 4(k+1)^2 n^2$. This contradicts our assumption on $m$.
        \end{proof}

        \begin{lemma}\label{lem:vbrittleconverse1}
          Every graph with vertex $1$-brittleness at least $256n^4$ contains
          $nP_3/A$ as a subgraph
          for some independent set $A\subsetneq V(P_3)$
          such that $P_3-A$ is connected.
        \end{lemma}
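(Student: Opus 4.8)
The plan is to show that large vertex $1$-brittleness forces many vertices of degree at least~$2$, and then to extract from them either $n$ pairwise vertex-disjoint ``cherries'' or $n$ cherries sharing a common end, or $n$ cherries sharing both ends, thereby producing one of $nP_3$, $nP_3/\{a\}$, $nP_3/\{a,c\}$ as a subgraph. First I would translate the hypothesis: the partition of $E(G)$ into singletons is the only partition of $E(G)$ into parts of size at most~$1$, and its $\kappa_G$-width equals $\max\{\kappa_G(F):F\subseteq E(G)\}$; since every vertex contributing to $\kappa_G(F)$ is incident with at least two edges, the hypothesis yields a set $Z$ of at least $256n^4$ vertices of $G$ of degree at least~$2$. (We may assume $n\ge 2$, since for $n=1$ the statement is immediate once $Z\neq\emptyset$.) For each $v\in Z$ fix two distinct edges $va_v,vb_v$ and call the path $C_v$ on $\{a_v,v,b_v\}$ the cherry at $v$; as $G$ is simple, $a_v,b_v,v$ are pairwise distinct. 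The three target configurations are: (i) $n$ pairwise vertex-disjoint cherries, i.e.\ a copy of $nP_3=nP_3/\emptyset$; (ii) a vertex $w$, distinct vertices $v_1,\dots,v_n$ each adjacent to $w$, and distinct vertices $x_1,\dots,x_n$ with $v_ix_i\in E(G)$, all $2n+1$ vertices distinct, i.e.\ a copy of $nP_3/\{a\}$ for an end $a$ of $P_3$; (iii) two distinct vertices $w,w'$ and distinct vertices $v_1,\dots,v_n$, all $n+2$ distinct, with each $v_i$ adjacent to both $w$ and $w'$, i.e.\ a copy of $K_{2,n}=nP_3/\{a,c\}$ for the two ends $a,c$ of $P_3$. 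In each case $A$ is independent, $A\subsetneq V(P_3)$, and $P_3-A$ is connected, so finding any one of them proves the lemma.

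Next I would carry out the extraction. Choose a maximal $S\subseteq Z$ for which the cherries $\{C_v:v\in S\}$ are pairwise vertex-disjoint. If $\abs S\ge n$ we are in case~(i) and done. Otherwise put $W=\bigcup_{v\in S}C_v$, so $\abs W\le 3(n-1)<3n$, and by maximality $C_v\cap W\neq\emptyset$ for every $v\in Z$. Pigeonholing over $W$ produces a vertex $w\in W$ lying in $C_v$ for at least $256n^4/(3n)\ge 85n^3$ vertices $v\in Z$. At most one of these has $v=w$, so at least $84n^3$ of them have $w$ as an \emph{end} of $C_v$; for each such $v$ let $x_v$ be the other end of $C_v$, so that $v$ is adjacent to $w$, $vx_v\in E(G)$, and $x_v\notin\{v,w\}$. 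Let $T$ be this set of at least $84n^3$ vertices.

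Now I would split on the multiplicities of the $x_v$. If some vertex $w'$ satisfies $x_v=w'$ for at least $n$ vertices $v\in T$, then any $n$ of them together with $w$ and $w'$ give configuration~(iii); here $w'\neq w$ because $x_v\neq w$, and $w,w'$ differ from the chosen $v$'s because $w\neq v$ and $x_v\neq v$. Otherwise each vertex occurs as $x_v$ for at most $n-1$ vertices of $T$, and I would build $v_1,\dots,v_n\in T$ greedily: after choosing $v_1,\dots,v_{i-1}$, the set $U=\{v_1,\dots,v_{i-1}\}\cup\{x_{v_1},\dots,x_{v_{i-1}}\}$ has at most $2(n-1)$ elements, so the number of $v\in T$ with $v\in U$ or $x_v\in U$ is at most $2(n-1)+2(n-1)^2<2n^2\le\abs T$, and hence there is $v_i\in T$ with $v_i\notin U$ and $x_{v_i}\notin U$. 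Setting $x_i=x_{v_i}$ gives configuration~(ii).

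I expect the only delicate point to be the final bookkeeping: one must verify that the $2n+1$ (respectively $n+2$) vertices produced are genuinely pairwise distinct, which is precisely what the relations $x_v\neq v$, $x_v\neq w$, $v\neq w$ and the greedy avoidance of $U$ ensure, and one must note that the conclusion asks only for a \emph{subgraph}, so no nonedges need be checked. The constant $256n^4$ is not optimised; anything exceeding both $2n^2$ and $3n\cdot\abs W$ would do, and $256n^4$ is simply convenient for the inductive step on $k$ in the remainder of the section.
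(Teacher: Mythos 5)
Your proof is correct, but it takes a genuinely different route from the paper's. The paper first discards small components, handles the case of many components, passes to the graph $G'$ obtained by deleting degree-$1$ vertices, and then uses Vizing's theorem to find either a vertex of degree at least $16n^2$ (yielding $mP_2/\{v\}$) or a matching of size at least $16n^2$ (yielding $mP_2/\emptyset$); in either case it finishes by invoking the general growth lemma (Lemma~\ref{lemma:manyincidence}), which extends the copies of $P_2$ to copies of some $P_3/A$. Your argument instead works directly with ``cherries'' at the at least $256n^4$ vertices of degree at least $2$ guaranteed by the hypothesis, takes a maximal vertex-disjoint subfamily, and then pigeonholes and greedily extracts one of the three admissible shapes $nP_3/\emptyset$, $nP_3/\{a\}$, $nP_3/\{a,c\}=K_{2,n}$ explicitly; your bookkeeping of distinctness ($v\neq w$, $x_v\neq v$, $x_v\neq w$, and the greedy avoidance of $U$) is complete, and the quantitative thresholds ($84n^3$ surviving cherries versus the $2n(n-1)$ obstructions in the greedy step) comfortably hold. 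What the paper's route buys is economy: Lemma~\ref{lemma:manyincidence} is needed anyway for the induction on $k$, so the base case becomes a few lines once that machinery exists. What your route buys is self-containment and transparency: it avoids both Lemma~\ref{lemma:manyincidence} and the edge-colouring theorems, and it makes visible exactly which quotients $P_3/A$ can arise. Your closing remark that the constant $256n^4$ is far from tight for this argument is accurate; the paper keeps it because the same bound feeds the recursion defining $\ell(k,n)$.
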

        \begin{proof}
          Let $G$ be a graph with vertex $1$-brittleness at least $256n^4$.
          We may assume that $G$ has no components with at most $2$ vertices.
          If $G$ has at least $n$ components, then
          each component has $P_3$ as a subgraph and therefore
          $nP_3/\emptyset$ is a subgraph of $G$.
          So we may assume that $G$ has less than $n$ components.

          Let $G'$ be the induced subgraph of $G$ obtained by deleting all degree-$1$ vertices.
          Then if a vertex of $G'$ has degree less than $2$,
          then it has its private neighbor in $V(G)\setminus V(G')$ of degree $1$ in $G$.

          If $G'$ has a vertex $v$ of degree at least $16n^2$,
          then $G'$ has $mP_2/\{v\}$ as a subgraph
          where $m$ is the degree of $v$ in $G'$.
          By Lemma~\ref{lemma:manyincidence},
          $G$ contains $nP_3/A$ for some independent set $A\subsetneq V(P_3)$ where $P_3-A$ is connected.
          So we may assume that every vertex of $G'$ has degree less than
          $16n^2$.

          If $G'$ has a matching $M$ of size at least $16n^2$,
          then $G'$ has $mP_2/\emptyset$ as a subgraph
          where $m=\abs{M}$.
          By Lemma~\ref{lemma:manyincidence},
          $G$ contains $nP_3/A$ for some independent set $A\subsetneq V(P_3)$ where $P_3-A$ is connected.
          So we may assume that every matching of $G'$ has less than  $16n^2$ edges.
          
          Then by the theorem of Vizing,
          $G'$ is $16n^2$-edge-colorable and therefore 
          $\abs{E(G')}\le 16n^2(16n^2-1)=256n^4-16n^2$.
          As $G'$ has at most $n-1$ components,
          $\abs{V(G')}\le \abs{E(G')}+n-1< 256n^4$.
          Then the vertex $1$-brittleness of $G$ is less than
          $256n^4$, which is a contradiction.
        \end{proof}

        For a set $A$ of vertices of a graph $G$, 
        a \emph{Tutte bridge} of $A$ in $G$ is
        either an edge joining two vertices in $A$
        or a connected subgraph of $G$ consisting of one component $C$ of $G-A$
        and all edges joining $C$ and $A$.
        Alternatively we may define a Tutte bridge as a connected subgraph of $G$
        induced by an equivalence class on $E(G)$
        where two edges $e$ and $f$ are equivalent if and only if
        there is a path starting with $e$ and ending with $f$ such that
        no internal vertex is in $A$.
        
        For a Tutte bridge $B$ of $A$ in $G$, \emph{deleting} $B$ from $G$
        is to remove all edges in $B$ and remove all vertices in $V(B)\setminus A$.
        Note that every component of $G$ is a Tutte bridge of $\emptyset$.
        The next lemma shows that the vertex $k$-brittleness does not decrease too much by deleting small Tutte bridges.
        \begin{lemma}\label{lem:removebridge}
          Let $G$ be a graph and  $A$ be a set of vertices of $G$.
          If $G'$ is the subgraph of $G$ obtained by deleting all Tutte bridges of $A$ having at most $k$ edges,
          then
          $\beta_k^\kappa(G')\ge \beta_k^\kappa (G)-\abs{A}$.
        \end{lemma}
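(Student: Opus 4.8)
The plan is to prove the equivalent inequality $\beta_k^\kappa(G)\le\beta_k^\kappa(G')+\abs{A}$ by lifting an optimal partition of $E(G')$ to one of $E(G)$. First I would fix a partition $(X_1,\ldots,X_m)$ of $E(G')$ into parts of size at most $k$ whose $\kappa_{G'}$-width equals $\beta_k^\kappa(G')$, and let $B_1,\ldots,B_r$ be the Tutte bridges of $A$ in $G$ that are deleted in forming $G'$. Each $E(B_j)$ has at most $k$ edges, and since the Tutte bridges of $A$ partition $E(G)$ while deleting a bridge removes exactly its edges, $E(G)$ is the disjoint union of $E(G')=X_1\cup\cdots\cup X_m$ and $E(B_1)\cup\cdots\cup E(B_r)$. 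So $\mathcal{P}=(X_1,\ldots,X_m,E(B_1),\ldots,E(B_r))$ is a partition of $E(G)$ into parts of size at most $k$, and it suffices to bound its $\kappa_G$-width by $\beta_k^\kappa(G')+\abs{A}$.

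The main step is a local claim about any union $F$ of parts of $\mathcal{P}$: writing $F=F_0\cup F_1$ with $F_0\subseteq E(G')$ the union of the chosen $X_i$'s and $F_1$ the union of the chosen $E(B_j)$'s (so that $F\cap E(G')=F_0$), I would show that every vertex $v$ counted by $\kappa_G(F)$ either lies in $A$ or is counted by $\kappa_{G'}(F_0)$. When $v\notin A$, all edges of $G$ incident with $v$ lie in a single Tutte bridge $B$ of $A$, since any two of them are equivalent via a walk whose only internal vertex is $v$. If $B$ is one of the deleted bridges, then $E(B)$ is a single part of $\mathcal{P}$, so $F$ contains either all edges at $v$ or none of them, and in neither case is $v$ incident with both an edge in $F$ and an edge outside $F$. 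If $B$ is not deleted, then $v\in V(G')$ and every edge of $G$ at $v$ is an edge of $G'$ at $v$ lying in $E(G')$, so the edge at $v$ in $F$ lies in $F_0$ and the edge at $v$ outside $F$ lies in $E(G')\setminus F_0$; hence $v$ is counted by $\kappa_{G'}(F_0)$.

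From the claim, $\kappa_G(F)\le\abs{A}+\kappa_{G'}(F_0)\le\abs{A}+\beta_k^\kappa(G')$, the last step because $F_0$ is a union of parts of the chosen optimal partition of $E(G')$. Maximizing over all unions $F$ of parts of $\mathcal{P}$ then bounds the $\kappa_G$-width of $\mathcal{P}$ by $\beta_k^\kappa(G')+\abs{A}$, which yields $\beta_k^\kappa(G)\le\beta_k^\kappa(G')+\abs{A}$ and hence the lemma. I expect no genuine obstacle here; the only point needing care is that for $v\notin A$ all edges at $v$ belong to one Tutte bridge and are therefore deleted together or retained together, which is immediate from the equivalence-relation description of Tutte bridges given just above the lemma statement.
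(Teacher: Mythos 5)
Your proposal is correct and follows the same approach as the paper: extend an optimal partition of $E(G')$ by adding each deleted Tutte bridge's edge set as a single part, then bound the $\kappa_G$-width by $\beta_k^\kappa(G')+\abs{A}$. You simply spell out the verification of that width bound (via the observation that all edges at a vertex $v\notin A$ lie in one Tutte bridge), which the paper leaves implicit.
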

        \begin{proof}
          Let $P'=(X_1,X_2,\ldots,X_t)$ be a partition of $E(G')$ whose $\kappa_{G'}$-width is
          equal to $\beta_k^\kappa (G')$.
          We extend $P'$ to a partition $P$ of $E(G)$
          by adding $E(B)$ as one part for each Tutte bridge $B$
          of $A$ in $G$ with at most $k$ edges.
          Then the $\kappa_G$-width of $P$ is at most $\beta_k^\kappa(G')+\abs{A}$
          and therefore $\beta_k^\kappa(G)\le \beta_k^\kappa(G')+\abs{A}$.
        \end{proof}
    
        To complete our proof, we will iteratively find an independent set $A_i$ 
        and two Tutte bridges of $A_i$ having at most $k$ edges for each $i$. 
        By combining two Tutte bridges, we will build a bigger connected subgraph, assuming that $A_i$ is nonempty. 
        Then we apply the sunflower lemma for the sets $A_1$, $A_2$, $\ldots$, which will allow us to find what we wanted.
        The next lemma allows us to find two Tutte bridges to be combined later.
        \begin{lemma}\label{lem:vbrittleconverse2}
          Let $m$, $n$, $k$ be positive integers.
          Let $H$ be a connected graph with $k$ edges and
          let $A\subsetneq V(H)$ be an independent set of $H$
          such that $H-A$ is  connected.
          Let $G$ be a graph having $mH/A$ as a subgraph
          such that
          no subgraph of $G$ is isomorphic to $nH'/A'$
          for some connected graph $H'$ with $k+1$ edges
          and an independent set $A'\subsetneq V(H')$ for
          which $H'-A'$ is connected.
          Let $X$ be a set of vertices of $G$.
          If $
          m> 4(k+1)^2n^2+\abs{X}
          $,
          then
          $G$ has two distinct Tutte bridges $B_1$, $B_2$ of $A$,
          satisfying the following.
          \begin{enumerate}[(i)]
          \item Each $B_i$ has exactly $k$ edges.
          \item $V(B_1)\cap A=V(B_2)\cap A=A$.
          \item neither $B_1-A$ nor $B_2-A$ has a vertex in $X$.
          \end{enumerate}
        \end{lemma}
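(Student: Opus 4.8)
The plan is to isolate a large set of copies of $H$ inside $mH/A$ whose attachment to $A$ has not been altered by the extra edges of $G$, and to show that each such copy is itself a Tutte bridge of $A$ with exactly $k$ edges. Let $C_1,\dots,C_m$ denote the components of $(mH/A)-A$, and for each $i$ let $H_i$ be the $i$-th copy of $H$; thus $H_i$ consists of $C_i$ together with the edges joining $C_i$ to $A$. Since $A$ is independent and $H$ is connected with $k\ge 1$ edges, every vertex of $A$ has a neighbour in $V(H)\setminus A$, so $V(H_i)\cap A=A$ for every $i$. Call $C_i$ \emph{clean} if no edge of $G$ outside $E(mH/A)$ is incident with a vertex of $C_i$ and no vertex of $C_i$ lies in $X$.

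First I would bound the number of copies failing the first condition. Let $S$ be the set of indices $i$ such that some edge of $G$ not in $E(mH/A)$ is incident with a vertex of $C_i$. If $\abs{S}\ge 4(k+1)^2n^2$, then the subgraph of $G$ formed by the copies indexed by $S$ and identified along $A$ is isomorphic to $\abs{S}H/A$; every component of $(\abs{S}H/A)-A$ is incident with an edge of $G$ outside $E(\abs{S}H/A)\subseteq E(mH/A)$, so Lemma~\ref{lemma:manyincidence} produces a subgraph of $G$ isomorphic to $nH'/A'$ for some connected $H'$ with $k+1$ edges and an independent set $A'\subsetneq V(H')$ with $H'-A'$ connected, contradicting the hypothesis. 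Hence $\abs{S}\le 4(k+1)^2n^2-1$. Since each vertex of $X$ lies in at most one $C_i$, at most $\abs{X}$ copies fail the second condition, and therefore the number of clean copies is at least $m-(4(k+1)^2n^2-1)-\abs{X}$, which is at least $2$ because $m>4(k+1)^2n^2+\abs{X}$.

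Finally, I would fix two distinct clean copies $C_i$ and $C_j$ and check that they give the desired bridges. For a clean copy $C_\ell$, cleanness forces every edge of $G$ incident with $V(C_\ell)$ to lie in $E(mH/A)$, and the only such edges are those of $H_\ell$ (the other copies are vertex-disjoint from $C_\ell$ outside $A$); consequently $V(C_\ell)$ is a connected component of $G-A$, and the Tutte bridge $B_\ell$ of $A$ in $G$ formed from $C_\ell$ has edge set exactly $E(H_\ell)$. Thus $B_\ell$ has exactly $k$ edges, $V(B_\ell)\cap A=V(H_\ell)\cap A=A$, and $V(B_\ell)\setminus A=V(C_\ell)$ is disjoint from $X$. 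Setting $B_1=B_i$ and $B_2=B_j$, which are distinct because $C_i\ne C_j$ are distinct components of $G-A$, yields (i)--(iii).

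The argument is mostly bookkeeping, and I do not expect a serious obstacle. The points that need care are verifying that restricting to the copies indexed by $S$ still satisfies the hypotheses of Lemma~\ref{lemma:manyincidence} (the ``outside edge'' condition is preserved precisely because $E(\abs{S}H/A)\subseteq E(mH/A)$), and confirming that cleanness pins the Tutte bridge $B_\ell$ down to be exactly the original copy $H_\ell$, so that it has precisely $k$ edges and meets $A$ in all of $A$.
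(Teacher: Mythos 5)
Your proof is correct and follows essentially the same route as the paper's: apply Lemma~\ref{lemma:manyincidence} to bound the number of components of $(mH/A)-A$ touched by edges outside $E(mH/A)$ by fewer than $4(k+1)^2n^2$, count that at least $\abs{X}+2$ untouched copies remain, pick two avoiding $X$, and observe that each such copy is a Tutte bridge with exactly $k$ edges meeting all of $A$. Your write-up merely spells out in more detail the restriction to the ``dirty'' copies before invoking Lemma~\ref{lemma:manyincidence} and the verification that a clean copy is exactly a Tutte bridge, both of which the paper leaves implicit.
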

        \begin{proof}
          By Lemma~\ref{lemma:manyincidence},
          less than $4(k+1)^2n^2$ components $C$ of $mH/A-A$
          are incident with an edge in $E(G)\setminus E(mH/A)$.
          Therefore there are at least $\abs{X}+2$ components  of $mH/A-A$
          that form Tutte bridges of $A$ in $G$ isomorphic to $H$.
          Among them, at least two, say $B_1$ and $B_2$, will not intersect~$X$.
          Since $H$, $H-A$ are connected and $A$ is independent in $H$,
          we deduce that $V(B_1)\cap A=V(B_2)\cap A=A$.
        \end{proof}
	We need the sunflower lemma. Let $\cF$ be a family of sets. 
	A subset $\{M_1, M_2, \ldots, M_p\}$ of $\cF$ is a \emph{sunflower} with \emph{core} $A$ (possibly an empty set) and $p$ \emph{petals} if
	for all distinct $i,j\in \{1,2, \ldots p\}$, $M_i\cap M_j=A$. 
	
	\begin{theorem}[Sunflower Lemma~{\cite[Erd\H{o}s and Rado]{ErdosR1960}}]\label{thm:sunflower}
	Let $k$ and $p$ be positive integers, and 
	$\cF$ be a family of sets each of cardinality $k$. 
	If $\abs{\cF}> k!(p-1)^k$, then $\cF$ contains a sunflower with $p$ petals.
        \end{theorem}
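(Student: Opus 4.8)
The plan is to prove the Sunflower Lemma by induction on $k$, the common cardinality of the sets in $\cF$. For the base case $k=1$, every member of $\cF$ is a singleton, and the hypothesis $\abs{\cF}>(p-1)$ forces $\abs{\cF}\ge p$, so $\cF$ itself contains $p$ distinct singletons, which form a sunflower with empty core and $p$ petals. For the inductive step I assume the statement for $k-1$ and take a family $\cF$ of $k$-element sets with $\abs{\cF}>k!(p-1)^k$.

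The key idea is to look at a maximal subfamily $\cG\subseteq\cF$ consisting of pairwise disjoint sets. If $\abs{\cG}\ge p$, then $\cG$ is already a sunflower with empty core and $p$ petals and we are done. So I may assume $\abs{\cG}\le p-1$, and I set $A=\bigcup_{M\in\cG}M$, so that $\abs{A}\le k(p-1)$. By the maximality of $\cG$, every set in $\cF$ meets $A$, hence by the pigeonhole principle there is an element $x\in A$ lying in at least $\abs{\cF}/\abs{A}\ge \abs{\cF}/(k(p-1))$ members of $\cF$.

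Now I pass to the deflated family $\cF_x=\{M\setminus\{x\}: M\in\cF,\ x\in M\}$, whose members all have cardinality $k-1$. The counting above gives $\abs{\cF_x}\ge \abs{\cF}/(k(p-1))>k!(p-1)^k/(k(p-1))=(k-1)!(p-1)^{k-1}$, so the inductive hypothesis applies and yields a sunflower $\{M_1\setminus\{x\},\ldots,M_p\setminus\{x\}\}$ in $\cF_x$ with some core $A'$ and $p$ petals. Re-inserting $x$, the sets $M_1,\ldots,M_p$ form a sunflower in $\cF$ with core $A'\cup\{x\}$ and $p$ petals, completing the induction.

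I do not expect a genuine obstacle here; the only points requiring a little care are bookkeeping the arithmetic in the pigeonhole step (checking $\abs{\cF}/(k(p-1))$ strictly exceeds $(k-1)!(p-1)^{k-1}$, which is immediate from $k!=k\cdot(k-1)!$), and handling the degenerate possibilities cleanly, namely when the maximal disjoint subfamily $\cG$ is already large enough and when $p=1$ (where the statement is vacuous since any single set is a sunflower with one petal). No earlier result from the paper is needed.
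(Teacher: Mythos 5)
Your proof is correct: it is precisely the classical Erd\H{o}s--Rado induction on $k$ (maximal pairwise disjoint subfamily, pigeonhole on an element of its union, deflate, apply the inductive hypothesis, and re-insert the common element), which is the argument behind the citation; the paper itself states the Sunflower Lemma without proof, so there is no internal proof to compare against. You also handle the only delicate points correctly, namely the $p=1$ case (avoiding division by $k(p-1)=0$) and the fact that deflation $M\mapsto M\setminus\{x\}$ is injective on the members containing $x$, so the lifted sets $M_1,\ldots,M_p$ are distinct members of $\cF$ with common pairwise intersection $A'\cup\{x\}$.
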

        Later we will apply Lemma~\ref{lem:vbrittleconverse2} iteratively
        and take $F_i:=B_1\cup B_2$ and $S_i:=A$ in the $i$-th round. Then we will apply the following lemma with $t:=2k$. Note that under this setting, $B_1\cup B_2$ is connected if $A$ is non-empty.
        \begin{lemma}\label{lem:sunflower}
          Let $m$, $n$, $k$, $t$ be positive integers.
          Let $G$ be a graph.
          For each $i\in \{1,2,\ldots,m\}$,
          let $F_i$ be a connected subgraph of $G$ with exactly $t$ edges
          having an independent set $S_i\subsetneq V(F_i)$
          such that $1\le \abs{S_i}\le k$ and 
          $F_i-X$ is connected for all $X\subsetneq S_i$.
          If
          $V(F_i)\cap V(F_j)\subseteq S_i\cap S_j$ and $S_i\neq S_j$
          for all $1\le i<j\le m$ and
          $m>k \cdot k! \binom{(t+1)t/2}{t}^k (n-1)^k $,
          then $G$ has a subgraph isomorphic to $nH/A$
          for some connected graph $H$ with exactly $t$ edges
          and an independent set $A\subsetneq V(H)$
          such that $H-A$ is connected.
        \end{lemma}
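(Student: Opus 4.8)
The plan is to extract $n$ of the subgraphs $F_i$ whose attachment structure coincides and then to observe that such a family automatically forms a copy of $nH/A$ inside $G$. First I would dispose of the case $n=1$ at once by taking $H=F_1$ and $A=\emptyset$, so assume $n\ge 2$. Since $\abs{S_i}\in\{1,\ldots,k\}$, a pigeonhole step on $\abs{S_i}$ gives a value $s\in\{1,\ldots,k\}$ and a sub-collection $\mathcal I$ of the indices with $\abs{\mathcal I}>k!\binom{(t+1)t/2}{t}^k(n-1)^k$ and $\abs{S_i}=s$ for every $i\in\mathcal I$; the sets $\{S_i:i\in\mathcal I\}$ are pairwise distinct by hypothesis. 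Put $p=\binom{(t+1)t/2}{t}(n-1)+1$. Using $1\le s\le k$ and $n\ge 2$ one checks the elementary inequality $k!\binom{(t+1)t/2}{t}^k(n-1)^k\ge s!(p-1)^s$, so the Sunflower Lemma (Theorem~\ref{thm:sunflower}) applied to $\{S_i:i\in\mathcal I\}$ yields indices $i_1,\ldots,i_p$ and a core $A^*$ (possibly empty) with $S_{i_a}\cap S_{i_b}=A^*$ for all $a\ne b$.

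Next I would record what the sunflower buys. Since the $S_{i_a}$ are pairwise distinct and all of size $s$, the core $A^*$ is a proper subset of each $S_{i_a}$; hence $A^*$ is independent in $F_{i_a}$, the graph $F_{i_a}-A^*$ is connected (by the hypothesis applied with $X=A^*\subsetneq S_{i_a}$), and $A^*\subsetneq V(F_{i_a})$. Moreover $V(F_{i_a})\cap V(F_{i_b})\subseteq S_{i_a}\cap S_{i_b}=A^*$, while $A^*\subseteq S_{i_a}\subseteq V(F_{i_a})$ and likewise for $b$, so in fact $V(F_{i_a})\cap V(F_{i_b})=A^*$ for all $a\ne b$. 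As $A^*$ is independent in every $F_{i_b}$, no $F_{i_b}$ has an edge inside $A^*$, so the graphs $F_{i_1},\ldots,F_{i_p}$ are pairwise edge-disjoint as well.

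I would then refine this family by isomorphism type relative to $A^*$: declare $F_{i_a}$ and $F_{i_b}$ equivalent if some graph isomorphism $F_{i_a}\to F_{i_b}$ restricts to the identity on $A^*$. Each $F_{i_a}$ is connected with exactly $t$ edges and hence has at most $t+1$ vertices; labelling the at most $t+1-\abs{A^*}$ vertices outside $A^*$ by an injection into a fixed ground set identifies each equivalence class with a graph having exactly $t$ edges on a fixed $(t+1)$-element vertex set, so there are at most $\binom{(t+1)t/2}{t}$ classes. Since $p>\binom{(t+1)t/2}{t}(n-1)$, some class contains at least $n$ of the $F_{i_a}$, say $F_{j_1},\ldots,F_{j_n}$. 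Taking $H=F_{j_1}$ and $A=A^*$ (regarded as a subset of $V(H)$), and composing the identity-on-$A^*$ isomorphisms $F_{j_1}\to F_{j_b}$ with the facts of the previous paragraph, the subgraph $\bigl(\bigcup_b V(F_{j_b}),\bigcup_b E(F_{j_b})\bigr)$ of $G$ is isomorphic to $nH/A$, where $H$ is connected with $\abs{E(H)}=t$, $A$ is independent, $A\subsetneq V(H)$, and $H-A$ is connected, as required.

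The hard part is conceptual rather than computational: one must run the Sunflower Lemma on the small sets $S_i$ rather than on the unwieldy vertex sets $V(F_i)$, and then notice that the resulting core is automatically the \emph{exact} pairwise intersection of the $F_i$ and that the $F_i$ are pairwise edge-disjoint, so that a single finite type-refinement---whose number of classes is controlled by the number of $t$-edge graphs on $t+1$ labelled vertices---already produces an honest copy of $nH/A$. Matching the stated constant is then just the inequality $k!\binom{(t+1)t/2}{t}^k(n-1)^k\ge s!\bigl(\binom{(t+1)t/2}{t}(n-1)\bigr)^s$ for $1\le s\le k$ and $n\ge 2$, together with the pigeonhole over the $k$ possible values of $\abs{S_i}$.
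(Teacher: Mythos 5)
Your proposal is correct and follows essentially the same route as the paper's proof: pigeonhole on $\abs{S_i}$, apply the Sunflower Lemma to the sets $S_i$, observe that the core is a proper subset of each petal (so each $F_{i_j}-A$ is connected and the $F_{i_j}$ meet pairwise exactly in the core), and then pigeonhole on the isomorphism type of the $F_{i_j}$ relative to the core. Your write-up is, if anything, slightly more careful than the paper's on two minor points (the reduction from petal size $s\le k$ to the stated $k!(p-1)^k$ bound, and the edge-disjointness of the $F_{i_j}$), but there is no substantive difference.
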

        \begin{proof}
          We may assume $n>1$ because otherwise
          we can take $H=F_1$ and $A=\emptyset$.
          Let $p=\binom{(t+1)t/2}{t}(n-1)+1\ge 2$.
          By the pigeonhole principle, more than $k! (p-1)^k$ of $S_1,S_2,\ldots,S_m$
          have the same cardinality.
          By Theorem~\ref{thm:sunflower},
          there exist $i_1<i_2<\cdots<i_p$
          such that $\{S_{i_1},S_{i_2},\ldots,S_{i_p}\}$ is
          a sunflower with $p$ petals and
          $\abs{S_{i_1}}=\abs{S_{i_2}}=\cdots=\abs{S_{i_p}}$.
          
          Let $A$ be the core, that is $A=\bigcap_{j=1}^p S_{i_j}$.
          Since $S_i\neq S_j$ for all $i\neq j$, 
          we have 
          $A\neq S_{i_j}$ for all $j\in\{1,2,\ldots,p\}$
          and therefore 
          $F_{i_j}-A$ is connected.

          Since $V(F_i)\cap V(F_j)\subseteq S_i\cap S_j$ for all $1\le i<j\le m$,
          we deduce that $F_{i_1}-A$, $F_{i_2}-A$, $\ldots$,
          $F_{i_p}-A$ are vertex-disjoint.
          There are at most $\binom{(t+1)t/2}{t}$ connected
          non-isomorphic graphs having exactly $t$ edges
          and so at least $n$ of $F_{i_1}$, $F_{i_2}$, $\ldots$,
          $F_{i_p}$ are pairwise isomorphic with isomorphisms fixing $A$,
          by the pigeonhole principle.
          This proves the lemma.
        \end{proof}
	\begin{proposition}\label{prop:vbrittleconverse}
          For positive integers $k$ and $n$, there exists an integer
          $\ell=\ell(k,n)$ such that
          every graph with vertex $k$-brittleness at least $\ell$ contains
          $nH/A$ as a subgraph
          for some connected graph $H$ with exactly $k+1$ edges
          and an independent set $A\subsetneq V(H)$
          such that $H-A$ is connected.

	\end{proposition}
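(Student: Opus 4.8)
The plan is to prove Proposition~\ref{prop:vbrittleconverse} by induction on $k$, using the lemmas developed above. The base case $k=1$ is exactly Lemma~\ref{lem:vbrittleconverse1}, which gives $\ell(1,n)=256n^4$. For the induction step, suppose the statement holds for $k-1$, and let $G$ be a graph with very large vertex $k$-brittleness (with the threshold $\ell(k,n)$ to be determined). If $G$ already contains $nH/A$ for a suitable connected $H$ with $k+1$ edges, we are done, so assume not. The first step is to apply the induction hypothesis: since a partition witnessing the vertex $k$-brittleness also bounds the vertex $(k-1)$-brittleness only after we delete small Tutte bridges, I would instead argue directly that $G$ must contain many disjoint (or sunflower-arranged) copies of connected graphs with exactly $k$ edges. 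Concretely, apply the induction hypothesis with a large target $n'$ in place of $n$ to find a subgraph isomorphic to $n' H_0/A_0$ for some connected $H_0$ with exactly $k$ edges and independent $A_0\subsetneq V(H_0)$ with $H_0-A_0$ connected.

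The second step handles this $n'H_0/A_0$ subgraph. Each component $C$ of $(n'H_0/A_0)-A_0$ together with $A_0$ forms (a candidate for) a Tutte bridge of $A_0$ in $G$ with $k$ edges. Now there are two cases. If for many such components $C$ the graph $G$ has an edge outside $E(n'H_0/A_0)$ incident with $C$, then provided $n'\ge 4(k+1)^2 n^2$ we invoke Lemma~\ref{lemma:manyincidence} directly to extract a subgraph $nH'/A'$ with $H'$ having $k+1$ edges — contradiction. Otherwise, all but a bounded number ($<4(k+1)^2 n^2$, say) of these components are genuine Tutte bridges of $A_0$ isomorphic to $H_0$ with no incident outside edge; deleting all Tutte bridges of $A_0$ with at most $k$ edges, by Lemma~\ref{lem:removebridge}, costs at most $\abs{A_0}\le\abs{V(H_0)}\le k+1$ in brittleness. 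Iterating this clean-up (there are only finitely many isomorphism types of Tutte-bridge-core pairs to worry about, bounded in terms of $k$) either terminates — in which case the final graph $G^\ast$ has vertex $k$-brittleness that has dropped by only a bounded amount yet contains no small Tutte bridge at all, forcing every edge to sit in a large "blob," which I would then bound to contradict large brittleness — or it produces, via Lemma~\ref{lem:vbrittleconverse2} and Lemma~\ref{lem:sunflower}, enough Tutte bridges arranged as a sunflower on their cores to directly yield $nH/A$ for an $H$ with $k+1$ edges (by adding one edge inside a bridge, or joining two bridges, or attaching a pendant, exactly as in the case analysis of Lemma~\ref{lemma:manyincidence}).

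More carefully, the cleanest route for the induction step: take $G$ with huge vertex $k$-brittleness and no $nH'/A'$ subgraph with $H'$ of $k+1$ edges. Greedily remove Tutte bridges with at most $k-1$ edges; by Lemma~\ref{lem:removebridge} applied repeatedly (the total number of removals is controlled because each removal drops brittleness, or we bound $\abs{A}$ summed over removals using that $G$ is "mostly" made of few-edge pieces), we reach a graph $G_1$ whose vertex $(k-1)$-brittleness is small but whose vertex $k$-brittleness is still large. Then every "unit" of $G_1$ contributing to the brittleness gap is essentially a connected subgraph with exactly $k$ edges. Feeding these into Lemma~\ref{lem:sunflower} with $t=k$ — we have many connected $k$-edge subgraphs $F_i$ meeting only along their independent cores $S_i$, with $F_i-X$ connected for $X\subsetneq S_i$, and the cores pairwise distinct after a pigeonhole — produces a subgraph isomorphic to $nH_0/A_0$ for connected $H_0$ with $k$ edges. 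Finally combine this with Lemma~\ref{lem:vbrittleconverse2} (choosing $X$ to absorb the bounded set of "bad" components) to get two clean Tutte bridges of $A_0$ isomorphic to $H_0$, and then repeat a sunflower/pigeonhole argument on how outside edges attach to the many clean bridges to add a $(k+1)$st edge, exactly reproducing the three subcases (internal edge, edge between bridges, pendant vertex with or without enlarging the core) of Lemma~\ref{lemma:manyincidence}. Tracking all the Ramsey-type and sunflower bounds gives an explicit $\ell(k,n)$.

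The main obstacle I expect is bookkeeping the iterated Tutte-bridge deletions in the induction step: each deletion of bridges with a given core $A$ costs $\abs{A}$ in brittleness via Lemma~\ref{lem:removebridge}, and a priori there could be many different cores, so I need a global argument that the total cost stays bounded — for instance by noting that once we have removed all Tutte bridges with at most $k-1$ edges, the $(k-1)$-brittleness is small, which pins down the structure enough to apply the sunflower lemma in one shot rather than iterating indefinitely. Reconciling the quantifiers so that one clean pass suffices, and choosing the cascade of constants ($n'$, the threshold for "many" components, the sunflower parameter $p$, and finally $\ell(k,n)$) consistently, is the delicate part; the graph-theoretic extractions themselves are all already packaged in Lemmas~\ref{lemma:manyincidence}, \ref{lem:removebridge}, \ref{lem:vbrittleconverse2}, and \ref{lem:sunflower}.
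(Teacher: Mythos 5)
Your overall architecture matches the paper's: induction on $k$, base case via Lemma~\ref{lem:vbrittleconverse1}, induction step that finds an $mH_1/A_1$ pattern with $k$ edges via the induction hypothesis, the dichotomy driven by Lemma~\ref{lemma:manyincidence}, clean Tutte bridges from Lemma~\ref{lem:vbrittleconverse2}, and a final appeal to the sunflower lemma. However, there is a genuine gap at the decisive step: how to pass from many $k$-edge Tutte bridges to a $(k+1)$-edge pattern. You propose feeding single $k$-edge bridges into Lemma~\ref{lem:sunflower} with $t=k$, which by that lemma's conclusion can only return $nH_0/A_0$ for an $H_0$ with exactly $k$ edges --- no progress. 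You then plan to ``add a $(k+1)$st edge'' by a sunflower/pigeonhole argument on ``how outside edges attach to the many clean bridges, exactly reproducing the three subcases of Lemma~\ref{lemma:manyincidence}.'' But this is circular: a clean Tutte bridge of $A_0$ with exactly $k$ edges has, by definition, no edge of $G$ leaving it except through $A_0$; any component with an outside incidence was already consumed by the Lemma~\ref{lemma:manyincidence} branch of the dichotomy. So in the surviving case there are no outside edges left to attach, and your mechanism for gaining the extra edge has nothing to work with.

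The paper's resolution, which is absent from your proposal, is to take the \emph{union of two} Tutte bridges $B_{i,1}\cup B_{i,2}$ sharing the same core $A_i$ (both meeting $A_i$ in all of $A_i$), producing a connected graph $F_i$ with $2k$ edges such that $F_i-X$ is connected for every $X\subsetneq A_i$. One then iterates the whole construction, deleting the used bridges via Lemma~\ref{lem:removebridge} at a bounded cost per round, to collect many such $F_i$ with $V(F_i)\cap V(F_j)\subseteq A_i\cap A_j$ and --- crucially --- $A_i\neq A_j$ (proved directly from the deletion process, not by pigeonhole as you suggest; distinctness of the cores is what makes the sunflower core a \emph{proper} subset of each $A_{i_j}$, which is what guarantees $H-A$ is connected in the sunflower lemma's output). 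Applying Lemma~\ref{lem:sunflower} with $t=2k$ then yields $nH/A$ with $H$ having $2k\ge k+1$ edges, from which a connected $(k+1)$-edge subgraph $H'$ with $H'-(A\cap V(H'))$ connected is extracted, giving the contradiction. Without the two-bridge pairing (or some equivalent device for exceeding $k$ edges), your induction step does not close.
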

	
	\begin{proof}
	We define that 
        \begin{align*}
          \ell(1, n)&:=256n^4,\\ 
          \intertext{	and for $k\ge 2$, }
           \ell(k, n)&:=\ell\left(k-1, 4(k+1)^2n^2 
                       +k^2 \cdot k! \binom{(2k+1)k}{2k}^k (n-1)^k     \right)
                       \\
            &\quad
                       +k^2 \cdot k! \binom{(2k+1)k}{2k}^k (n-1)^k  .
        \end{align*}

	We prove the statement by induction on $k$.
        If $k=1$, then it is true by Lemma~\ref{lem:vbrittleconverse1}.
	Now, we prove for $k\ge 2$.
	Suppose $G$ has vertex $k$-brittleness at least $\ell=\ell(k,n)$
        and no subgraph of $G$ is isomorphic to $nH'/A'$ for a connected graph $H'$ with
        $k+1$ edges having an independent set $A'\subsetneq V(H')$
        such that $H'-A'$ is connected.
        Let $m= 4(k+1)^2n^2+k^2 \cdot k! \binom{(2k+1)k}{2k}^k (n-1)^k 
        $.
        Let $G_1$ be the subgraph of $G$ obtained by deleting all components
        with at most $k$ edges. By Lemma~\ref{lem:removebridge}, $\beta_k^\kappa(G_1)=\beta_k^\kappa(G)$.
        Since $\ell(k,n)\ge\ell(k-1,m)$,
        by the induction hypothesis,
        $G_1$ has $mH_1/A_1$ as a subgraph
        for  some connected
        graph $H_1$ with $k$ edges having an independent set $A_1\subsetneq V(H_1)$
        such that $H_1-A_1$ is connected.
        Note that $\abs{A_1}\le k$.
        We may assume that $n\ge 2$, since $G_1$ has a component with more than $k$ edges.

        If $A_1=\emptyset$, then
        each component of $mH_1/A_1$ has a vertex incident (in $G_1$)
        with an edge not in $E(mH_1/A_1)$ because
        every component of $G_1$ has more than $k$ edges.
        By Lemma~\ref{lemma:manyincidence}.
        $G$ has a
        connected subgraph $H$ with an independent set $A$ having
        desired properties, contradicting our assumption.
        Therefore $A_1\neq\emptyset$.
        
        By Lemma~\ref{lem:vbrittleconverse2},
        $G_1$ has two Tutte bridges $B_{1,1}$ and $B_{1,2}$ of $A_1$, each having
        exactly $k$ edges such that
        $V(B_{1,1})\cap A_1=V(B_{1,2})\cap A_1=A_1$.
        Let $F_1=B_{1,1}\cup B_{1,2}$.
        Since $A_1\neq\emptyset$, $F_1$ is a connected graph.
        Then $F_1-X$ is connected  for all $X\subsetneq A_1$.

        For $i=\{2,\ldots,k\cdot k! \binom{(2k+1)k}{2k}^k (n-1)^k+1\}$,
        we define $G_i$ as the subgraph of $G_{i-1}$ obtained by
        deleting all Tutte bridges of $A_{i-1}$ having at most $k$ edges
        and then deleting all components having at most $k$ edges.
        By applying Lemma~\ref{lem:removebridge} twice,
        we deduce that 
        $\beta_k^\kappa(G_i)\ge \beta_k^\kappa (G_{i-1})-\abs{A_{i-1}}
        -\abs{\emptyset}
        \ge \beta_k^\kappa (G_{i-1})-k$.
        By induction,
        \[\beta_k^\kappa(G_i)\ge\beta_k^\kappa (G_1)
          - (i-1)k \ge  \ell(k-1,m),\]
        and by the induction hypothesis,
        $G_i$ has $mH_i/A_i$ as a subgraph for some connected
        graph $H_i$ with $k$ edges and an independent set $A_i\subsetneq V(H_i)$
        such that $H_i-A_i$ is connected.
        Note that $\abs{A_i}\le k$.
        If $A_i=\emptyset$, then each component of $mH_i/A_i$ has a vertex
        incident (in $G_i$) with an edge not in $E(mH_i/A_i)$
        because every component of $G_i$ has more than $k$ edges, contradicting the assumption by Lemma~\ref{lemma:manyincidence}.
        Thus $A_i\neq\emptyset$.
        Since
        \[m>4(k+1)^2n^2+ (i-1)k,\]
        by Lemma~\ref{lem:vbrittleconverse2},
        $G_i$ has two Tutte bridges $B_{i,1}$ and $B_{i,2}$ of $A_i$, each having
        exactly $k$ edges such that
        $V(B_{i,1})\cap A_i=V(B_{i,2})\cap A_i=A_i$
        and neither $B_{i,1}-A_i$ nor $B_{i,2}-A_i$ has a vertex
        in $A_1\cup A_2\cup \cdots \cup A_{i-1}$.
        Let $F_i=B_{i,1}\cup B_{i,2}$.
        As $A_i\neq\emptyset$, $F_i$ is a connected graph.
        Then $F_i-X$ is connected for all $X\subsetneq A_i$.

        We claim that for $i<j$, $V(F_i)\cap V(F_j)\subseteq A_i\cap A_j$.
        Suppose not. Let $x\in V(F_i)\cap V(F_j)$.
        When we construct $G_{i+1}$ from $G_i$, we remove all Tutte bridges of $A_i$ with at most $k$ edges, including all vertices of $F_i-A_i$.
        Since $F_j$ is a subgraph of $G_{j}$, 
        we deduce that $x\in A_i$.
        Because we choose $F_j$ so that $F_j-A_j$ has no vertex
        in $A_1\cup A_2\cup \cdots \cup A_{j-1}$ but $x\in A_i$,
        we conclude that $x\in  A_j$.
        This proves the claim.

        Suppose that $A_i=A_j$ for some $i<j$.
        By construction, $B_{j,1}-A_j$ has no vertex in $A_1\cup A_2\cup \cdots
        \cup A_{j-1}$.
        Note that $B_{j,1}$ is not a Tutte bridge of $A_i$ in $G_i$.
        So $G_i$ has an edge $e$ joining a vertex $v\in V(B_{j,1}-A_i)$
        to a vertex $w$ not in $V(B_{j,1})$.
        Note that $w\notin V(G_j)$ because $B_{j,1}$ is a Tutte bridge of $A_j$ in $G_j$.
        Let $p$ be the minimum integer such that
        $p\ge i$, $w\in V(G_p)$, and $w\notin V(G_{p+1})$.
        Since $B_{j,1}$ is a subgraph of $G_p$ and no vertex of $B_{j,1}-A_j$ is in $A_p$,
        all edges of $B_{j,1}$ together with $e$ are in the same Tutte bridge of $A_p$ in $G_p$, which has more than $k$ edges.
        Furthermore all edges of $B_{j,1}$ and $e$ are in the same component
        in the graph obtained from $G_p$ by deleting all Tutte bridges of $A_p$
        with at most $k$ edges.
        So $w$ is not deleted when constructing $G_{p+1}$, contradicting the assumption that $w\notin V(G_{p+1})$.
        Therefore $A_i\neq A_j$ for all $i<j$.

        By applying Lemma~\ref{lem:sunflower} to $F_i$ and $A_i$ for all $1\le i
        \le k\cdot k! \binom{(2k+1)k}{2k}^k (n-1)^k+1$,
        we deduce that $G$ has a subgraph isomorphic to $nH/A$
        for some connected graph $H$ with $2k$ edges having an independent set $A\subsetneq V(H)$ such that $H-A$ is connected.

        We claim that $H$ contains a connected subgraph $H'$ with
        exactly $k+1$ edges such that
        $H'-(A\cap V(H'))$ is connected.
        If $H-A$ has more than $k$ edges, then we can simply take $H'$ as a connected subgraph of $H-A$ with $k+1$ edges.
        If $H-A$ has at most $k$ edges, then
        let $H'$ be a connected
        subgraph of $H$ containing $H-A$ as a subgraph
        such that $H'$ has exactly $k+1$ edges.
        This proves the claim.
        However, this claim contradicts our assumption
        because $G$ contains $nH'/A'$ as a subgraph where $A'=A\cap V(H')$.
\end{proof}

	Lemma~\ref{lem:etabase1} and Proposition~\ref{prop:vbrittleconverse} imply Theorem~\ref{thm:main1}.
	\section{Edge $k$-scattered subgraph ideals}
		\label{sec:edgebrittle}
	In this section, we characterize edge $k$-scattered subgraph ideals.
	\begin{thm:main2}
	Let $k$ be a positive integer. 	
	A subgraph ideal $\cC$ is edge $k$-scattered if and only if
        \[\{K_{1,1},K_{1,2},K_{1,3},\ldots\}\not\subseteq \cC\]
        and 
        \[\{T, 2T, 3T, 4T, \ldots\}\not\subseteq \cC\]
        for every tree $T$ on $k+1$ vertices.
	\end{thm:main2}

	First we prove that for some connected graph $H$ on $k+1$ vertices, the disjoint union of sufficiently many copies of $H$ should have large edge $k$-brittleness.
	In fact, this is same for matching $k$-brittleness and rank $k$-brittleness, which we prove at the same time as follows.

	\begin{lemma}\label{lem:edgeforward1}
          Let $m$, $n$, $k$ be positive integers with $n>2m$ and
          $H$ be a connected graph on $k+1$ vertices.
          Then the following hold.
	\begin{enumerate}[(i)]
	\item $nH$ has edge $k$-brittleness at least $m+1$.	
	\item $nH$ has matching $k$-brittleness at least $m+1$.	
	\item $nH$ has rank $k$-brittleness at least $m+1$.	
	\end{enumerate}
	\end{lemma}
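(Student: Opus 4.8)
The plan is to prove part~(iii) first and then deduce (i) and (ii) from it, using the pointwise inequalities
\[
  \cutrk_G(S)\le \matc_G(S)\le \ec_G(S)\quad\text{for all }S\subseteq V(G).
\]
Here $\matc_G(S)$ is the term rank of the $S\times(V(G)\setminus S)$ bipartite adjacency matrix, which dominates its $\mathrm{GF}(2)$-rank $\cutrk_G(S)$, while a matching between $S$ and $V(G)\setminus S$ consists of crossing edges and so has size at most $\ec_G(S)$. Taking the maximum over subfamilies of a fixed partition and then the minimum over all partitions into parts of size at most $k$ preserves these inequalities, hence $\rkbrit_k(G)\le\mbrit_k(G)\le\ebrit_k(G)$ for every graph $G$; in particular a lower bound of $m+1$ on $\rkbrit_k(nH)$ yields the same bound for the other two parameters. (Parts (i) and (ii) also follow from the argument below with ``nonzero block'' replaced by ``crossing edge'' and ``matching edge'', which is presumably why the three are bundled together.)

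For part~(iii), suppose for contradiction that $\rkbrit_k(nH)\le m$, and fix a partition $(X_1,\dots,X_t)$ of $V(nH)$ into parts of size at most $k$ whose $\cutrk_{nH}$-width is at most $m$. Let $C_1,\dots,C_n$ be the $n$ copies of $H$, each with $k+1$ vertices, and for each $i$ set $Y_i=\{j:X_j\cap V(C_i)\neq\emptyset\}$. Since $\abs{V(C_i)}=k+1>k\ge\abs{X_j}$, no single part contains all of $V(C_i)$, so $\abs{Y_i}\ge 2$ for every $i$.

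Next I would run the probabilistic argument from the proof of Lemma~\ref{lem:etabase1}. For a uniformly random $I\subseteq\{1,\dots,t\}$ and each $i$, the probability that $I\cap Y_i\ne\emptyset$ and $Y_i\setminus I\ne\emptyset$ equals $1-2^{1-\abs{Y_i}}\ge 1/2$, so the expected number of such $i$ is at least $n/2>m$ because $n>2m$. Fix a set $I'$ realizing at least $m+1$ such indices and put $S=\bigcup_{j\in I'}X_j$; for each of these indices $i$ both $S\cap V(C_i)$ and $V(C_i)\setminus S$ are nonempty, and connectedness of $C_i\cong H$ provides an edge of $C_i$ between them. Since $nH$ has no edges across distinct components, the $S\times(V(nH)\setminus S)$ bipartite adjacency matrix is, after reordering rows and columns, block-diagonal with one block per copy $C_i$, namely the $(S\cap V(C_i))\times(V(C_i)\setminus S)$ submatrix. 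Each of the $m+1$ copies above contributes a nonzero block, hence a block of rank at least $1$, and rank is additive over direct sums; therefore $\cutrk_{nH}(S)\ge m+1$, contradicting the bound on the $\cutrk_{nH}$-width.

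I expect the only step demanding care to be this final cut-rank estimate, namely recognizing that the cut-rank function is additive over connected components (the matrix is block-diagonal) and that connectedness of each ``separated'' copy forces its block to be nonzero. The probabilistic averaging is routine and essentially identical to Lemma~\ref{lem:etabase1}, and the edge- and matching-brittleness versions are even more transparent, since the $m+1$ crossing edges one per separated copy are automatically pairwise vertex-disjoint and so form a matching of the relevant bipartite graph.
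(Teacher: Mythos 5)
Your proposal is correct and follows essentially the same route as the paper: the identical probabilistic averaging over random subsets of the partition indices to find a cut splitting more than $m$ components, followed by the observation that each split (connected) component contributes at least $1$ to each of the three connectivity measures. Your extra framing via the pointwise inequalities $\cutrk_G(S)\le\matc_G(S)\le\ec_G(S)$ is a valid but inessential repackaging; the paper simply concludes all three bounds directly from the same set $I$.
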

	\begin{proof}
          Let $G:=nH$.
          Let $(X_1, X_2, \ldots, X_t)$ be a partition of $V(G)$ such that
          $\abs{X_i}\le k$.
	Let $C_1, C_2, \ldots, C_n$ be the components of $G$.
        Note that each $C_i$ intersects at least two of $X_1$, $X_2$, $\ldots$, $X_t$.
        Let $I$ be a random subset of $\{1,2,\ldots,t\}$.
        For each $\ell$, the probability that $C_\ell$ contains
        both a vertex in $\bigcup_{i\in I}X_i$ and a vertex in $\bigcup_{j\in \{1,2,\ldots,t\}\setminus I} X_j$ is at least $1/2$.
        Thus, by the linearity of expectation,
        there exists $I\subseteq\{1,2,\ldots,t\}$ such that
        more than $m$ components of $G$ have 
        both a vertex in $\bigcup_{i\in I}X_i$ and a vertex in $\bigcup_{j\in \{1,2,\ldots,t\}\setminus I} X_j$.
        This implies that 
        $\eta_G(\bigcup _{i\in I} X_i)>m$,
        $\nu_G(\bigcup _{i\in I} X_i)>m$, and 
        $\rho_G(\bigcup _{i\in I} X_i)>m$.
	\end{proof}
	
	For edge $k$-brittleness, a large star is also an obstruction.
	\begin{lemma}\label{lem:edgeforward2}
          For positive integers $k$ and $m$,
          $K_{1,k+m}$ has edge $k$-brittleness at least $m+1$.
	\end{lemma}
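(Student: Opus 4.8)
The plan is to argue directly from the structure of the star, avoiding the probabilistic averaging used in Lemma~\ref{lem:edgeforward1}: since every edge of a star passes through its center, a single well-chosen part of the partition already witnesses a large cut. Concretely, I would fix an arbitrary partition $(X_1,X_2,\ldots,X_t)$ of $V(K_{1,k+m})$ into parts of size at most $k$, and show that it has $\ec$-width at least $m+1$; since the partition is arbitrary, this gives $\ebrit_k(K_{1,k+m})\ge m+1$.

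Write $c$ for the center of $K_{1,k+m}$ and let $X_j$ be the part of the partition that contains $c$. Because $\abs{X_j}\le k$, the set $X_j$ contains at most $k-1$ of the $k+m$ leaves, so at least $(k+m)-(k-1)=m+1$ leaves lie outside $X_j$. I would then take $I=\{j\}$, so that $\bigcup_{i\in I}X_i=X_j$: for each of the at least $m+1$ leaves not in $X_j$, the edge joining that leaf to $c$ has exactly one end in $X_j$, and these edges are pairwise distinct. Hence $\ec_{K_{1,k+m}}(X_j)\ge m+1$, as desired.

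There is essentially no obstacle here; the only spot needing a moment's care is the degenerate case $k=1$, where $X_j=\{c\}$ and the count of crossing edges is simply $k+m=m+1$, consistent with the claimed bound.
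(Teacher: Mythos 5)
Your argument is correct and is essentially identical to the paper's proof: both isolate the part containing the center, observe it holds at most $k-1$ leaves, and count the at least $(k+m)-(k-1)=m+1$ crossing edges. No issues.
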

	\begin{proof}
	Let $(X_1, X_2, \ldots, X_t)$ be a partition of $V(K_{1,k+m})$ such that 
        $\abs{X_i}\le k$.
	We may assume that  $X_1$ contains the center of $K_{1,k+m}$.
	Then $\ec_{K_{1,k+m}}(X_1)\ge (k+m)-(k-1)$.
 	\end{proof}
	
	Now, we show the backward direction of Theorem~\ref{thm:main2}.

	\begin{proposition}\label{prop:edgebrittleconverse}
	For all positive integers $k$ and $n$, there exists an integer $\ell=\ell(k,n)$ such that 	
	every graph with edge $k$-brittleness more than $\ell$ contains  a subgraph isomorphic to 
	either $K_{1,n}$ or $nT$ for some tree $T$ on $k+1$ vertices.
	\end{proposition}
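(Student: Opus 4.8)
The plan is to prove the contrapositive: if $G$ has no subgraph isomorphic to $K_{1,n}$ and no subgraph isomorphic to $nT$ for any tree $T$ on $k+1$ vertices, then the edge $k$-brittleness of $G$ is at most some constant $\ell=\ell(k,n)$. Since $G$ has no $K_{1,n}$ subgraph, its maximum degree is at most $n-1$. The partition I would exhibit puts each component of $G$ of order at most $k$ into a part of its own; such parts contribute nothing to the edge-width, since $\ec_G(X')=0$ for every union $X'$ of whole components. So all the work lies in controlling the components of order greater than $k$, and the goal is to show that there are only boundedly many of them and that each has bounded order.

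The combinatorial core I would isolate as a lemma: for all integers $d,k\ge 1$, every connected graph with at least $k+1$ vertices and maximum degree at most $d$ contains a subtree on exactly $k+1$ vertices, and moreover for every integer $s\ge 1$ there is an integer $M=M(d,k,s)$ such that every connected graph with more than $M$ vertices and maximum degree at most $d$ contains more than $s$ pairwise vertex-disjoint subtrees, each on exactly $k+1$ vertices. I would prove this by taking a spanning tree, rooting it, and processing it bottom-up: at each vertex, attach the ``active'' subtrees of its children to form a new active subtree; as soon as the active subtree at a vertex reaches size at least $k+1$, it has between $k+1$ and $(d+1)k$ vertices (each of its at most $d$ children contributed at most $k$, since a child whose active subtree reached $k+1$ would already have been harvested), so one can extract from it a subtree on exactly $k+1$ vertices, delete all the vertices of that chunk, and continue. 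Each step consumes at most $(d+1)k$ vertices and produces one subtree, and at most $k$ vertices remain unprocessed at the root, so the number of subtrees produced is at least $(N-k)/((d+1)k)$, which exceeds $s$ once $N$ is large.

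Next I would combine this with $nT$-freeness. Let $r$ be the (finite) number of isomorphism classes of trees on $k+1$ vertices. If $G$ contained more than $(n-1)r$ pairwise vertex-disjoint subtrees on $k+1$ vertices, then $n$ of them would be pairwise isomorphic by the pigeonhole principle, giving an $nT$ subgraph, a contradiction. Since subtrees chosen inside distinct components are automatically vertex-disjoint, the lemma (with $d=n-1$, $s=(n-1)r$, and $M=M(n-1,k,(n-1)r)$) forces both that $G$ has at most $(n-1)r$ components of order greater than $k$ and that every component of $G$ has order at most $M$. I would then take the partition that makes each component of order at most $k$ a single part and partitions the vertex set of each larger component arbitrarily into parts of size at most $k$. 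For any union $X'$ of parts, every edge counted by $\ec_G(X')$ lies inside a component of order greater than $k$ (a small component is entirely inside or outside $X'$), so $\ec_G(X')\le\sum_C\abs{E(C)}$ summed over the at most $(n-1)r$ large components, each having at most $M$ vertices and maximum degree at most $n-1$, hence fewer than $Mn$ edges. Thus $\ec_G(X')<(n-1)rMn=:\ell(k,n)$ for every $X'$, so the edge $k$-brittleness of $G$ is less than $\ell(k,n)$.

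The main obstacle is the harvesting lemma --- converting the bounded-degree hypothesis into the existence of many vertex-disjoint small subtrees. Once it is in hand, the two hypotheses play cleanly separated roles: the absence of $K_{1,n}$ bounds the maximum degree, which is genuinely necessary (for $k\ge 2$ an arbitrarily large star is $nT$-free yet has large edge $k$-brittleness by Lemma~\ref{lem:edgeforward2}), and the absence of $nT$ then bounds both the number and the order of the nontrivial components; the remaining arguments are routine bookkeeping.
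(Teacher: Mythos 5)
Your proof is correct, but it takes a genuinely different route from the paper's. The paper proves the proposition by induction on $k$: the base case extracts a large matching ($nK_2$) via Vizing's theorem, and the inductive step takes $mT$ for a tree $T$ on $k$ vertices with $m=4k(n-1)^2+1$, builds a minimal forest $G'$ of edges leaving the copies, extracts a large matching from $G'$ by K\H{o}nig's theorem, uses a random bipartition of the copies to find many matching edges attaching to copies of a single vertex $w$ of $T$, and thereby grows $T$ by one leaf to obtain $nT'$. You instead argue the contrapositive directly and globally: excluding $K_{1,n}$ bounds the maximum degree by $n-1$, and your harvesting lemma (processing a rooted spanning tree bottom-up, cutting off a pending subtree whenever it reaches $k+1$ vertices, each harvest consuming at most $(d+1)k$ vertices) shows that any bounded-degree connected graph on more than $M=s(d+1)k+k$ vertices contains more than $s$ pairwise vertex-disjoint subtrees on exactly $k+1$ vertices; excluding $nT$ then bounds both the order and the number of components with more than $k$ vertices, after which the brittleness bound is immediate from the exhibited partition. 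Both arguments are sound. Your approach avoids the induction on $k$, the probabilistic bipartition, and the edge-coloring theorems entirely, and yields an explicit, essentially polynomial bound $\ell(k,n)\approx n^2 r k M$ (with $r$ the number of tree types on $k+1$ vertices), whereas the paper's recursion $\ell(k,n)=\ell(k-1,4k(n-1)^2+1)$ compounds to a much larger bound; the paper's scheme has the advantage of running in parallel with the inductive architecture used for the vertex- and matching-brittleness theorems in the surrounding sections. One presentational note: your parenthetical that a large star witnesses the necessity of excluding $K_{1,n}$ requires $n\ge 2$ as well as $k\ge 2$ (for $n=1$ the star contains $1\cdot K_{1,k}$), but this is a side remark and does not affect the proof.
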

	\begin{proof}
          Let $\ell(1,n)=n(n-1)$
          and $\ell(k,n)=\ell(k-1,4  k(n-1)^2+1)$ for $k\ge 2$.

          We proceed by induction on $k$.
          We may assume that every vertex has degree at most $n-1$.
          If $k=1$, then by the theorem of Vizing,
          $G$ has a matching of size at least $\abs{E(G)}/n$.
          Since
          the edge $1$-brittleness is less than or equal to $\abs{E(G)}$,
          $G$ has a matching of size more than $\ell(1,n)/n=n-1$,
          and so $G$ contains a subgraph isomorphic to $nK_2$.
          Thus, we may assume that $k>1$.

          We may assume that every component of $G$ has
          more than $k$ vertices, because otherwise removing them does not
          decrease the edge $k$-brittleness.
          By the induction hypothesis, $G$
          has a subgraph isomorphic to $mT$ for
          a tree $T$ on $k$ vertices
          where $m=4k(n-1)^2 +1$.
          Let $C_1$, $C_2$, $\ldots$, $C_m$ be the disjoint copies of $T$ in $G$.

          Let $G'$ be a minimal subgraph of $G$
          such that for all $1\le i\le m$, $G'$ has at least one edge
          joining $C_i$ with a vertex not in $C_i$.
          Since each edge of $G'$ is incident with at most two of $C_1$, $C_2$, $\ldots$, $C_m$,
          we have $\abs{E(G')}\ge \lceil m/2\rceil > 2k(n-1)^2$.
          Note that $G'$ is a forest.
          So by K\"onig's theorem on the edge coloring of bipartite graphs,
          $G'$ is $(n-1)$-edge-colorable and so
          it has a matching $M$ with $\abs{M}> 2k(n-1)$.
          Each edge of $M$ is incident with at least one copy of some vertex of $T$ in $mT$.

          Let $I$ be a random subset of $\{1,2,\ldots,m\}$.
          Let $X=\bigcup_{i\in I} V(C_i)$ and $Y=V(G)\setminus X$.
        The probability that an edge in $M$ has one end in $X$ and the other end in $Y$ is $1/2$ and therefore
        there exist $I$ and $M'\subseteq M$ such that
        $\abs{M'}\ge \abs{M}/2>k(n-1)$ and
        each edge of $M'$ has one end in $X$ and the other end in $Y$.

          Now $M'$ has a subset $M''$ with $\abs{M''}>n-1$
          such that
          there exists a vertex $w$ of $T$ with the property that 
          for every edge of $M''$, its end in $X$
          is a copy of $w$ in $mT$.
          Let $T'$ be the tree obtained from $T$ by adding a new vertex adjacent to
          $w$ only.
          Then $G$ has $nT'$ as a subgraph.
	\end{proof}
	
	Proposition~\ref{prop:edgebrittleconverse}
        and 
	Lemmas~\ref{lem:edgeforward1} and \ref{lem:edgeforward2}
        imply Theorem~\ref{thm:main2}.

	\section{Matching $k$-scattered subgraph ideals}
	\label{sec:matchingbrittle}
	In this section, we characterize matching $k$-scattered subgraph ideals.
	We already proved in Lemma~\ref{lem:edgeforward1} that 
	for a connected graph $H$ on $k+1$ vertices, the disjoint union of sufficiently many copies of $H$ has large matching $k$-brittleness.
	Such obstructions exactly characterize matching $k$-scattered subgraph ideals.
	
	\begin{thm:main3}
	Let $k$ be a positive integer. 	
	A subgraph ideal $\cC$ is matching $k$-scattered if and only if
        \[
          \{T, 2T, 3T, \ldots\}\not\subseteq \cC
        \]
        for every tree $T$ on $k+1$ vertices.
	\end{thm:main3}

        First let us prove that deleting a vertex does not decrease
        the matching $k$-brittleness a lot.
        \begin{lemma}\label{lem:delmat}
          Let $k$ be a positive integer. For each vertex $v$ of a graph $G$, 
          \[\beta_k^\nu (G)\le \beta_k^\nu(G-v)+1.\]
        \end{lemma}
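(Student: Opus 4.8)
The plan is to take an optimal partition of $V(G-v)$ and extend it by adding $v$ as a new singleton part. First I would fix a partition $(X_1,X_2,\ldots,X_t)$ of $V(G-v)$ into parts of size at most $k$ whose $\nu_{G-v}$-width equals $\beta_k^\nu(G-v)$, and set $X_{t+1}=\{v\}$. Since $k\ge 1$, the part $X_{t+1}$ has size $1\le k$, so $(X_1,\ldots,X_{t+1})$ is a partition of $V(G)$ into parts of size at most $k$. It therefore suffices to show that its $\nu_G$-width is at most $\beta_k^\nu(G-v)+1$.

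The key observation is that in a bipartite graph a single vertex is covered by at most one edge of any matching, so inserting one vertex into one side of the bipartition (together with its incident edges) increases the maximum matching size by at most $1$. I would make this precise as follows: for any $S\subseteq V(G)$, put $S'=S\setminus\{v\}$. Then $(G-v)[S',V(G-v)\setminus S']$ is exactly the bipartite graph $G[S,V(G)\setminus S]$ with the vertex $v$ and all edges incident with $v$ deleted, regardless of whether $v\in S$ or $v\notin S$. Hence $\nu_G(S)\le \nu_{G-v}(S')+1$, and this single inequality handles both cases uniformly.

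Now take any $I\subseteq\{1,2,\ldots,t+1\}$ and let $S=\bigcup_{i\in I}X_i$. Writing $S'=S\setminus\{v\}=\bigcup_{i\in I\setminus\{t+1\}}X_i$, the set $S'$ is a union of parts of the original partition of $V(G-v)$, so $\nu_{G-v}(S')\le\beta_k^\nu(G-v)$, and therefore $\nu_G(S)\le\beta_k^\nu(G-v)+1$ by the previous paragraph. Since $I$ was arbitrary, the $\nu_G$-width of $(X_1,\ldots,X_{t+1})$ is at most $\beta_k^\nu(G-v)+1$, which gives $\beta_k^\nu(G)\le\beta_k^\nu(G-v)+1$.

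The argument is essentially routine; the only step requiring a moment's thought is the monotonicity estimate $\nu_G(S)\le\nu_{G-v}(S')+1$, that is, that a single vertex contributes at most $1$ to a maximum matching of a bipartite cut. Everything else is bookkeeping about which side $v$ falls on, which this estimate absorbs, together with the trivial check that the new partition still has parts of size at most $k$.
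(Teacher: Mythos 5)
Your proof is correct and is essentially the paper's argument: both take an optimal partition of $V(G-v)$, append $\{v\}$ as a singleton part, and observe that the width increases by at most $1$. The paper simply asserts this last step, whereas you justify it via the inequality $\nu_G(S)\le\nu_{G-v}(S\setminus\{v\})+1$, which is a correct and welcome elaboration.
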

        \begin{proof}
          Let $P'=(X_1,X_2,\ldots,X_t)$ be a partition of $V(G-v)$
          such that $\abs{X_i}\le k$
          and the $\nu_{G-v}$-width of $P'$ is minimum, that is  $\beta_k^\nu(G-v)$.
          Let $P=(X_1,X_2,\ldots,X_t,\{v\})$.
          Then the $\nu_G$-width of $P$ is at most  $\beta_k^\nu(G-v)+1$.
        \end{proof}
        
        The following proposition with Lemma~\ref{lem:edgeforward1}
        proves Theorem~\ref{thm:main3}.

	\begin{proposition}\label{prop:matchingbrittleconverse}
	For all positive integers $k$ and $n$, there exists $\ell=\ell(k,n)$ such that 	
	every graph with matching $k$-brittleness more than $\ell$ contains a subgraph isomorphic to 
	$nT$ for some tree $T$ on $k+1$ vertices.
	\end{proposition}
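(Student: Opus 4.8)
The feature of matching $k$-brittleness that I would exploit is that, unlike edge $k$-brittleness, it changes by at most $1$ when a single vertex is deleted; this is exactly the content of Lemma~\ref{lem:delmat}. Because of this, I would \emph{not} try to imitate the proof of Proposition~\ref{prop:edgebrittleconverse} (there is no star obstruction available here, so degrees cannot be bounded for free, and an induction on $k$ would have to fight high-degree ``hub'' vertices). Instead I would argue directly, with no induction on $k$: a \emph{bounded} set of vertices whose removal breaks $G$ into pieces of size at most $k$ already bounds $\beta_k^\nu(G)$, and such a set is produced for free by a maximal packing of small trees. So the statement should follow with something like $\ell(k,n)=(np-1)(k+1)$, where $p$ is the (finite) number of pairwise non-isomorphic trees on $k+1$ vertices.

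Concretely, suppose $G$ has $\beta_k^\nu(G)>\ell:=(np-1)(k+1)$. Choose a family $\mathcal D=\{D_1,\dots,D_s\}$ of pairwise vertex-disjoint subgraphs of $G$, each isomorphic to a tree on $k+1$ vertices, with $s$ as large as possible, and set $Z=\bigcup_{i=1}^s V(D_i)$, so $\abs Z=s(k+1)$. First I would check that every component of $G-Z$ has at most $k$ vertices: any connected graph on at least $k+1$ vertices contains, as a subgraph, a tree on exactly $k+1$ vertices (take a spanning tree and delete leaves until $k+1$ vertices remain), so a component of $G-Z$ with at least $k+1$ vertices would yield a tree $D'$ on $k+1$ vertices that is vertex-disjoint from $Z$ and hence from every $D_i$, contradicting the maximality of $\mathcal D$. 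Consequently the partition of $V(G-Z)$ into the vertex sets of its components has all parts of size at most $k$, and the union of any subcollection of parts is a union of components, which sends no edge to the rest; therefore this partition has $\nu_{G-Z}$-width $0$, so $\beta_k^\nu(G-Z)=0$.

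Now I would apply Lemma~\ref{lem:delmat} once for each vertex of $Z$, deleting the vertices of $Z$ one at a time, to get
\[
  \beta_k^\nu(G)\le \beta_k^\nu(G-Z)+\abs Z=\abs Z=s(k+1).
\]
Combined with $\beta_k^\nu(G)>(np-1)(k+1)$ this forces $s\ge np$. Since each $D_i$ is isomorphic to one of only $p$ trees on $k+1$ vertices, the pigeonhole principle gives $n$ indices $i$ for which the $D_i$ are all isomorphic to one common tree $T$ on $k+1$ vertices; as these $n$ subgraphs are pairwise vertex-disjoint, their union is a subgraph of $G$ isomorphic to $nT$, which is what we want.

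The only points requiring any care are the two routine facts used above — that $\beta_k^\nu$ vanishes on graphs whose every component has at most $k$ vertices, and that every connected graph on at least $k+1$ vertices contains a $(k+1)$-vertex tree as a subgraph — together with the single substantive input, already isolated as Lemma~\ref{lem:delmat}, that vertex deletion drops matching $k$-brittleness by at most one. I do not expect any genuine obstacle beyond assembling these; the whole difficulty of the matching case, compared with the edge case, is really just the recognition that Lemma~\ref{lem:delmat} lets one replace the degree-reduction/induction machinery by this short packing argument.
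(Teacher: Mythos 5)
Your proof is correct and is essentially the paper's argument: both hinge on Lemma~\ref{lem:delmat} to show that a family of pairwise disjoint $(k+1)$-vertex trees whose removal leaves only components of size at most $k$ must be large, and then apply the pigeonhole principle over isomorphism types of trees on $k+1$ vertices. The only differences are cosmetic: you take a maximal packing and bound $\beta_k^\nu(G)$ afterwards, whereas the paper extracts the connected $(k+1)$-vertex pieces greedily while tracking the brittleness, and you count unlabeled rather than labeled trees, which marginally improves the constant $\ell(k,n)$.
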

	\begin{proof}
          Let $\ell(k,n)= (k+1)^k (n-1)$. Let $G$ be a graph with matching $k$-brittleness more than $\ell(k,n)$.
          Let $G_0=G$ and $S_0=\emptyset$.
          We claim that there exist disjoint subsets $S_1$, $S_2$, $\ldots$,
          $S_{(k+1)^{k-1}(n-1)}$,           $S_{(k+1)^{k-1}(n-1)+1}$
          such that each $S_i$ induces a connected subgraph of $G$
          with $k+1$ vertices.
          For $i=1,2,\ldots, (k+1)^{k-1}(n-1)+1$,
          let $G_i$ be the induced subgraph of $G_{i-1}-S_{i-1}$
          obtained by deleting all
          components with at most $k$ vertices.
          Notice that by Lemma~\ref{lem:delmat},
          $\beta_k^\nu (G_i)\ge \beta_k^\nu (G_{i-1})-\abs{S_{i-1}}
          = \beta_k^\nu (G_{i-1})-(k+1)$.
          By induction, we deduce that
          $\beta_k^\nu (G_i)\ge \beta_k^\nu (G)- (k+1) (i-1)>0$.
          Thus $G_i$ contains a component with more than $k$ vertices
          and therefore it has a vertex set $S_i$ of size $k+1$ inducing
          a connected subgraph.
          This proves the claim.

          Let $T_i$ be a spanning tree of $G[S_i]$ for each $i$.
          Since the number of labeled
          trees on $k+1$ vertices is $(k+1)^{k-1}$,
          there exist more than $n-1$ of these spanning trees
          that are pairwise isomorphic.
		\end{proof}

	\section{Rank $k$-scattered vertex-minor ideals}
\label{sec:rankbrittle}

	We characterize rank $k$-scattered vertex-minor ideals. As we mentioned, the rank $k$-brittleness of a graph may increase when taking a subgraph.
        Instead we use vertex-minors because of the following lemma.
        \begin{lemma}[See Oum~{\cite[Proposition 2.6]{Oum2004}}]\label{lem:loc}
          If $G$ is locally equivalent to $G'$, then for every subset $X$ of vertices of $G$,
          $\rho_G(X)=\rho_{G'}(X)$.
        \end{lemma}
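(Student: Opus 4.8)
I would first reduce to a single local complementation: since local equivalence is, by definition, generated by the operations $*v$, it suffices to prove $\rho_G(X)=\rho_{G*v}(X)$ for an arbitrary vertex $v$ of $G$ and an arbitrary $X\subseteq V(G)$, after which the general statement follows by induction on the length of the sequence of local complementations taking $G$ to $G'$. So the whole argument is about one local complementation.

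The plan is to pass everything through the adjacency matrix over the binary field. Write $Y=V(G)\setminus X$ and let $u$ be the characteristic column vector of $N_G(v)$ over $\mathbb{F}_2$. I would first establish the matrix identity $A(G*v)=A(G)+uu^{T}+\operatorname{diag}(u)$ over $\mathbb{F}_2$: the term $uu^{T}$ is $1$ in position $(a,b)$ exactly when $a,b\in N_G(v)$, so adding it flips precisely the adjacencies among neighbors of $v$ (which is what $*v$ does) together with the unwanted diagonal entries indexed by $N_G(v)$, and adding $\operatorname{diag}(u)$ restores the zero diagonal. Restricting to the $X\times Y$ block, the diagonal term vanishes, so the cut-matrix $M=A(G)[X,Y]$ is changed to $M'=M+u_X u_Y^{T}$, where $u_X$, $u_Y$ are the characteristic vectors of $N_G(v)\cap X$ and $N_G(v)\cap Y$; in particular the perturbation has rank at most $1$.

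The key observation is that this rank-one perturbation does not enlarge the row space of $M$ when $v\in X$, nor its column space when $v\in Y$. Indeed, if $v\in X$, the row of $M$ indexed by $v$ is exactly $u_Y^{T}$, because $*v$ does not change the edges incident with $v$; hence each row of $u_X u_Y^{T}$ is $\mathbf{0}$ or $u_Y^{T}$, so the row space of $M'$ is contained in that of $M$ and $\rho_{G*v}(X)=\rank M'\le \rank M=\rho_G(X)$. If $v\in Y$, the column of $M$ indexed by $v$ equals $u_X$, and the symmetric argument with columns gives the same inequality. Finally, since $(G*v)*v=G$ and $N_{G*v}(v)=N_G(v)$, applying this inequality with $G*v$ in place of $G$ yields $\rho_G(X)\le \rho_{G*v}(X)$, so equality holds.

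The only point that needs a little care is the matrix identity itself, and within it the claim that the relevant row (resp.\ column) of the perturbed cut-matrix coincides with a row (resp.\ column) of the original one; once that is verified, the conclusion is a one-line rank comparison plus the involution $(G*v)*v=G$. I would note in passing that the argument can equivalently be phrased bipartitely, viewing $M$ as the biadjacency matrix of $G[X,Y]$ on which $*v$ acts by a twinning move along the row or column of $v$, which is a classical rank-preserving operation; but the matrix version above is self-contained and shortest, so that is the one I would present.
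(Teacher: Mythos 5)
Your proof is correct. The paper gives no proof of this lemma---it simply cites Oum's Proposition 2.6---and your argument (reduce to a single local complementation, write $A(G*v)=A(G)+uu^{T}+\operatorname{diag}(u)$ over $\mathbb{F}_2$, observe that the rank-one perturbation of the cut-matrix lies in its row space when $v\in X$ and in its column space when $v\in Y$, then use the involution $(G*v)*v=G$) is essentially the standard argument from that reference.
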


        Here is our main theorem for rank $k$-scattered vertex-minor ideals.
	\begin{thm:main4}
	Let $k$ be a positive integer. 	
	A vertex-minor ideal $\cC$ is rank $k$-scattered if and only if
        for every connected graph $H$ on $k+1$ vertices,
        \[
          \{ H, 2H, 3H, 4H, \ldots\}\not\subseteq \cC.
        \]
	\end{thm:main4}
        
        First, it is easy to observe the following.
        \begin{proposition}\label{prop:vmbrittle}
          If $H$ is a vertex-minor of $G$, then
          \[\beta_k^\rho (G)\le \beta_k^\rho(H)+\abs{V(G)}-\abs{V(H)}.\]
        \end{proposition}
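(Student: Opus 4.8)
The plan is to first reduce to the case in which $H$ is an \emph{induced} subgraph of $G$, and then extend an optimal partition of $V(H)$ to a partition of $V(G)$ by adding singletons, controlling the cut-rank of each union by a submatrix-rank inequality.

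First I would use that the cut-rank function is a local-equivalence invariant. By Lemma~\ref{lem:loc}, if $G'$ is locally equivalent to $G$ then $\rho_{G'}(X)=\rho_G(X)$ for all $X\subseteq V(G)=V(G')$, and hence $\beta_k^\rho(G')=\beta_k^\rho(G)$. Since $H$ is a vertex-minor of $G$, there is a graph $G'$ locally equivalent to $G$ and a set $W\subseteq V(G')$ with $\abs{W}=\abs{V(H)}$ such that $H$ is isomorphic to $G'[W]$. Replacing $G$ by $G'$ (and $H$ by this isomorphic copy), we may assume that $H=G[W]$, so that the adjacency matrix of $H$ is the principal submatrix $A(G)[W,W]$ of $A(G)$, and $\abs{V(G)}-\abs{V(H)}=\abs{V(G)\setminus W}$.

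Next, take a partition $(X_1,\ldots,X_t)$ of $V(H)=W$ into parts of size at most $k$ whose $\rho_H$-width equals $\beta_k^\rho(H)$, and extend it to a partition $P$ of $V(G)$ by adding every vertex of $V(G)\setminus W$ as a singleton part; this is legitimate since $k\ge 1$. Let $Y$ be the union of some parts of $P$, and let $M=A(G)[Y,V(G)\setminus Y]$ be the matrix (over the binary field) with rows indexed by $Y$ and columns indexed by $V(G)\setminus Y$ whose $(a,b)$-entry is $1$ exactly when $ab\in E(G)$, so that $\rho_G(Y)=\rank M$. Deleting from $M$ the rows indexed by $Y\setminus W$ and the columns indexed by $(V(G)\setminus W)\setminus Y$ leaves exactly $A(G)[\,Y\cap W,\ W\setminus Y\,]$, and since deleting one row or one column of a matrix lowers its rank by at most $1$, we obtain
\[
\rho_G(Y)\ \le\ \rank A(G)[\,Y\cap W,\ W\setminus Y\,]\ +\ \abs{Y\setminus W}\ +\ \abs{(V(G)\setminus W)\setminus Y}.
\]
The first term equals $\rho_H(Y\cap W)\le\beta_k^\rho(H)$ because $A(G)[W,W]=A(H)$, and the last two terms sum to exactly $\abs{V(G)\setminus W}=\abs{V(G)}-\abs{V(H)}$. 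Hence the $\rho_G$-width of $P$ is at most $\beta_k^\rho(H)+\abs{V(G)}-\abs{V(H)}$, and since $\beta_k^\rho(G)$ is the minimum $\rho_G$-width over all partitions of $V(G)$ into parts of size at most $k$, the proposition follows.

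Once the local-equivalence reduction is made, the rest is bookkeeping; the only point needing a little care is the submatrix rank inequality and checking that the two ``defect'' terms $\abs{Y\setminus W}$ and $\abs{(V(G)\setminus W)\setminus Y}$ add up precisely to $\abs{V(G)\setminus W}$, so I do not expect a genuine obstacle here.
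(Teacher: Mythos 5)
Your proof is correct and follows essentially the same route as the paper: reduce to the induced-subgraph case via local equivalence (Lemma~\ref{lem:loc}), then extend an optimal partition of $V(H)$ by singleton parts and charge each missing vertex one unit of rank. The only cosmetic difference is that the paper deletes the vertices of $V(G)\setminus V(H)$ one at a time (losing at most $1$ per step, as in Lemma~\ref{lem:delmat}), whereas you account for all of them at once with the submatrix rank inequality; the underlying argument is the same.
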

	\begin{proof}
	Let $G'$ be a graph locally equivalent to $G$ such that $H$ is an induced subgraph of $G'$.
	Note that applying local complementation does not change the rank $k$-brittleness of a graph by Lemma~\ref{lem:loc}.
	Therefore, we have $\rkbrit_k(G')=\rkbrit_k(G)$.
        It is easy to observe that
        removing a vertex may decrease the rank $k$-brittleness by at most $1$
        by a proof analogous to the proof of Lemma~\ref{lem:delmat}.
	Therefore, 
	$\rkbrit_k(H)\ge \rkbrit_k(G')-(\abs{(V(G')}-\abs{V(H)})= \rkbrit_k(G)-(\abs{(V(G)}-\abs{V(H)})$, as required.
	\end{proof}

	Lemma~\ref{lem:edgeforward1} states that
	for a connected graph $H$ on $k+1$ vertices, the disjoint union of sufficiently many copies of $H$ has large rank $k$-brittleness.
	It means that if $\{ H, 2H, 3H, 4H, \ldots\}\subseteq \cC$ for some connected graph $H$ on $k+1$ vertices, 
	then $\cC$ is not rank $k$-scattered.
 	Now we focus on the other direction of Theorem~\ref{thm:main4}.
	We need the following Ramsey-type theorem for bipartite graphs without twins.
\begin{theorem}[Ding, Oporowski, Oxley,
  Vertigan~\cite{GuoliBJD1996}]\label{thm:largebipartite}
     For every positive integer $n$,
     there exists an integer $f(n)$ 
     such that for every bipartite graph $G$ with a bipartition $(S,T)$, 
     if no two vertices in $S$ have the same set of neighbors
     and      $\abs{S}\ge f(n)$,  
     then $S$ and $T$ have $n$-element subsets $S'$ and $T'$, respectively, such that 
     $G[S',T']$ is isomorphic to $\S_{n}\mat\S_{n}$, $\S_{n}\tri \S_{n}$, or
     $\S_{n}\antimat\S_{n}$. 
\end{theorem}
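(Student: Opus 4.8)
The statement is a Ramsey-type dichotomy, so I would prove it by a chain of pigeonhole and Ramsey reductions, using one symmetry. The \emph{bipartite complement} of $G$ relative to $(S,T)$ — same vertex set, with the $S$--$T$ edges replaced by the $S$--$T$ non-edges — again has all $S$-neighbourhoods distinct (since $N(s)=N(s')$ iff $T\setminus N(s)=T\setminus N(s')$), it interchanges $\S_n\mat\S_n$ with $\S_n\antimat\S_n$, and it turns $\S_n\tri\S_n$ into a half-graph which still contains a copy of $\S_{n-1}\tri\S_{n-1}$. Hence, after inflating $n$ by a constant, it suffices to produce one of the three target graphs in \emph{either} $G$ or its bipartite complement; this halves the work on the dual cases below.

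Fix $f(n)$ huge. For each pair $\{s,s'\}\subseteq S$, since $N(s)\neq N(s')$ at least one of $N(s)\setminus N(s')$, $N(s')\setminus N(s)$ is nonempty, and I record which of the four ``atoms'' $N(s)\cap N(s')$, $N(s)\setminus N(s')$, $N(s')\setminus N(s)$, $T\setminus(N(s)\cup N(s'))$ are empty; this is a colouring of the pairs of $S$ by a bounded number of colours. By Ramsey's theorem there is a subset $S_1\subseteq S$, still arbitrarily large, on which every pair gets the same colour, and I would then resolve the colour classes:
\begin{itemize}
\item If one of the two difference-atoms is always empty, the sets $\{N(s):s\in S_1\}$ form a chain under inclusion; picking $N(u_1)\subsetneq\dots\subsetneq N(u_{n+1})$ and $w_j\in N(u_{j+1})\setminus N(u_j)$ gives, after relabelling $v_i=u_{i+1}$, a copy of $\S_n\tri\S_n$ on $\{v_1,\dots,v_n\}$ and $\{w_1,\dots,w_n\}$.
\item If all pairs are incomparable and $N(s)\cap N(s')=\emptyset$ throughout, then one neighbour of each of $n$ chosen vertices yields pairwise disjoint singletons, hence $\S_n\mat\S_n$; dually, if all pairs are incomparable and $N(s)\cup N(s')=T$ throughout, the bipartite complement falls into the previous case and produces $\S_n\antimat\S_n$.
\item The surviving class is that \emph{all four atoms are nonempty for every pair of $S_1$}.
\end{itemize}

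\textbf{The main obstacle} is this last class, in which every two neighbourhoods ``cross'' with all four atoms nonempty. Here I would invoke the set system $\cF=\{N(s):s\in S_1\}$: if some $n$-element $W\subseteq T$ is shattered by $\cF$, then for each $i\le n$ choosing a member of $\cF$ meeting $W$ exactly in $\{w_i\}$ gives $\S_n\mat\S_n$ at once (shattering in fact yields $\S_n\antimat\S_n$ as well). Otherwise the Vapnik--Chervonenkis dimension of $\cF$ is below $n$, so by the Sauer--Shelah lemma $\abs{S_1}\le\sum_{i<n}\binom{\abs T}{i}$, forcing $\abs T$ itself to be large; one then sets up an induction — on $\abs T$, or on $\abs T$ together with the VC dimension — that finds the desired target graph inside a carefully chosen sub-instance with strictly fewer columns. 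Making this induction close, i.e.\ deciding which vertices of $T$ to delete while keeping enough vertices of $S$ with still-distinct, still-crossing traces, is the delicate point; it is essentially the content of the Ding--Oporowski--Oxley--Vertigan theorem, which is why we quote it rather than reprove it. As a side remark, throughout one must be mildly careful with indices so that the bipartite graph extracted is literally isomorphic to $\S_n\mat\S_n$, $\S_n\tri\S_n$, or $\S_n\antimat\S_n$ and not to one of these plus a bounded number of isolated vertices; this is handled by starting from a chain, matching, or shattered set of size $n+O(1)$ and discarding the extremal pieces.
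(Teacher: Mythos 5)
The paper does not prove this statement at all: it is imported verbatim as a theorem of Ding, Oporowski, Oxley and Vertigan, and is used as a black box. Measured against that, your write-up is not a proof either, and the gap is exactly where you say it is. Your preliminary reductions are fine (the bipartite-complement symmetry, the Ramsey step on the emptiness pattern of the four atoms, the extraction of $\S_n\tri\S_n$ from a chain, of $\S_n\mat\S_n$ from pairwise disjoint neighbourhoods, of $\S_n\antimat\S_n$ from the covering case via the complement, and of a matching from a shattered set), but the surviving case --- all pairs of neighbourhoods crossing, no large shattered set --- \emph{is} the theorem; everything you dispose of before it is the easy periphery. Deferring that case back to the very reference being proved means the argument is circular as a proof and redundant as a justification for the citation.

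Moreover, the one concrete idea you offer for the hard case, an induction on $\abs{T}$ (or on $\abs{T}$ together with VC dimension) after Sauer--Shelah forces $\abs{T}$ to be large, does not obviously close and I do not believe it can be made to close as stated: when you delete columns of $T$ to get a sub-instance with fewer columns, distinct vertices of $S$ may acquire identical traces, so the hypothesis ``no two vertices in $S$ have the same set of neighbours'' is not preserved, and after identifying twins you have no lower bound on the size of the surviving $S$; bounded VC dimension by itself gives only a polynomial bound on the number of traces, not the chain or matching structure you need. The standard proofs (in the cited paper, and the distinguishing-tree argument used elsewhere in the vertex-minor literature) instead build, from the distinctness of neighbourhoods, a long sequence $s_0,\dots,s_m\in S$ and $t_1,\dots,t_m\in T$ in which $t_i$ separates $s_{i-1}$ from later elements, and then apply Ramsey to the adjacency pattern of the pairs $(s_i,t_j)$, $i<j$, to extract one of the three configurations. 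Some argument of this kind is what is missing from your proposal; without it, what you have is a correct reduction of the theorem to its core case, not a proof.
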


	In several places of the proof, when we obtain $H_1\mat H_2$ or $H_1\antimat H_2$
	where $H_1, H_2\in \{\S_n, \K_n\}$, 
	we want to make each part an independent set.
	The following lemma describes how to reduce each of them
        to $\S_{n'}\mat \S_{n'}$ for some $n'$.

	\begin{lemma}\label{lem:tomatching}
Let $n$ be an integer.
\begin{enumerate}[(1)]
	\item If $n\ge 2$, then $\K_n\mat\S_n$ has a vertex-minor isomorphic to $\S_{n-1}\mat\S_{n-1}$.
	\item If $n\ge 3$, then $\K_n\mat\K_n$ has a vertex-minor isomorphic to $\S_{n-2}\mat\S_{n-2}$.
	\item If $n\ge 3$, then $\S_n\antimat\S_n$ has a vertex-minor isomorphic to $\S_{n-2}\mat\S_{n-2}$.
	\item If $n\ge 3$, then $\K_n\antimat\S_n$ has a vertex-minor isomorphic to $\S_{n-2}\mat\S_{n-2}$.
	\item If $n\ge 2$, then $\K_n\antimat\K_n$ has a vertex-minor isomorphic to $\S_{n-1}\mat\S_{n-1}$.
\end{enumerate}
\end{lemma}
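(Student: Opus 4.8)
The plan is to treat all five parts with one recipe: from the given graph on $2n$ vertices, apply a single local complementation (or, in part~(3), a single pivot, which is a composition of three local complementations), then delete two vertices (four in part~(3)), and read off the result. Parts~(2) and~(4) are then closed by invoking part~(1), using that the vertex-minor relation is transitive. In each case one must keep careful track of which pairs of vertices change adjacency.

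I would do part~(1) first. Write $\K_n\mat\S_n$ as a clique $v_1,\ldots,v_n$ and an independent set $w_1,\ldots,w_n$ with the edges $v_iw_i$ the only cross edges. A local complementation at $v_1$ turns $\{v_2,\ldots,v_n\}$ into an independent set and joins $w_1$ to each of $v_2,\ldots,v_n$, while leaving the edges $v_iw_i$ $(i\ge2)$ and all non-edges among $w_2,\ldots,w_n$ untouched; hence deleting $v_1$ and $w_1$ from $(\K_n\mat\S_n)*v_1$ leaves exactly the two independent sets $\{v_2,\ldots,v_n\}$, $\{w_2,\ldots,w_n\}$ together with the matching $\{v_iw_i:i\ge2\}$, i.e.\ the graph $\S_{n-1}\mat\S_{n-1}$. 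Part~(5) runs the same way: in $\K_n\antimat\K_n$ the vertex $v_1$ is adjacent to everything but $w_1$, so a local complementation at $v_1$ makes both $\{v_2,\ldots,v_n\}$ and $\{w_2,\ldots,w_n\}$ independent and replaces each edge $v_iw_j$ $(i\ne j,\ i,j\ge2)$ by the edge $v_iw_i$; deleting $v_1$ and $w_1$ then leaves $\S_{n-1}\mat\S_{n-1}$.

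For parts~(2) and~(4) I would show that a local complementation at $v_1$ followed by deleting $v_1$ and $w_1$ produces, in both cases, the graph $\S_{n-1}\mat\K_{n-1}\cong\K_{n-1}\mat\S_{n-1}$. In $\K_n\mat\K_n$, the local complementation at $v_1$ breaks the $v$-clique while keeping $\{w_2,\ldots,w_n\}$ a clique and leaving $v_iw_i$ as the only surviving cross edges among $v_2,\ldots,v_n,w_2,\ldots,w_n$; in $\K_n\antimat\S_n$, where $N(v_1)=\{v_2,\ldots,v_n,w_2,\ldots,w_n\}$, the local complementation at $v_1$ breaks the $v$-clique, turns $\{w_2,\ldots,w_n\}$ into a clique, removes the edges $v_iw_j$ $(i\ne j)$, and adds the edges $v_iw_i$. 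Since $n\ge3$, part~(1) applied to $\K_{n-1}\mat\S_{n-1}$ yields a vertex-minor isomorphic to $\S_{n-2}\mat\S_{n-2}$. For part~(3) there is no clique to destroy, so I would pivot instead: writing $\S_n\antimat\S_n$ as independent sets $v_1,\ldots,v_n$, $w_1,\ldots,w_n$ with $v_i\sim w_j$ iff $i\ne j$, one computes for the edge $v_1w_2$ that $N(v_1)\setminus(N(w_2)\cup\{w_2\})=\{w_3,\ldots,w_n\}$, $N(w_2)\setminus(N(v_1)\cup\{v_1\})=\{v_3,\ldots,v_n\}$, and $N(v_1)\cap N(w_2)=\emptyset$; hence pivoting $v_1w_2$ flips exactly the pairs $v_iw_j$ with $i,j\ge3$ (and renames $v_1,w_2$), so the induced subgraph of $(\S_n\antimat\S_n)\pivot v_1w_2$ on $\{v_3,\ldots,v_n,w_3,\ldots,w_n\}$ satisfies $v_i\sim w_j$ iff $i=j$, i.e.\ it is $\S_{n-2}\mat\S_{n-2}$. (Alternatively, local complementations at $v_1$ and then $w_1$ followed by deleting $v_1$ and $w_1$ turn $\S_n\antimat\S_n$ into $\K_{n-1}\antimat\K_{n-1}$, and part~(5) concludes.)

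The main obstacle is nothing more than routine bookkeeping, but the $\antimat$ cases deserve care: there the chosen vertex $v_1$ has $2(n-1)$ neighbors spanning both sides, so a single local complementation simultaneously destroys two cliques or independent sets, deletes the off-diagonal cross edges $v_iw_j$ $(i\ne j)$, and, crucially, creates the diagonal edges $v_iw_i$; each of these effects must be verified, and one checks that deleting $v_1$ and $w_1$ leaves precisely the intended graph. The pivot in part~(3) is the most delicate step, since one must identify the three neighborhood sets governing the pivot correctly before reading off its effect on the surviving vertices.
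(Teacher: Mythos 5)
Your proposal is correct and follows essentially the same strategy as the paper: a single local complementation (or pivot, for part (3)) followed by deleting the affected vertices, with parts (2) and (4) reduced to the intermediate graph $\S_{n-1}\mat\K_{n-1}$ and closed by part (1). The only cosmetic differences are the order of deletion versus complementation and that the paper handles part (2) by two direct local complementations rather than via part (1); all your adjacency computations check out.
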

\begin{proof}
(1) Let $V(\K_n)=\{v_i:1\le i\le n\}$ and $V(\S_n)=\{w_i:1\le i\le n\}$.
	The graph $(\K_n\mat\S_n-w_1)*v_1-v_1$ is isomorphic to $\S_{n-1}\mat\S_{n-1}$.
     
    (2) Let $\{v_i:1\le i\le n\}$ and $\{w_i:1\le i\le n\}$ be the vertex sets of two copies of $\K_n$. 
    The graph $((\K_n\mat\K_n-\{v_1, w_2\})*v_2*w_1)-\{v_2, w_1\}$ is isomorphic to $\S_{n-2}\mat\S_{n-2}$.
    
    (3) Let $\{v_i:1\le i\le n\}$ and $\{w_i:1\le i\le n\}$ be the vertex sets of two copies of $\S_n$. 
    The graph $((\S_n\antimat\S_n-\{v_1, w_2\})\pivot v_2w_1)-\{v_2, w_1\}$ is isomorphic to $\S_{n-2}\mat\S_{n-2}$.
    
    (4) Let $V(\K_n)=\{v_i:1\le i\le n\}$ and $V(\S_n)=\{w_i:1\le i\le n\}$.
	The graph $(\K_n\antimat\S_n-w_1)*v_1-v_1$ is isomorphic to $\S_{n-1}\mat\K_{n-1}$.
	Thus, by (1), it contains a vertex-minor isomorphic to $\S_{n-2}\mat\S_{n-2}$.

    (5) Let $\{v_i:1\le i\le n\}$ and $\{w_i:1\le i\le n\}$ be the vertex sets of two copies of $\K_n$. 
    The graph $(\K_n\antimat\K_n-w_1)*v_1-v_1$ is isomorphic to $\S_{n-1}\mat\S_{n-1}$.
   \end{proof}
    
    From $H_1\tri H_2$
	with $H_1, H_2\in \{\S_n, \K_n\}$, 
	we can obtain a long induced path as a vertex-minor.
	So, if $n$ is sufficiently large, then this directly gives us $mP_{k+1}$ for some large $m$.
     
\begin{lemma}[Kwon and Oum~\cite{KwonO2014}]\label{lem:lengthonecase} 
Let $n$ be a positive integer.
\begin{enumerate}[(1)]
	\item $\S_{n}\tri\S_{n}$ is locally equivalent to $P_{2n}$.
	\item $\K_{n}\tri\S_{n}$ is locally equivalent to $P_{2n}$.
	\item If $n\ge 2$, then $\K_{n}\tri\K_{n}$ has a vertex-minor isomorphic to $P_{2n-2}$.
\end{enumerate}
\begin{proof}
 (1) and (2) are proved in \cite{KwonO2014}. To prove (3), 
 let $\{v_i:1\le i\le n\}$ and $\{w_i:1\le i\le n\}$ be the vertex sets of two copies of $\K_n$, where $v_i$ is adjacent to $w_j$ if and only if $i\ge j$. 
 Then $(\K_{n}\tri\K_{n}-w_1)*v_1-v_1$ is isomorphic to $\S_{n-1}\tri\K_{n-1}$. Thus, the result follows from (2).
\end{proof}
\end{lemma}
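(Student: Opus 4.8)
The plan is to prove (1) via a linear-algebraic description of pivoting on bipartite graphs, and then to deduce (2) and (3) from (1) by a single local complementation together with a vertex deletion; parts (1) and (2) are the ones recorded by Kwon and Oum~\cite{KwonO2014}. For (1), observe that $\S_n\tri\S_n$ is bipartite with bipartition $(\{v_1,\dots,v_n\},\{w_1,\dots,w_n\})$ and that, in this ordering, its bipartite adjacency matrix is the $n\times n$ lower-triangular all-ones matrix $L_n$, whose $(i,j)$-entry is $1$ precisely when $i\ge j$. On the other hand, $P_{2n}$ is bipartite with the classes of odd- and even-indexed vertices, and listing each class in the order along the path makes its bipartite adjacency matrix the bidiagonal $0$-$1$ matrix $B_n$ with $1$'s exactly on the main diagonal and the first subdiagonal. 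Writing $B_n=I+Z$ with $Z$ nilpotent, over the binary field we have
\[
  B_n^{-1}=I+Z+Z^2+\cdots+Z^{n-1}=L_n.
\]
Now I would invoke the standard fact that pivoting a single edge of a bipartite graph performs a principal pivot transform, over the binary field, of its bipartite adjacency matrix (see Oum~\cite{Oum2004}); iterating such pivots transforms $P_{2n}$ into the bipartite graph whose adjacency matrix is $B_n^{-1}=L_n$ (with the two sides of the bipartition interchanged), which is exactly $\S_n\tri\S_n$. Concretely, pivoting the edges $p_3p_4,p_5p_6,\dots,p_{2n-3}p_{2n-2}$ of $P_{2n}=p_1p_2\cdots p_{2n}$ in this order produces $\S_n\tri\S_n$, which one can also verify directly by induction on $n$.

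For (2), I would check that $(\K_n\tri\S_n)*v_1$ is isomorphic to $\S_n\tri\S_n$, which reduces (2) to (1). In $\K_n\tri\S_n$ the set $N(v_1)=\{v_2,\dots,v_n,w_1\}$ induces a clique, so local complementation at $v_1$ makes this set independent and leaves $w_1$ a pendant at $v_1$, while every adjacency disjoint from $N(v_1)$ is unchanged. Inspecting the result, the set $\{v_2,\dots,v_n\}\cup\{w_2,\dots,w_n\}$ induces a copy of $\S_{n-1}\tri\S_{n-1}$, the vertex $v_1$ is a false twin of a vertex of that copy whose neighborhood is an entire part of its bipartition, and $w_1$ is a pendant at $v_1$. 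This is precisely how $\S_n\tri\S_n$ is obtained from $\S_{n-1}\tri\S_{n-1}$ (using that $\S_{n-1}\tri\S_{n-1}$ is isomorphic to the graph obtained by interchanging its two parts), so $(\K_n\tri\S_n)*v_1\cong\S_n\tri\S_n$ and (2) follows.

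For (3) with $n\ge 2$, I would verify that $(\K_n\tri\K_n-w_1)*v_1-v_1$ is isomorphic to $\S_{n-1}\tri\K_{n-1}$: deleting $w_1$ leaves $v_1$ adjacent only to the clique $\{v_2,\dots,v_n\}$, local complementation at $v_1$ turns that clique into an independent set, and deleting $v_1$ leaves the clique $\{w_2,\dots,w_n\}$, the independent set $\{v_2,\dots,v_n\}$, and exactly the edges $v_iw_j$ with $i\ge j$ between them. Since $\S_{n-1}\tri\K_{n-1}\cong\K_{n-1}\tri\S_{n-1}$ (reverse both vertex orderings and interchange the two parts), applying (2) with parameter $n-1$ shows that this graph, and hence $\K_n\tri\K_n$, has a vertex-minor isomorphic to $P_{2(n-1)}=P_{2n-2}$.

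The main obstacle is the linear-algebraic step in (1): one must make sure that ``pivoting the whole bipartite graph'' is genuinely realized by a sequence of single-edge pivots, equivalently that over the binary field a nonsingular matrix can be reduced by a sequence of entry-pivots all of whose intermediate Schur complements stay nonsingular. This holds because a nonsingular matrix has a nonzero entry and the Schur complement at a nonzero entry of a nonsingular matrix is again nonsingular, so one can keep pivoting; once this is in place, parts (2) and (3) reduce to checking small local-complementation identities and keeping track of vertex orderings.
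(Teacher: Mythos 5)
Your proposal is correct, and on the one part the paper actually proves — item (3) — your argument coincides with the paper's: you verify the same identity, that $(\K_n\tri\K_n-w_1)*v_1-v_1$ is isomorphic to $\S_{n-1}\tri\K_{n-1}$, and then reduce to (2) via the isomorphism $\S_{n-1}\tri\K_{n-1}\cong\K_{n-1}\tri\S_{n-1}$ (the paper leaves that last relabeling implicit, but it is the same step). The difference is in (1) and (2): the paper simply cites Kwon and Oum for these, whereas you reprove them. Your proofs are sound. For (2), the observation that $(\K_n\tri\S_n)*v_1\cong\S_n\tri\S_n$ checks out ($N(v_1)=\{v_2,\dots,v_n,w_1\}$ is a clique, and after complementing it one gets the chain graph with $v_1$ playing the role of the full-degree vertex and $w_1$ the pendant), and it gives a cleaner reduction of (2) to (1) than re-deriving both from scratch. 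For (1), your route through the bipartite adjacency matrix — $B_n^{-1}=L_n$ over GF(2), together with the fact that an edge pivot is a principal pivot transform and that a nonsingular matrix can be fully pivoted one entry at a time because Schur complements of nonsingular matrices at nonzero entries remain nonsingular — is a legitimate and self-contained argument (this is essentially the binary-matroid basis-exchange viewpoint from Oum's rank-width paper), and your explicit pivot sequence $p_3p_4,p_5p_6,\dots,p_{2n-3}p_{2n-2}$ is consistent with small cases. What your approach buys is independence from the cited reference; what the paper's choice buys is brevity, since (1) and (2) are exactly the statements recorded in \cite{KwonO2014}. No gaps.
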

   
   We will prove the backward direction of Theorem~\ref{thm:main4} by induction on $k$.
   In the procedure, we find a vertex-minor containing a vertex set $S$ which induces a subgraph isomorphic to 
   $mH$ for some connected graph $H$ on $k$ vertices.
   Generally, we meet two situations: the cut-rank of $S$ is large or small.
   In the next lemma, we prove that if the cut-rank of $S$ is large, then 
   we can directly find a vertex-minor isomorphic to the disjoint union of many copies of some connected graph on $k+1$ vertices.
   If the cut-rank is small, then we will recursively find another such set after excluding $S$.

   \begin{lemma}\label{lem:largerankmatching}
     For positive integers $k$ and $n$,
     there exists a positive integer $m=f_1(k,n)$
     such that
     if a graph $G$ admits a set $W=\{w_1,\ldots,w_m\}$
     that is a clique or an independent set
     satisfying the following two properties,
     then $G$ has a vertex-minor isomorphic to
     $nH'$ for some connected graph $H'$ on $k+1$ vertices.
     \begin{enumerate}[(i)]
     \item    $G-W= m H$
     for some connected graph $H$ on $k$ vertices.
   \item 
     For some vertex $v$ of $H$
     and its copies $v_1$, $v_2$, $\ldots$, $v_m$  in $mH$,
     $v_i$ is adjacent to $w_j$ if and only if $i=j$.
     (In other words, the subgraph induced by $W\cup \{v_1,v_2,\ldots,v_m\}$ is isomorphic to $K_m\mat \S_m$ or $\S_m\mat \S_m$.)
     \end{enumerate}
   \end{lemma}
   \begin{proof}
     Let $H_i$ be the $i$-th copy of $H$ in $G-W$.
     We fix an isomorphism from $H$ to $H_i$ and isomorphisms between copies of $H$
     so that these isomorphisms are compatible.

     Assume that $m>2^{k-1}(m_1-1)$.
     For each $w_i$, there are at most $2^{k-1}$ possible sets of neighbors
     in $H_i$. So there exists a subset $W_1$ of $W$ with $\abs{W_1}= m_1$
     such that 
     the set of all neighbors of each $w_i\in W_1$ in $H_i$
     is identical up to isomorphisms between copies of $H$.
     
     Assume that $m_1\ge R(m_2;(2^{k-1})^2)$.
     For a vertex $w_i$ and $j\neq i$, 
     there are $2^{k-1}$ possible ways of having edges between
     the $j$-th copy of $H-v$ and $w_i$.
     By applying Ramsey's theorem,
     we deduce that there exists a subset $W_2\subseteq W_1$ of size $m_2$
     such that for all $i<j$ with $w_i, w_j\in W_2$,
     the set of all neighbors of $w_i$ in $H_j$
     is identical up to isomorphisms between copies of $H$
     and the set of all neighbors of $w_j$ in $H_i$
     is identical up to isomorphisms between copies of $H$.

     Assume that \[m_2\ge \max\left(\left\lceil\frac{(k+2)n-1}2\right\rceil+1,n+3\right).\]
     Suppose that there exist $i_1<i_2<i_3$ such that
     $w_{i_1},w_{i_2},w_{i_3}\in W_2$ and 
     there exists a vertex $u$ of $H$ so that
     exactly one of the copies of $u$ in $H_{i_1}$ and $H_{i_3}$
     is adjacent to $w_{i_2}$. 
     Then $G$ contains $\S_{m_2-1}\tri \S_{m_2-1}$ 
     or $\S_{m_2-1}\tri K_{m_2-1}$ as an induced subgraph.
     By Lemma~\ref{lem:lengthonecase}, 
     $G$ has a vertex-minor isomorphic to $P_{(k+2)n-1}$
     and therefore $G$ has $nP_{k+1}$ as a vertex-minor.

     Thus we may assume that there are no such $i_1<i_2<i_3$. 
     Since $m_2\ge 3$, 
     for all $i\neq j$ with $w_i, w_j\in W_2$,
     the set of all neighbors of $w_i$ in $H_j$
     is identical up to isomorphisms between copies of $H$.

     Suppose that $w_i\in W_2$ has no neighbors in $H_j$ when $j\neq i$
     and $w_j\in W_2$.
     If $W_2$ is an independent set, then
     clearly $G$ has an induced subgraph isomorphic to $m_2 H'$ for
     some connected graph $H'$ on $k+1$ vertices.
     If $W_2$ is a clique, then
     for some $w_i\in W_2$, $G*w_i$
     contains an induced subgraph isomorphic to $(m_2-1)H'$
     for some connected graph $H'$ on $k+1$ vertices.

     Thus, we may assume that $w_i\in W_2$ has at least one neighbor $u_j$
     in $H_j$ for some $j\neq i$ with $w_j\in W_2$.
     Let $G'=G\pivot w_iu_j-V(H_i)-V(H_j)-w_i-w_j$.
     If $W_2$ is an independent set, then
     $G'$ has an induced subgraph isomorphic to $(m_2-2)H'$ for
     some connected graph $H'$ on $k+1$ vertices.
     This is because, by (ii), in $G$, $v_\ell$
     is adjacent to $w_\ell$
     and non-adjacent to $w_i$ and $u_j$ 
     for all $\ell$ with $w_\ell\in W_2$, $\ell\neq i,j$.

     If $W_2$ is a clique, then
     let $w_{i_1}\in W_2\setminus\{w_i,w_j\}$
     and $G''=G'*w_{i_1}-V(H_{i_1})$.
     Then $G''$ contains an induced subgraph isomorphic to $(m_2-3)H'$
     for some connected graph $H'$ on $k+1$ vertices,
     again by (ii).

     So we can take \[
       f_1(k,n):= 2^{k-1}\left(R\bigl(
\max(\lceil {\textstyle \frac{(k+2)n-1}{2}}\rceil+1,n+3)     
;\allowbreak (2^{k-1})^2\bigr)-1\right)+1.\qedhere\]
   \end{proof}
     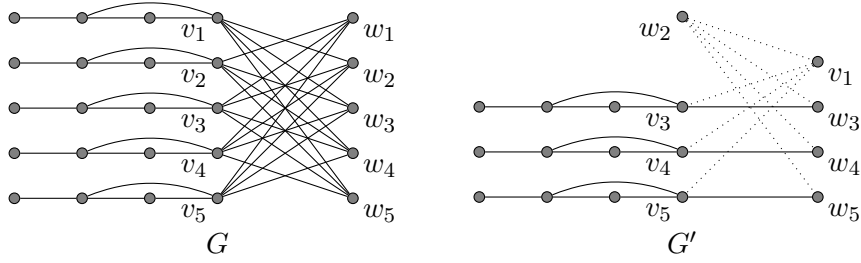
\begin{figure}
 \tikzstyle{v}=[circle,draw,fill=black!50,inner sep=0pt,minimum width=4pt]
  \centering
\begin{tikzpicture}[xscale=0.6,yscale=0.4]
      \foreach \x in {1,...,5} {
        \node [v]  (v\x) at(0,1.5*-\x){};
        \node [v]  (w\x) at (3,1.5*-\x){};
        \draw (0,1.5*-\x-0.5) node [left] {$v_\x$};
        \draw (3,1.5*-\x-0.5) node [right] {$w_\x$};
      
        \node [v]  (a\x) at (-1.5,1.5*-\x){};
        \node [v]  (b\x) at (-3,1.5*-\x){};
        \node [v]  (c\x) at (-4.5,1.5*-\x){};
     
      }
      \foreach \x in {1,...,5}
      {
      \draw(c\x)--(b\x)--(a\x)--(v\x);
		\draw (b\x) [in=150, out=30] to (v\x);
		}
            \foreach \x in {1,...,5} 
      \foreach \y in {1,...,5} {
       \ifnum \x=\y  \else      \draw (v\x)--(w\y);       \fi 
      }
   \draw (0,-9) node {$G$};
    \end{tikzpicture}
\qquad
\begin{tikzpicture}[xscale=0.6,yscale=0.4]
      \foreach \x in {3,...,5} {
        \node [v]  (v\x) at(0,1.5*-\x){};
        \node [v]  (w\x) at (3,1.5*-\x){};
        \draw (0,1.5*-\x-0.5) node [left] {$v_\x$};
        \draw (3,1.5*-\x-0.5) node [right] {$w_\x$};

      }
      
        \node [v]  (v1) at(0,-1.5){};
       \node [v]  (w2) at (3,-3){};
      \draw (0,-1.5-0.5) node [left] {$w_2$};
        \draw (3,-3-0.5) node [right] {$v_1$};
      \draw[dotted] (v1)--(w2);
      
      \foreach \x in {3,...,5}
      {
      \draw[dotted] (v1)--(w\x);
      \draw[dotted] (w2)--(v\x);
       \node [v]  (a\x) at (-1.5,1.5*-\x){};
        \node [v]  (b\x) at (-3,1.5*-\x){};
        \node [v]  (c\x) at (-4.5,1.5*-\x){};

      \draw(c\x)--(b\x)--(a\x)--(v\x);
		\draw (b\x) [in=150, out=30] to (v\x);
		}
     
      \foreach \x in {3,...,5} 
      \foreach \y in {3,...,5} {
       \ifnum \x=\y  \draw (v\x)--(w\y);       \fi    
      }
      \draw (0,-9) node {$G'$};
       
    \end{tikzpicture}
  \caption{Obtaining $G' =
        (G\pivot v_1w_2)-V(H_1)-V(H_2)-w_1-w_2$ from $G$ in the proof of Lemma~\ref{lem:largerankantimatching}.}
  \label{fig:g1tog2}
\end{figure}
   
   \begin{lemma}\label{lem:largerankantimatching}
     For positive integers $k$ and $n$,
     there exists a positive integer $m=f_2(k,n)$
     such that
     if a graph $G$ admits a set $W=\{w_1,\ldots,w_m\}$
     that is a clique or an independent set
     satisfying the following two properties,
     then $G$ has a vertex-minor isomorphic to
     $nH'$ for some connected graph $H'$ on $k+1$ vertices.
     \begin{enumerate}[(i)]
     \item    $G-W= m H$
     for some connected graph $H$ on $k$ vertices.
   \item 
     For some vertex $v$ of $H$
     and its copies $v_1$, $v_2$, $\ldots$, $v_m$  in $mH$,
     $v_i$ is adjacent to $w_j$ if and only if $i\neq j$.
     (In other words, the subgraph induced by $W\cup \{v_1,v_2,\ldots,v_m\}$ is isomorphic to $\S_m\antimat \S_m$ or $K_m\antimat \S_m$.)     
     \end{enumerate}
   \end{lemma}
   \begin{proof}
     Let $f_2(k,n):=f_1(k,n)+2$ for the function $f_1$ in Lemma~\ref{lem:largerankmatching}.
     Let $G'=(G\pivot v_1w_2)-V(H_1)-V(H_2)-w_1-w_2$
     where $H_1$, $H_2$ are the first and second copies of $H$.
     Then
     $G'-(W\setminus \{w_1,w_2\})$ is isomorphic to $(m-2)H$
     and 
     $G'$ satisfies the condition for Lemma~\ref{lem:largerankmatching}.
     See Figure~\ref{fig:g1tog2} for an illustration.
   \end{proof}
   
	\begin{lemma}\label{lem:largerank}
	For positive integers $k$ and $n$, there exists an integer $N:=N(k,n)$ with the following property. Let $H$ be a connected graph on $k$ vertices, and
	$G$ be a graph and $S\subseteq V(G)$ such that $G[S]$ is isomorphic to $qH$ for some integer $q$ and $\cutrk_G(S)\ge N$. 
	Then $G$ contains a vertex-minor isomorphic to $nH'$ for some connected graph $H'$ on $k+1$ vertices.
	\end{lemma}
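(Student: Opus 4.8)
The plan is to turn the hypothesis $\cutrk_G(S)\ge N$ into a large twin‑free bipartite configuration attached to a single vertex type of $H$, apply the Ramsey‑type Theorem~\ref{thm:largebipartite} to it, and then feed the outcome into one of Lemmas~\ref{lem:largerankmatching}, \ref{lem:largerankantimatching}, or \ref{lem:lengthonecase}. Let $T=V(G)\setminus S$ and let $M$ be the $S\times T$ matrix over $\mathbb{F}_2$ defining $\cutrk_G(S)$, so $\rank M=\cutrk_G(S)\ge N$. Write $S=V(H_1)\cup\cdots\cup V(H_q)$ where $H_1,\dots,H_q$ are the copies of $H$ in $G[S]$. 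Partition the rows of $M$ into $k$ classes according to which vertex of $H$ the corresponding vertex of $S$ is a copy of. Since $\rank M$ is at most the sum of the ranks of the submatrices given by these classes, some vertex $v\in V(H)$ has the property that the submatrix $M^{(v)}$ on the copies of $v$ (one per $H_i$) has rank at least $N/k$. Fix such a $v$; then, choosing $N$ large enough, there are $s:=f(m_0)$ indices $i_1,\dots,i_s$ (where $f$ is the function of Theorem~\ref{thm:largebipartite} and $m_0$ will be fixed below) such that the copies $v_1,\dots,v_s$ of $v$ in $H_{i_1},\dots,H_{i_s}$ have pairwise distinct neighbourhoods in $T$ (linearly independent $0$‑$1$ vectors are distinct).

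Next I would apply Theorem~\ref{thm:largebipartite} to the bipartite graph on parts $\{v_1,\dots,v_s\}$ and $T$ consisting of the edges of $G$ between these sets. Since no two of $v_1,\dots,v_s$ have the same neighbourhood and $s\ge f(m_0)$, there are $m_0$‑element subsets on which this bipartite graph is isomorphic to $\S_{m_0}\mat\S_{m_0}$, $\S_{m_0}\tri\S_{m_0}$, or $\S_{m_0}\antimat\S_{m_0}$; after reindexing, write these as $v_1,\dots,v_{m_0}$ and $b_1,\dots,b_{m_0}\in T$ carrying the standard pattern. Then I would apply the classical Ramsey theorem to $\{b_1,\dots,b_{m_0}\}$ to pass to an index set of size $m_1$ on which $\{b_j\}$ is a clique or an independent set (requiring $m_0\ge R(m_1;2)$); restricting both sides to the same index set preserves the matching pattern, the anti‑matching pattern, and the chain pattern (with the induced order). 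Relabel so that $v_1,\dots,v_{m_1}$ and $b_1,\dots,b_{m_1}$ carry the pattern and $\{b_1,\dots,b_{m_1}\}$ is homogeneous, and let $H'_j$ be the copy of $H$ in $G[S]$ containing $v_j$; these copies are pairwise disjoint since $v_1,\dots,v_{m_1}$ lie in distinct components of $G[S]$.

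Now split into the three cases. In the matching case ($v_j$ adjacent to $b_\ell$ iff $j=\ell$), the induced subgraph $G'=G\bigl[\bigcup_{j=1}^{m_1}V(H'_j)\cup\{b_1,\dots,b_{m_1}\}\bigr]$ is a vertex‑minor of $G$ for which $G'-W=m_1H$ with $W=\{b_1,\dots,b_{m_1}\}$ a clique or independent set, and the copies $v_1,\dots,v_{m_1}$ of $v$ realize the matching; hence Lemma~\ref{lem:largerankmatching} (with $m_1\ge f_1(k,n)$) gives a vertex‑minor isomorphic to $nH'$ for some connected graph $H'$ on $k+1$ vertices. The anti‑matching case is identical, using Lemma~\ref{lem:largerankantimatching} with $m_1\ge f_2(k,n)$. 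In the chain case I would discard the copies of $H$ entirely: the induced subgraph $G[\{v_1,\dots,v_{m_1}\}\cup\{b_1,\dots,b_{m_1}\}]$ has $\{v_j\}$ independent and $\{b_\ell\}$ homogeneous, so it is isomorphic to $\S_{m_1}\tri\S_{m_1}$ or to $\S_{m_1}\tri\K_{m_1}$, and the latter is isomorphic to $\K_{m_1}\tri\S_{m_1}$ by reversing both orderings; in either case Lemma~\ref{lem:lengthonecase} yields $P_{2m_1}$ as a vertex‑minor of $G$, and a path on $2m_1$ vertices contains $n$ pairwise non‑adjacent induced copies of $P_{k+1}$ once $2m_1\ge(k+2)n$, so $G$ has $nP_{k+1}$ as a vertex‑minor with $H'=P_{k+1}$.

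To close the argument one sets $m_1:=\max\bigl(f_2(k,n),\lceil(k+2)n/2\rceil\bigr)$ (recall $f_2=f_1+2$), then $m_0:=R(m_1;2)$, then $s:=f(m_0)$, and finally $N:=ks$, so that $\rank M^{(v)}\ge N/k\ge s$ as needed. The main obstacle I expect is bookkeeping rather than any single hard idea: one must verify that after the successive restrictions and relabellings the induced subgraph handed to Lemma~\ref{lem:largerankmatching} (resp. Lemma~\ref{lem:largerankantimatching}) really is the disjoint union of $m_1$ copies of $H$ together with $W$, with the matched vertex a copy of $v$ — which holds because we only ever take induced subgraphs on a union of whole components $H'_j$ of $G[S]$ together with vertices of $T$ — and that the chain pattern survives the Ramsey restriction. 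Everything else is pigeonhole over the $k$ vertex types and over neighbourhood patterns, plus two invocations of Ramsey‑type theorems, and it yields an explicit (though large) bound $N(k,n)=k\,f\!\bigl(R(\max(f_2(k,n),\lceil(k+2)n/2\rceil);2)\bigr)$.
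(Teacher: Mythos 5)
Your proposal is correct and follows essentially the same route as the paper: extract an unavoidable bipartite pattern (Theorem~\ref{thm:largebipartite}) between copies of a single vertex $v$ of $H$ and a Ramsey-homogeneous set outside $S$, then dispatch the chain case via Lemma~\ref{lem:lengthonecase} and the (anti)matching cases via Lemmas~\ref{lem:largerankmatching} and~\ref{lem:largerankantimatching}. The only difference is cosmetic ordering — you pigeonhole over the $k$ vertex types at the matrix-rank level up front (using subadditivity of rank over a row partition), whereas the paper applies the bipartite theorem to all of $S$ first and pigeonholes on the resulting set $A_2$ afterwards — and both orderings are sound.
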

\begin{proof}
  Let $f$ be the function defined in Theorem~\ref{thm:largebipartite}.
  Let $f_1$, $f_2$ be the functions defined in Lemmas~\ref{lem:largerankmatching} and \ref{lem:largerankantimatching}.
	We define that 
	\begin{align*}
          n_3(k,n)&:=\max(f_1(k,n),f_2(k,n), \lceil {\textstyle\frac{(k+2)n-1}{2}}\rceil ),\\
                    n_2(k,n)&:=
  \begin{cases}
    (k-1)n_3(k,n)+1  &\text{if }k>1,
    \\
    \max(n+2, \lceil (3n+1)/2\rceil) &\text{if }k=1,
  \end{cases}\\
               n_1(k,n)&:=R\left( n_2(k,n); 2 \right),\\
          N(k,n)&:=f(n_1(k,n)).
\end{align*}
	We shortly denote $n_1(k,n)$, $n_2(k,n)$, $n_3(k,n)$, $N(k,n)$ as $n_1$, $n_2$, $n_3$, $N$ respectively.
	
	Choose $B\subseteq V(G)\setminus S$ such that
	$\abs{B}=N$ and $\rank \left( A(G)[S, B] \right)=N$. 

	Observe that two distinct vertices in $B$ have distinct sets of neighbors in $S$.
	Since $N=f(n_1)$, by Theorem~\ref{thm:largebipartite}, there exist $A_1\subseteq S$ and $B_1\subseteq B$ with $\abs{A_1}=\abs{B_1}=n_1$ such that
	 $G[A_1, B_1]$ is isomorphic to $\S_{n_1}\mat\S_{n_1}$, $\S_{n_1}\tri \S_{n_1}$, or
     $\S_{n_1}\antimat\S_{n_1}$.

     Since $n_1=R(n_2;2)$, by Ramsey's theorem, there exists $B_2\subseteq B_1$ such that $\abs{B_2}= n_2$ and $B_2$ is a clique or an independent set.
     Let $A_2\subseteq A_1$ be the set of vertices matched with vertices in $B_2$ in the subgraph $G[A_1, B_1]$.
	Thus, $G[A_2, B_2]$ is isomorphic to 
    $\S_{n_2}\mat\S_{n_2}$, $\S_{n_2}\tri \S_{n_2}$, or
     $\S_{n_2}\antimat\S_{n_2}$.
     
      If $k=1$,     
     then by Lemma~\ref{lem:tomatching} or \ref{lem:lengthonecase},
     $G[A_2\cup B_2]$ contains a vertex-minor isomorphic to $\S_n\mat \S_n$, because $n_2\ge n+2$, $n_2\ge (3n+1)/2$,
     and $P_{3n-1}$ has $\S_n\mat\S_n$ as an induced subgraph.
   So, we may assume that $k\ge 2$.

 	Observe that $H$ has a vertex $v$ such that $A_2$ has at least $\lceil n_2/k\rceil=n_3$ copies of $v$.
     Let $A_3$ be a set of $n_3$ copies of $v$ in $A_2$, and $B_3\subseteq B_2$ be the set of vertices matched with vertices in $A_3$ in the subgraph $G[A_2, B_2]$.
     Let $\mathcal{C}$ be the set of components of $G[S]$ containing a vertex in $A_3$.
     Clearly, we have 
     \begin{itemize}
     \item $\abs{\mathcal{C}}= n_3$, 
     \item $G[A_3, B_3]$ is isomorphic to
    $\S_{n_3}\mat\S_{n_3}$, $\S_{n_3}\tri \S_{n_3}$, or
     $\S_{n_3}\antimat\S_{n_3}$, 
     \item $A_3$ is an independent set,
     \item $B_3$ is a clique or an independent set.
     \end{itemize}

     If $G[A_3, B_3]$ is isomorphic to $\S_{n_3}\tri \S_{n_3}$, 
     then $G[A_3\cup B_3]$ is isomorphic to $\S_{n_3}\tri \S_{n_3}$ or $\S_{n_3}\tri K_{n_3}$, 
     and thus by Lemma~\ref{lem:lengthonecase}, 
     it is locally equivalent to $P_{2n_3}$.
     As $2n_3\ge (k+2)n-1$,  
     $P_{2n_3}$ contains an induced subgraph isomorphic to $nP_{k+1}$.
     Therefore, we may assume $G[A_3, B_3]$ is isomorphic to 
     $\S_{n_3}\mat\S_{n_3}$ or
     $\S_{n_3}\antimat\S_{n_3}$.
     By Lemmas~\ref{lem:largerankmatching} and \ref{lem:largerankantimatching},
     we deduce that $G$ has a vertex-minor isomorphic to $nH'$
     for some connected graph $H'$ on $k+1$ vertices.
\end{proof}

From now on, our main focus is to deal with the case that 
the cut-rank of $S$ is small, where $S$ is the vertex set inducing the disjoint union of many copies of a connected graph $H$.

\begin{lemma}\label{lem:rowtwins}
  Let $k$ and $n$ be positive integers and
  let $\ell=k2^{k(N(k,n)-1)}+1$ for the function $N$ in Lemma~\ref{lem:largerank}.
  Let $H$ be a connected graph on $k$ vertices.
  If $G$ has an induced subgraph isomorphic to $\ell H$,
  then at least one of the following holds.
  \begin{enumerate}[(i)]
  \item $G$ has a vertex-minor isomorphic to $nH'$ for some connected graph $H'$ on
    $k+1$ vertices.
  \item There exists $A\subseteq V(G)$ such
    that $G[A]$ is isomorphic to $(k+1)H$
    and for each vertex of $H$, its copies in $G[A]$
    have the same set of neighbors in $V(G)\setminus A$.
  \end{enumerate}
\end{lemma}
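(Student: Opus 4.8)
The plan is to split on the size of the cut-rank of the set carrying the copies of $F$: a large cut-rank feeds directly into Lemma~\ref{lem:largerank}, while a small cut-rank forces many copies to behave identically on the outside, which we harvest by pigeonhole.

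Write $F_1,F_2,\ldots,F_\ell$ for the disjoint copies of $F$ in $G$ and set $S=V(F_1)\cup\cdots\cup V(F_\ell)$, so that $G[S]$ is isomorphic to $\ell F$. If $\cutrk_G(S)\ge N(k,n)$, then Lemma~\ref{lem:largerank}, applied with this $S$, with $H:=F$, and with $q:=\ell$, yields a vertex-minor of $G$ isomorphic to $nH'$ for some connected graph $H'$ on $k+1$ vertices, which is outcome~(i). So from now on I assume $\cutrk_G(S)\le N(k,n)-1$.

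The crucial counting observation is that the family $\{\,N_G(s)\cap R : s\in S\,\}$, where $R:=V(G)\setminus S$, has size at most $2^{\cutrk_G(S)}\le 2^{N(k,n)-1}$: these sets are exactly the rows of the $S\times R$ adjacency matrix of $G$, and over the binary field that matrix has at most $2^{\cutrk_G(S)}$ distinct rows because its row space has dimension $\cutrk_G(S)$. For a vertex $v$ of $F$, let $v^{(i)}$ denote the corresponding vertex of $F_i$, and define the \emph{type} of $F_i$ to be the tuple $\bigl(N_G(v^{(i)})\cap R\bigr)_{v\in V(F)}$; there are at most $\bigl(2^{N(k,n)-1}\bigr)^{k}=2^{k(N(k,n)-1)}$ possible types. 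If every type class contained at most $k$ copies, then $\ell\le k\cdot 2^{k(N(k,n)-1)}$, contradicting $\ell=k\cdot 2^{k(N(k,n)-1)}+1$; hence some type class contains indices $i_1<i_2<\cdots<i_{k+1}$ with $F_{i_1},F_{i_2},\ldots,F_{i_{k+1}}$ all of the same type. Put $A=V(F_{i_1})\cup\cdots\cup V(F_{i_{k+1}})$. Since $G[S]\cong\ell F$ has no edge between distinct copies, $G[A]$ is isomorphic to $(k+1)F$, and for every $v\in V(F)$ and every $t$ all neighbours of $v^{(i_t)}$ outside $A$ lie in $R$; as $F_{i_1},\ldots,F_{i_{k+1}}$ share a type, the sets $N_G(v^{(i_t)})\cap R$ coincide over $t$, so the copies of each vertex of $F$ in $G[A]$ have the same neighbourhood in $V(G)\setminus A$. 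This gives outcome~(ii).

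There is no serious obstacle beyond routine bookkeeping: the points needing care are that $\ell F$ must be used as an \emph{induced} subgraph (so that, for a chosen copy, "neighbours outside $A$" coincide with "neighbours in $R$"), and that the constant $k\cdot 2^{k(N(k,n)-1)}+1$ is exactly what is needed so that some type class has at least $k+1$ copies. The substantive work has been delegated to Lemma~\ref{lem:largerank}.
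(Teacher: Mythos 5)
Your proof is correct and follows essentially the same route as the paper's: split on whether $\cutrk_G(S)\ge N(k,n)$, invoke Lemma~\ref{lem:largerank} in the large-rank case, and in the small-rank case use the fact that a rank-$r$ binary matrix has at most $2^r$ distinct rows to pigeonhole the $\ell$ copies into at most $2^{k(N(k,n)-1)}$ classes by their outside-neighbourhood profiles. Your ``type'' tuple is exactly the paper's grouping via the sets $Z_i$, and you correctly note the point (implicit in the paper) that inducedness of $\ell F$ lets one pass from neighbours in $V(G)\setminus S$ to neighbours in $V(G)\setminus A$.
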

	\begin{proof}
	Let $S\subseteq V(G)$ be a vertex set such that
	$G[S]$ is isomorphic to $\ell H$.

	If $\cutrk_{G}(S)\ge N(k,n)$, then by Lemma~\ref{lem:largerank}, 
	$G$ contains a vertex-minor isomorphic to $nH'$ for some connected graph $H'$ on $k+1$ vertices. 
	Therefore, we may assume that $\cutrk_{G}(S)<N(k,n)$.

        Let $V(H)=\{z_1, z_2, \ldots, z_k\}$.
	For each $i\in \{1, 2, \ldots, k\}$, let $Z_i$ be the set of all copies of $z_
        i$ in $S$.
	Since  $\cutrk_{G}(S)<N(k,n)$,
        \[ \rank A(G)[Z_i, V(G)\setminus S]\le N(k,n)-1\] for each $i\in \{1, 2, \ldots, k\}$ and so 
        $A(G)[Z_i, V(G)\setminus S]$ has at most $2^{N(k,n)-1}$  distinct rows because it is a $0$-$1$ matrix.
	In other words, \[\abs{\{N_G(v)\cap (V(G)\setminus S): v\in Z_i\}}\le 2^{N(k,n)-1}\] for each $1\le i\le k$.

        Since $\lceil\ell/2^{k(N(k,n)-1)}\rceil \ge k+1$, by the pigeon-hole principle,
        there exists a set $\mathcal C$ of at least $k+1$ components of $G[S]$ such that
	for each $i\in \{1, 2, \ldots, k\}$, 
	vertices in $Z_i\cap (\bigcup_{C\in \mathcal C}V(C))$ have the same set of neighbors in $V(G)\setminus S$.
	It implies (ii).
	\end{proof}

\begin{lemma}\label{lem:samecopy}
  Let $k$, $n$ be positive integers.
  If a graph has more than $(n-1)2^{\binom{k+1}{2}}$ components having exactly $k+1$ vertices, then it contains an induced subgraph isomorphic to $nH$ for some connected graph $H$ on $k+1$ vertices.
\end{lemma}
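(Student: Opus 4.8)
The plan is a direct pigeonhole argument on isomorphism types of components. First I would bound the number of isomorphism classes of connected graphs on exactly $k+1$ vertices: fixing a $(k+1)$-element vertex set, every graph on it is determined by which of the $\binom{k+1}{2}$ possible edges are present, so there are at most $2^{\binom{k+1}{2}}$ labeled graphs on it, and hence at most $2^{\binom{k+1}{2}}$ isomorphism classes of graphs — in particular of connected graphs — on $k+1$ vertices.

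Next, let $\mathcal{D}$ be the set of components of the given graph $G$ that have exactly $k+1$ vertices, so that $\abs{\mathcal{D}}>(n-1)2^{\binom{k+1}{2}}$ by hypothesis. Group the members of $\mathcal{D}$ according to their isomorphism type. Since there are at most $2^{\binom{k+1}{2}}$ possible types and more than $(n-1)2^{\binom{k+1}{2}}$ components, some type is attained by more than $n-1$, hence by at least $n$, of the components; let $H$ be this common type, a connected graph on $k+1$ vertices.

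Finally, choose $n$ components $C_1,C_2,\ldots,C_n\in\mathcal{D}$, each isomorphic to $H$, and set $A=V(C_1)\cup\cdots\cup V(C_n)$. Because distinct components of $G$ have no edges between them, $G[A]$ contains no edge with endpoints in different $C_i$'s, so $G[A]$ is exactly the disjoint union of $G[V(C_1)],\ldots,G[V(C_n)]$, each of which equals $C_i\cong H$. Hence $G[A]$ is isomorphic to $nH$, giving the desired induced subgraph with $H$ connected on $k+1$ vertices.

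I do not expect any genuine obstacle here; the only point that needs a word of care is the last step, namely that the subgraph induced on a union of whole components is literally the disjoint union of those components — immediate once one recalls that a component carries all edges of $G$ among its vertices and has no edge leaving it.
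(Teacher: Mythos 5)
Your proof is correct and follows exactly the paper's argument: bound the number of isomorphism types of graphs on $k+1$ vertices by $2^{\binom{k+1}{2}}$ and apply the pigeonhole principle to find $n$ pairwise isomorphic components. The extra remark that the induced subgraph on a union of whole components is their disjoint union is a harmless elaboration of what the paper leaves implicit.
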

\begin{proof}
  The number of non-isomorphic graphs on $k+1$ vertices is at most $2^{\binom{k+1}{2}}$.
    By the pigeon-hole principle, 
        at least $n$ components are pairwise isomorphic.
\end{proof}
We will use the following lemma under the condition that $t=k$ but 
we prove a stronger statement for the convenience of the proof. 
\begin{lemma}\label{lem:sameneighbors}
  Let $k$, $t$ be integers such that $1\le t\le k$.
  Let $H$ be a connected graph on $k$ vertices.
  Let $G$ be a graph such that every component has more than $k$ vertices
  and $G$ contains $(t+1)H$ as an induced subgraph.
  If
  \begin{itemize}
  \item   for each vertex of $H$, their copies in $(t+1)H$
  have the same set of neighbors in $V(G)\setminus V((t+1)H)$
  and
  \item each component of $(t+1)H$ has at most $t$ vertices having a neighbor
  in $V(G)\setminus V((t+1)H)$,
  \end{itemize}
  then there exist a graph $G'$ locally equivalent to $G$,
  disjoint subsets $S$, $T$ of $V(G')$ and a vertex $v$ in $S$ such that
  \begin{enumerate}[(i)]
  \item   $G'[S]$ is a connected graph on $k+1$ vertices,
  \item   $\abs{T}\le t(k+1)$, and 
    \item  $G'[S\setminus\{v\}]$ is a component of $G'-(T\cup\{v\})$.
  \end{enumerate}
\end{lemma}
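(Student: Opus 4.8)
The plan is to peel off one copy of $F$ to serve as $S\setminus\{v\}$ and to dump the remaining copies into $T$, neutralising each ``boundary'' vertex of the chosen copy with a single well-chosen pivot. First I would fix notation: let $F_1,\dots,F_{t+1}$ be the $t+1$ copies of $F$ inside the given induced copy of $(t+1)F$, and for a vertex $z$ of $F$ write $z^i$ for its copy in $F_i$. Let $B\subseteq V(F)$ be the set of vertices $z$ whose copies have a neighbour outside $V((t+1)F)$; by hypothesis $\abs{B}\le t$, and $B\neq\emptyset$ because otherwise some $F_i$ would be a component of $G$ on exactly $k$ vertices, contradicting the assumption on $G$. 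For $z\in B$ let $N_z\subseteq V(G)\setminus V((t+1)F)$ be the common external neighbourhood of $z^1,\dots,z^{t+1}$ (this is where the hypothesis that copies of a vertex have equal external neighbourhoods is used). Finally fix an injection $h\colon B\to\{1,\dots,t\}$, possible since $\abs{B}\le t$; copy $F_{t+1}$ will be the target, and $V(F_1),\dots,V(F_t)$ will go into $T$.

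\textbf{The neutralising move.} I would process the vertices $z\in B$ one at a time. At the step for $z$, if the copies of $z$ that have not yet been used as helpers currently have empty external neighbourhood, do nothing; otherwise pick any vertex $q$ in their (common) current external neighbourhood and pivot the edge $q\,z^{h(z)}$ — this is an edge, since $q$ lies in the current neighbourhood of $z^{h(z)}$. The key computation uses the three-part description of pivoting from Section~\ref{sec:prelim}: the vertex $z^{t+1}$ lies in the class of vertices adjacent to $q$ but neither equal nor adjacent to $z^{h(z)}$, so pivoting deletes every edge from $z^{t+1}$ to $N_z$ and to $q$, and instead joins $z^{t+1}$ to $z^{h(z)}$ and to the neighbours of $z$ inside $F_{h(z)}$ — all of which lie in $V(F_{h(z)})$. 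Moreover every vertex of $F_{t+1}$ that meets one of the three pivot classes lies in the \emph{same} class as $z^{t+1}$, so no edge inside $V(F_{t+1})$ changes, and the step creates no edge from $V(F_{t+1})$ to a copy $F_i$ with $i\notin h(B)$.

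\textbf{Invariants and conclusion.} Throughout the iteration one should check the following are preserved: $G'[V(F_{t+1})]\cong F$; for each copy $F_i$ not yet used as a helper, the copies of any fixed boundary vertex lying in such copies still share a common external neighbourhood; and once $z^{t+1}$ has empty external neighbourhood it keeps it, since a subsequent pivot only touches copies of its own boundary vertex or vertices adjacent to the $q$ used there, and $z^{t+1}$ is adjacent to neither. After all of $B$ has been processed, every copy of a boundary vertex inside $F_{t+1}$ has empty external neighbourhood, so setting $W:=V(F_{t+1})$, every neighbour of $W$ outside $W$ lies in $T':=\bigcup_{z\in B}V(F_{h(z)})$, and $\abs{T'}\le tk$. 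The first $z\in B$ processed is genuinely processed (nothing precedes it, so its external neighbourhood is $N_z\neq\emptyset$), whence $z^{t+1}$ is adjacent to $z^{h(z)}\in T'\setminus W$; take $v:=z^{h(z)}$, $T:=T'\setminus\{v\}$, and $S:=W\cup\{v\}$. Then $\abs{T}\le tk-1\le t(k+1)$, $G'[S]$ is $F$ with $v$ attached and hence connected on $k+1$ vertices, and $W=S\setminus\{v\}$ is connected with all of its outside neighbours in $T'=T\cup\{v\}$, so it is a component of $G'-(T\cup\{v\})$.

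\textbf{The main obstacle.} The conceptual step is short: one pivot moves the whole, possibly enormous, set $N_z$ off $F_{t+1}$ and onto the small helper copy $F_{h(z)}$, precisely because $z^{t+1}$ and $z^{h(z)}$ are ``almost twins'' — equal external neighbourhoods, but different internal attachments. The hard part will be the bookkeeping for the iteration: verifying from the pivot formula that the steps for different boundary vertices do not interfere — that an emptied copy stays emptied, that the copies still available as helpers retain a well-defined common external neighbourhood, that $F_{t+1}$ is never internally disturbed, and that $W$ never acquires a neighbour outside $\bigcup_{z\in B}V(F_{h(z)})$ — all of which rest on $h$ being injective and on distinct copies of $F$ in $(t+1)F$ being non-adjacent.
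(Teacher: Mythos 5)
Your plan is essentially the paper's proof: the paper likewise spends one copy of $F$ and one pivot per boundary vertex of $F$ --- pivoting an edge between an external neighbour and the copy of that boundary vertex in a sacrificed component, which redirects the common external neighbourhood of the remaining copies onto that component --- and ends with a surviving copy of $F$ attached to a single vertex, exactly your $(S,T,v)$. The only real difference is organisational: the paper runs this as an induction on $t$, deleting the used copy and the external pivot vertex and then invoking the induction hypothesis on the remaining $tF$ (which now has strictly fewer boundary vertices), and this packages away precisely the cross-step interference bookkeeping you defer to the end; your one-shot iteration with an injective helper assignment does go through, but the invariants you list (in particular, that unused copies keep a common external neighbourhood after each pivot, since that neighbourhood changes by a symmetric difference with $N_{z'}$ when the chosen $q'$ is also a neighbour of the copies of $z$) would all need to be checked explicitly.
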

\begin{proof}
	Let $A\subseteq V(G)$ such that $G[A]$ is isomorphic to $(t+1)H$.
	Let $\cC:=\{C_1, C_2, \ldots, C_{t+1}\}$ be the set of components of $G[A]$, and let $V(H)=\{z_1, z_2, \ldots, z_k\}$.
	For each $i\in \{1, 2, \ldots, k\}$, let $Z_i$ be the set of all copies of $z_i$ in $A$.
	Let $U_i$ be the set of neighbors of vertices of $Z_i$ on $V(G)\setminus A$ in $G$, that is, $U_i=N_G(r)\cap (V(G)\setminus A)$ for $r\in Z_i$.
	Let $X\subseteq \{1, 2, \ldots, k\}$ be the set of integers $i$ such that $U_i$ is non-empty. By the assumption $\abs{X}\le t$.
        Since each component of $G$ has more than $k$ vertices,
        we have $\abs{X}>0$.
	Without loss of generality, we may assume $X=\{1, \ldots, \abs{X}\}$.

	We proceed by induction on $t$.
        
	If $t=1$, then let $x\in Z_1\cap V(C_1)$ and $y\in U_1$. 
	We obtain a new graph from $G$ by removing vertices of $V(C_1)\setminus \{x\}$ and pivoting $xy$.
	Note that the set of neighbors of $x$ in $G-(V(C_1)\setminus \{x\})$ is exactly $U_1$.
	Thus,  after pivoting $xy$, all edges between the vertex $z$ in $Z_1\cap V(C_2)$ and $U_1\setminus \{y\}$ are removed 
	and $z$ has exactly one neighbor $x$ on $V(G)\setminus V(C_2)$.
	Therefore, $(G', S, T, v)=(G\pivot xy, V(C_2)\cup \{x\}, (V(C_1)\setminus \{x\})\cup \{y\}, x)$ is a required tuple.
	
	Now we assume that $t\ge 2$.
        We may assume that $\abs{X}=t$ by the induction hypothesis.

	Let $x\in Z_1\cap V(C_1)$ and $y\in U_1$. 
	We obtain $G_1$ from $G$ by removing vertices of $V(C_1)\setminus \{x\}$ and pivoting $xy$.
        Let $A_1=A\setminus V(C_1)$.
	Note that in $G$,  the set of neighbors of $x$
        in $V(G)\setminus V(C_1)$ is exactly $U_1$.
	Thus, 
	\begin{itemize}
	\item the adjacency relations between two vertices in $A_1$ do not change by pivoting $xy$, 
	\item all edges between $Z_1\setminus \{x\}$ and $U_1\setminus \{y\}$ are removed by pivoting $xy$.
	\end{itemize}
	Furthermore, as vertices in each $Z_i$ have the same set of neighbors on $V(G)\setminus A$ in $G$, $G_1$ has the following properties.
	\begin{itemize}
	\item For all $i'\in \{2, \ldots, t\}$, two vertices in $Z_{i'}\cap A_1$ have the same set of neighbors in $V(G_1)\setminus A_1$.
	\item If $t<k$, then  for $i'\in \{t+1, \ldots, k\}$, vertices in $Z_{i'}\cap  A_1$ have no neighbors in  $V(G_1)\setminus A_1$.
	\end{itemize}

	If vertices in $Z_j\cap A_1$ have no neighbors on $V(G_1)\setminus (A_1\cup\{x,y\})$ for all $2\le j\le k$ in $G_1$, then 
        $(G', S, T, v)=(G\pivot xy, V(C_2)\cup \{x\}, (V(C_1)\setminus \{x\})\cup \{y\}, x)$ is a required tuple.
	Thus, we may assume that there is $j\in \{2, \ldots, k\}$ such that 
	vertices in $Z_j\cap A_1$ have a neighbor on $V(G_1)\setminus (A_1\cup\{x,y\})$ in $G_1$.

	Note that $G_1-\{x,y\}$ contains an induced subgraph isomorphic to $tH$ on the vertex set $A_1$
  such that  
  \begin{itemize}
  \item for each vertex of $H$, their copies in $tH$
    have the same set of neighbors in $V(G_1-\{x,y\})\setminus A_1$,
   \item each component of $tH$ has at least one and less than $t$ vertices having a neighbor in $V(G_1-\{x,y\})\setminus A_1$.
    \end{itemize}
    By the induction hypothesis, $G_1-\{x,y\}$ has the tuple $(G',S,T,v)$.
    Let $G''$ be the graph locally equivalent to $G$
    such that $G''-V(C_1)-y=G'$.
    Then $(G'',S,T\cup V(C_1)\cup \{y\}, v)$ is a required tuple for $G$.
	\end{proof}

	We prove the main proposition.
	
	\begin{proposition}\label{prop:rkbrittleconverse}
	For positive integers $k$ and $n$, there exists an integer $\ell=\ell(k,n)$ such that 	
	every graph with rank $k$-brittleness more than $\ell$ contains a vertex-minor isomorphic to 
	$nH$ for some connected graph $H$ on $k+1$ vertices.
	\end{proposition}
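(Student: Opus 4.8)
The plan is to prove this by induction on $k$, following the pattern of Proposition~\ref{prop:vbrittleconverse} but built on the cut-rank lemmas above.

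For the base case $k=1$, I would argue directly. The only partition of $V(G)$ into parts of size at most $1$ is into singletons, so $\beta_1^\rho(G)=\max_{X\subseteq V(G)}\rho_G(X)$; if this exceeds a suitable threshold, choose $X$ with $\rho_G(X)$ large together with $B\subseteq V(G)\setminus X$ and $A'\subseteq X$ of size $\rho_G(X)$ realizing full rank of $A(G)[A',B]$. As in the proof of Lemma~\ref{lem:largerank}, two vertices of $B$ have distinct neighbourhoods in $A'$, so Theorem~\ref{thm:largebipartite} followed by two applications of Ramsey's theorem gives large $A_3\subseteq A'$ and $B_3\subseteq B$, each a clique or an independent set, with $G[A_3,B_3]$ isomorphic to $\S_m\mat\S_m$, $\S_m\tri\S_m$, or $\S_m\antimat\S_m$; Lemmas~\ref{lem:tomatching} and~\ref{lem:lengthonecase} then yield a vertex-minor isomorphic to $\S_n\mat\S_n=nK_2$, which is $nH$ for the only connected graph $H=K_2$ on $k+1=2$ vertices. (Here $X$ need not be independent, so Lemma~\ref{lem:largerank} is not literally applicable, but its proof works.)

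For $k\ge 2$, I would define $\ell(k,n)$ by a recursion in which $\ell(k,n)$ dominates $\ell\bigl(k-1,\,k2^{k(N(k,n)-1)}+1\bigr)$ plus $(k+1)^2$ times a bounded number $p=p(k,n)$ of rounds, where $N$ is the function of Lemma~\ref{lem:largerank}. Given $G$ with $\beta_k^\rho(G)>\ell(k,n)$, first discard all components with at most $k$ vertices: this does not lower $\beta_k^\rho$, since a whole component may be taken as a single part of any partition and contributes nothing to the $\rho$-width of a union (the cut-rank analogue of Lemma~\ref{lem:removebridge}); and if $G$ then has more than $(n-1)2^{\binom{k+1}{2}}$ components with exactly $k+1$ vertices we finish by Lemma~\ref{lem:samecopy}. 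Now iterate: in round $i$, since $\beta_{k-1}^\rho\ge\beta_k^\rho$ is still large, the induction hypothesis (with second parameter $k2^{k(N(k,n)-1)}+1$) gives a graph locally equivalent to the current one with an induced copy of $\ell F$ for some connected $F$ on $k$ vertices; Lemma~\ref{lem:rowtwins} either finishes or produces a twin-packed induced $(k+1)F$; and Lemma~\ref{lem:sameneighbors} with $t=k$ yields a locally equivalent graph $G'_i$ together with $S_i,T_i,v_i$ such that $G'_i[S_i]$ is connected on $k+1$ vertices, $\abs{T_i}\le k(k+1)$, and $G'_i[S_i\setminus\{v_i\}]$ is a component of $G'_i-(T_i\cup\{v_i\})$. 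Delete $T_i$ and $S_i\setminus\{v_i\}$ (then again any component of size at most $k$) to start round $i+1$, and record the piece $G'_i[S_i]$; by Lemma~\ref{lem:loc} and the fact (used for Proposition~\ref{prop:vmbrittle}) that deleting one vertex lowers $\beta_k^\rho$ by at most one, the brittleness drops by at most $(k+1)^2$ per round, so the process survives all $p$ rounds.

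Finally I would reassemble. Because in $G'_i$ the set $S_i\setminus\{v_i\}$ meets the rest of $G'_i-T_i$ only at $v_i$, adding each deleted set $S_i\setminus\{v_i\}$ back — attached to $v_i$ and given internal edges exactly as in $G'_i$, with no other edges — produces a graph $G^\ast$ still a vertex-minor of $G$, in which the $S_i$ are pairwise disjoint, each $G^\ast[S_i]$ is the recorded connected $(k+1)$-vertex graph, and each $S_i\setminus\{v_i\}$ has no neighbour outside $S_i$; hence $S_i$ is adjacent to $S_j$ if and only if $v_i$ is adjacent to $v_j$ in $G^\ast$. After discarding all but $\lceil p/2^{\binom{k+1}{2}}\rceil$ pieces of a single isomorphism type and applying Ramsey's theorem to the induced subgraph of $G^\ast$ on the corresponding special vertices, we obtain $n+1$ of them whose special vertices form a clique or an independent set; in the independent case the corresponding $S_i$'s are pairwise non-adjacent, so $G^\ast$ restricted to their union is a disjoint union of $n+1\ge n$ copies of a connected graph $H$ on $k+1$ vertices, and in the clique case one local complementation at one of these special vertices makes the other $n$ independent without changing the induced graphs on their $S_i$'s, again exhibiting $nH$ as a vertex-minor of $G$. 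The main obstacle is exactly this reassembly: the pieces produced in different rounds each live in their own locally equivalent graph, and one must guarantee both that they can be realized simultaneously inside one vertex-minor of $G$ and that the retained special vertices $v_i$ are never used later in a way that corrupts the recorded pieces — which is precisely why Lemma~\ref{lem:sameneighbors} is engineered to detach $S_i\setminus\{v_i\}$ from everything except the single vertex $v_i$ — together with the bookkeeping needed so that the recursively defined $\ell(k,n)$ leaves enough brittleness for every round.
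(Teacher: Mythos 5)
Your high-level plan coincides with the paper's: induction on $k$, the same base case via Theorem~\ref{thm:largebipartite} and Lemmas~\ref{lem:tomatching} and~\ref{lem:lengthonecase}, extraction of tuples $(S_i,T_i,v_i)$ via Lemmas~\ref{lem:rowtwins} and~\ref{lem:sameneighbors}, and a final Ramsey argument on the special vertices. However, the step you yourself flag as ``the main obstacle'' is a genuine gap, and your proposed resolution does not close it. After round $i$ you delete $S_i\setminus\{v_i\}$ and $T_i$ and continue in the smaller graph; later rounds then perform further local complementations in that smaller graph, possibly at $v_i$ itself or at its neighbours, and the induction hypothesis gives no control over where they occur (a single local complementation at $v_i$ already alters the induced subgraph on $S_i$). ``Adding each deleted set back, attached to $v_i$ and given internal edges exactly as in $G'_i$, with no other edges'' defines a graph $G^\ast$ by fiat; it is not a sequence of local complementations and vertex deletions applied to $G$, so nothing guarantees that $G^\ast$ is a vertex-minor of $G$. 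In particular, the asserted absence of edges between $v_i$ and $S_j\setminus\{v_j\}$ for $i\neq j$ is a property of your invented $G^\ast$, not of any graph actually derived from $G$.

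The paper closes exactly this gap with an extremal device: among all graphs locally equivalent to $G$, it fixes one $G'$ admitting a \emph{longest} valid sequence of tuples, all coexisting in that single $G'$, and derives a contradiction if the sequence is short. The extension step applies Lemmas~\ref{lem:rowtwins} and~\ref{lem:sameneighbors} only inside $G'-\bigcup_j(S_j\cup T_j)$, which excludes every $v_j$; since each vertex there has at most one neighbour (namely $v_j$) in each $S_j$, the recorded pieces survive the new local complementations, and the sequence can be extended. The price of working in one fixed $G'$ is that $v_i$ may be adjacent to vertices of $S_j\setminus\{v_j\}$ for $j>i$; the paper handles this with a Ramsey colouring in $2^{k+1}$ colours, where the bad colours yield $\S_{\lfloor\ell_2/2\rfloor}\tri\S_{\lfloor\ell_2/2\rfloor}$ or $\S_{\lfloor\ell_2/2\rfloor}\tri K_{\lfloor\ell_2/2\rfloor}$ and hence a long path and $nP_{k+1}$ via Lemma~\ref{lem:lengthonecase} --- a case your fiat construction silently suppresses. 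Until the simultaneous realization of all pieces inside one graph locally equivalent to $G$ is established, the argument is incomplete.
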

\begin{proof}
Let $f, N$ be the functions defined in Theorem~\ref{thm:largebipartite} and Lemma~\ref{lem:largerank}, respectively.
	We define
	\begin{itemize}
	\item $\ell_2(1,n):=\max (n+2, \lceil (3n+1)/2\rceil)$, 
	\item $\ell_1(1,n):=R(\ell_2(1,n);4)$,
	\item $\ell(1,n):=f(\ell_1(1,n))-1$, 
	\end{itemize}
	and for $k\ge 2$, let
	\begin{itemize}
	\item $\ell_3(k,n):=k2^{k(N(k,n)-1)}+1$,
	\item $\ell_2(k,n):= \max\left((k+2)n, 2^{\binom{k+1}{2}}(n-1)+2\right)$,
	\item $\ell_1(k,n):=R(\ell_2(k,n);2^{k+1})$,
	\item $\ell(k,n):=\ell(k-1, \ell_3(k,n))+(k+1)^2 (\ell_1(k,n)-1)$.
	\end{itemize}
	We will prove the statement by induction on $k$. 
	We shortly denote $\ell_1(k,n)$, $\ell_2(k,n)$, $\ell_3(k,n)$, $\ell(k,n)$ as $\ell_1$, $\ell_2$, $\ell_3$, $\ell$,  respectively.

Let us first consider the case that $k=1$.
	Suppose $G$ has rank $1$-brittleness more than $\ell$.	
	Then, there exists a vertex set $A$ such that $\cutrk_G(A)> \ell$.
	Choose $A_1\subseteq A$ and $B_1\subseteq V(G)\setminus A$
        such that $\abs{A_1}=\abs{B_1}=\ell+1$ and 
	$\rank \left( A(G)[A_1,B_1]\right) =\ell+1$. 
	Note that two vertices in $B_1$ have distinct neighbors on $A_1$.
	Since $\ell+1=f(\ell_1)$, by Theorem~\ref{thm:largebipartite}, 
	there exist $A_2\subseteq A_1$ and $B_2\subseteq B_1$ with $\abs{A_2}=\abs{B_2}=\ell_1$ such that
       $G[A_2, B_2]$ is isomorphic to $\S_{\ell_1}\mat\S_{\ell_1}$, $\S_{\ell_1}\tri \S_{\ell_1}$, or
     $\S_{\ell_1}\antimat\S_{\ell_1}$.

	As $\ell_1= R(\ell_2;4)$, by Ramsey's theorem,
	there exist $A_3\subseteq A_2$ and $B_3\subseteq B_2$ such that 
	\begin{itemize}
	\item $G[A_3, B_3]$ is isomorphic to $\S_{\ell_2}\mat\S_{\ell_2}$, $\S_{\ell_2}\tri \S_{\ell_2}$, or
     $\S_{\ell_2}\antimat\S_{\ell_2}$, and
     \item each of $A_3$ and $B_3$ is a clique or an independent set.
	\end{itemize}
	If $G[A_3, B_3]$ is isomorphic to $\S_{\ell_2}\tri \S_{\ell_2}$, then by  Lemma~\ref{lem:lengthonecase},
	$G[A_3\cup B_3]$ contains a vertex-minor isomorphic to $P_{2\ell_2-2}$. 
	As $2\ell_2-2\ge 2(\frac{3n+1}{2})-2\ge 3n-1$, $P_{2\ell_2-2}$ contains an induced subgraph isomorphic to $nK_2$.
	Therefore we may assume that $G[A_3, B_3]$ is isomorphic to 
	$\S_{\ell_2}\mat \S_{\ell_2}$ or $\S_{\ell_2}\antimat\S_{\ell_2}$.
	Because $\ell_2\ge n+2$, by Lemma~\ref{lem:tomatching}, $G$ contains a vertex-minor isomorphic to $\S_{n}\mat\S_{n}$,
	which is isomorphic to $nK_2$, as required.

	Now, we prove for $k\ge 2$.
	Suppose $G$ has rank $k$-brittleness more than $\ell$.
	Among all graphs $G'$ locally equivalent to $G$,
        choose $G'$ admitting a sequence of $m+1$ tuples
	\[(S_0, T_0), (S_1, T_1, v_1), (S_2, T_2, v_2), \ldots, (S_m, T_m, v_m)\]
        with the maximum $m$
        such that
	\begin{itemize}
	\item $S_0=T_0=\emptyset$, 
	\item $S_1, S_2, \ldots, S_m, T_1, T_2, \ldots, T_m$ are pairwise disjoint subsets of $V(G')$,
	\item for each $i\in \{1, 2, \ldots, m\}$, 
	\begin{itemize}
	\item $\abs{S_i}=k+1$ and $G'[S_i]$ is connected, 
	\item $\abs{T_i}\le k(k+1)$, 
	\item $v_i\in S_i$, 
	\item no vertex in $S_i\setminus \{v_i\}$
          has a neighbor in $V(G')\setminus (\bigcup_{0\le j\le i}(S_j\cup T_j))$.
	\end{itemize}
      \end{itemize}

      Such a graph $G'$ exists trivially because $(S_0,T_0)$ is a valid sequence
      for $G$ and so $m\ge 0$.
	Suppose that $m<\ell_1$.
	Let $G_1:=G'-(\bigcup_{0\le j\le m}(S_j\cup T_j))$.
	Since $G'$ is locally equivalent to $G$, $\rkbrit_k(G')=\rkbrit_k(G)$, and
	therefore, 
	\[\rkbrit_k(G')=\rkbrit_k(G)>\ell(k-1, \ell_3)+(k+1)^2 (\ell_1-1).\] 
	As $\abs{\bigcup_{0\le j\le m}(S_j\cup T_j)}\le (k+1)^2 m\le (k+1)^2 (\ell_1-1)$,
	by Proposition~\ref{prop:vmbrittle}, 
	 we have that 
	$\rkbrit_k(G_1)> \ell(k-1, \ell_3)$.
	Let $G_2$ be the graph obtained from $G_1$ by removing all components having at most $k$ vertices.
	It is not difficult to observe that $\rkbrit_k(G_2)=\rkbrit_k(G_1)$.

	As $\rkbrit_{k-1}(G_2)\ge \rkbrit_k(G_2)$, by the induction hypothesis, 
	$G_2$ contains a vertex-minor isomorphic to $\ell_3 F$ for some connected graph $F$ on $k$ vertices.
	Thus, there exist a graph $G_3$ locally equivalent to $G_2$ and a vertex subset $A$ of $G_3$ such that
	$G_3[A]$ is isomorphic to $\ell_3F$.

	Note that $\ell_3= k2^{k(N(k,n)-1)}+1$.
	So, by Lemma~\ref{lem:rowtwins}, either
	\begin{enumerate}[(1)]
	\item $G_3$ contains a vertex-minor isomorphic to $nH$ for some connected graph $H$ on $k+1$ vertices, 
	or 
	\item there exists $A'\subseteq V(G_3)$ such
    that $G_3[A']$ is isomorphic to $(k+1)F$
    and for each vertex of $F$, its copies in $G_3[A']$
    have the same set of neighbors in $V(G_3)\setminus A'$.
    \end{enumerate}
    We may assume that $(2)$ holds.
    Since $G_3$ is locally equivalent to $G_2$, 
    every component of $G_3$ has more than $k$ vertices.
    By Lemma~\ref{lem:sameneighbors} (with $t:=k$), 
    there exist a graph $G_4$ locally equivalent to $G_3$,
  disjoint subsets $S$, $T$ of $V(G_4)$, and a vertex $v$ in $S$ such that
  \begin{enumerate}[(i)]
  \item   $G_4[S]$ is a connected graph on $k+1$ vertices,
  \item   $\abs{T}\le k(k+1)$, and 
    \item  $G_4[S\setminus\{v\}]$ is a component of $G_4-(T\cup\{v\})$.
  \end{enumerate}
  
  In $G'$, no vertex in $S_i\setminus\{v_i\}$ has a neighbor
  in $V(G')\setminus (\bigcup_{0\le j\le m}(S_j\cup T_j))$.
  Let $G''$ be the graph obtained from $G'$ by applying the same sequence of
  local complementations needed to obtain $G_4$ from $G_2$.
  Since $G_2$ has no vertex in $\bigcup_{0\le j\le m} (S_j\cup T_j)$
  and at most one vertex of $G'[S_i]$ has a neighbor in $V(G')\setminus\bigcup_{0\le j\le m} (S_j\cup T_j)$, we deduce that  
  $G''[S_i]=G'[S_i]$ for all $i\in \{1,2,\ldots,m\}$.
  Therefore, $G''$ admits the sequence
  $(S_0,T_0)$, $(S_1,T_1,v_1)$, $\ldots$, $(S_m,T_m,v_m)$,
  $(S,T,v)$, contradicting the assumption on the choice of $G'$ with the maximum $m$.
  Thus $m\ge \ell_1$.
  \smallskip

  In $G'$, for $i,j\in \{1, 2, \ldots, \ell_1\}$ with $i<j$, $v_i$ may have neighbors on $S_j$, but $v_j$ has no neighbors on $S_i\setminus \{v_i\}$.
  Let $s_{i,1}, s_{i,2},\ldots,s_{i,k}$ be the vertices in $S_i\setminus\{v_i\}$ for each~$i$.
  
  We construct a complete graph on the vertex set $\{w_1, w_2, \ldots, w_{\ell_1}\}$, 
	 and for $i,j\in \{1, 2, \ldots, \ell_1\}$ with $i<j$, 
	 we color the edge $w_iw_j$ by one of $2^{k+1}$ colors, depending on the adjacency relation between 
	 $v_i$ and $S_j$.
	 As $\ell_1= R(\ell_2; 2^{k+1})$, 
	 there exists a subset $I\subseteq \{1, 2, \ldots, \ell_1\}$ such that 
	 $\abs{I}=\ell_2$ and
         edges between two vertices in $\{w_i:i\in I\}$ are monochromatic.
         This also implies that $\{v_i:i\in I\}$ is a clique or an independent set.
         Let  $i_1<i_2<\cdots<i_{\ell_2}$ be the elements of $I$.

	 For some $i, j\in I$ with $i<j$, if $v_i$ is adjacent to $s_{j,j'}$ for some $j'$, 
	 then
         for all $i,j\in I$ with $i\neq j$,
         $v_i$ is adjacent to $s_{j,j'}$ if and only if $i<j$.
         By taking vertices $v_{i_1}$, $v_{i_3}$, $\ldots$, $v_{i_{2\lfloor \ell_2/2\rfloor-1}}$
         and $s_{i_2,j'}$, $s_{i_4,j'}$, $\ldots$, $s_{i_{2\lfloor \ell_2/2\rfloor},j'}$, 
         we obtain an induced subgraph of $G'$
         isomorphic to either $\S_{\lfloor \ell_2/2\rfloor}\tri\S_{\lfloor \ell_2/2\rfloor}$ or $\S_{\lfloor \ell_2/2\rfloor}\tri K_{\lfloor \ell_2/2\rfloor}$.
	By Lemma~\ref{lem:lengthonecase}, 
	$G'$ contains a vertex-minor isomorphic to $P_{\ell_2-1}$. 
	As $\ell_2-1\ge (k+2)n-1$, 
	 $P_{\ell_2-1}$ contains an induced subgraph isomorphic to $nP_{k+1}$.
	 Thus, $G$ contains a vertex-minor isomorphic to $nP_{k+1}$.
	 Therefore we may assume that 
	 for $i, j\in I$ with $i<j$, $v_i$ has no neighbors in $S_j\setminus \{v_j\}$. 

         If $\{v_i:i\in I\}$ is independent in $G'$,
         then
         $G'[\bigcup_{i\in I} S_i]$ is the disjoint union of
         $\ell_2$ connected graphs, each having exactly $k+1$ vertices.
         Since $\ell_2>2^{\binom{k+1}{2}}(n-1)$, by Lemma~\ref{lem:samecopy},
         $G$ contains a vertex-minor isomorphic to $nH$ for some connected graph $H$ on $k+1$ vertices.

         If $\{v_i:i\in I\}$ is a clique in $G'$,
         then
         let $i'\in I$ and let $G''= G'*v_{i'}$.
         Then 
         $G''[\bigcup_{i\in I,i\neq i'} S_i]$ is the disjoint union of
         $\ell_2-1$ connected graphs, each having exactly $k+1$ vertices.
         Since $\ell_2-1>2^{\binom{k+1}{2}}(n-1)$, by Lemma~\ref{lem:samecopy},
         $G$ contains a vertex-minor isomorphic to $nH$ for some connected graph $H$ on $k+1$ vertices.
       \end{proof}

       Here is the proof of Theorem~\ref{thm:main4}.
	Let $\mathcal{C}$ be a vertex-minor ideal.
	Suppose $\cC$ is rank $k$-scattered, that is, 
	there exists an integer $\ell$ such that 
        every graph $G\in \mathcal{C}$ has rank $k$-brittleness at most $\ell$.
	Then by (3) of Lemma~\ref{lem:edgeforward1}, for every connected graph $H$ on $k+1$ vertices, 
	$\mathcal{C}$ does not contain $(2\ell+1)H$.

        For the converse, suppose that for every connected graph $H$ on
        $k+1$ vertices, there exists $n_H$ such that $n_H H\notin \cC$.
        Since there are only finitely many non-isomorphic graphs on
        $k+1$ vertices, there exists the maximum $n$ among all $n_H$.
        Then $nH\notin \cC$ for all connected graphs $H$ on $k+1$ vertices.
        By Proposition~\ref{prop:rkbrittleconverse},
        all graphs in $\cC$ have rank $k$-brittleness at most $\ell(k,n)$.

\section{Comparisons}\label{sec:parameter}

In this section, we compare our concepts
with existing concepts on graphs.
See Figure~\ref{fig:relation} for the relations that we are going to prove.

\begin{figure}
        \centering
        \begin{tikzcd}[column sep=small]        
        && \substack{\text{bounded}\\\text{\emph{rank-width}}}
        \\
        &\substack{\text{bounded}\\\text{\emph{tree-width}}}
        \arrow [ur]
        & \substack{\text{bounded}\\\text{\emph{linear rank-width}}}
        \arrow[u]
        \\
        &\substack{\text{bounded}\\\text{\emph{path-width}}}
        \arrow [ur] \arrow[u]
        & \substack{\text{bounded}\\\text{\emph{rank-depth (shrub-depth)}}}
        \arrow[u]
        &
        \substack{\text{bounded}\\\text{\emph{modular-width}}}
        \arrow[luu]
        \\       
        &\substack{\text{bounded}\\\text{\emph{tree-depth}}}
        \arrow [ur]\arrow[u]
        & 
        \substack{\text{\emph{rank $k$-scattered}} \\ \text{for some $k$}}      \arrow[u]
        &
        \\
        \substack{\text{\emph{vertex $k$-scattered}} \\ \text{for some $k$}}
        \arrow[r,red,shift left=2,"k'\leftarrow k+1" ]
        &
        \substack{\text{\emph{matching $k$-scattered}} \\ \text{for some $k$}}
        \arrow [ur,red,"k'\leftarrow k"  description]
        \arrow [l,dashed, shift left=1]
        \arrow [u]
        \\
        &&
        \substack{\text{\emph{rank $1$-scattered}}\\}
        \arrow[r,Leftrightarrow]\arrow[uuur]\arrow[uu]
        &\substack{\text{bounded} \\ \text{\emph{neighborhood diversity}}}
        \\
        \substack{\text{\emph{edge $k$-scattered}} \\ \text{for some $k$}}
        \arrow[uur,red,"k'\leftarrow k" description]
        \arrow[uu,red, "k'\leftarrow \binom{k}{2}" description]        
        &
        \substack{\text{\emph{matching $1$-scattered}} \\}
        \arrow[uu]\arrow[ru]\arrow[r,Leftrightarrow]
        &
        \substack{\text{bounded} \\ \text{\emph{vertex cover number}}}
        \end{tikzcd}
        \caption{Comparing graph classes. 
        An arrow from $A$ to $B$ means that a class with the property $A$ satisfies the property $B$.
        A red solid arrow from $A$ to $B$ with the condition $k'\leftarrow f(k)$ implies that if a class has the property $A$ with $k$, then it has the property B with $k:=k'$.
        A dashed arrow from $A$ to $B$ means that if a class has the property $A$ with $k$, then it has the property B with $k:=k'$
        but we do not have a function for $k'$ depending only on $k$.}
        \label{fig:relation}
\end{figure}

\subsection{Vertex cover number and matching $1$-scatteredness}
A set $S$ of vertices in $G$ is a \emph{vertex cover} of $G$ if $G-S$ has no edges.
Let $\cover(G)$ denote the minimum size of a vertex cover of a graph $G$, 
which we call the \emph{vertex cover number} of $G$.
\begin{proposition}\label{prop:vcbrit}
        A class of graphs has bounded vertex cover number 
        if and only if it is matching $1$-scattered.                
\end{proposition}
\begin{proof}
        We claim that        
        \[ \mbrit_1(G)\le \cover(G)\le 4\mbrit_1(G).               \]
        It is easy to see that $\mbrit_1(G)\le \cover(G)$ because $G$ has no matching of size larger than $\cover(G)$. 

        Let us assume that $\mbrit_1(G)\le m$. 
        Let $M$ be a maximum matching of $G$.
        If $\abs{M}>2m$, then 
        by the probabilistic argument, there is a subset $I$ of $V(G)$ such that at least half of the edges in $M$ joins a vertex in $I$ to a vertex in $V(G)\setminus I$, contradicting the assumption that 
        $\mbrit_1(G)\le m$.
        So $\abs{M}\le 2m$. Then the set of all ends of edges in $M$ is a vertex cover of size at most $4m$.
\end{proof}

\begin{proposition}
        There is a matching $1$-scattered class of graphs that is not edge $k$-scattered for any integer~$k$.
\end{proposition}
\begin{proof}
        The graph $K_{1,n}$ has matching $1$-brittleness $1$, while it has edge $k$-brittleness at least $n-k+1$ by Lemma~\ref{lem:edgeforward2}.
\end{proof}
\subsection{Neighborhood diversity and rank $1$-scatteredness}
        The neighborhood diversity was introduced by Lampis~\cite{Lampis2012}.
        Two vertices $v$ and $w$ in a graph $G$ are \emph{twins} if $v$ and $w$ have the same set of neighbors in $V(G)\setminus\{v,w\}$. 
        The \emph{neighborhood diversity} of a graph $G$
        is the minimum $t$ such that there is a partition of the vertex set of $G$ into at most $t$ sets, each of which is a set of pairwise twins.
	\begin{proposition}\label{prop:diversity}
        A class of graphs has bounded neighborhood diversity 
        if and only if
        it is rank $1$-scattered.
	\end{proposition}
        \begin{proof}
                For the forward direction, we claim that the rank $1$-brittleness of a graph is less than or equal to its neighborhood diversity.
                Let $G$ be a graph of neighborhood diversity at most $t$.
                For a set $A$ of vertices, let $M_A$ be the $A\times (V(G)
                \setminus A)$ submatrix of the adjacency matrix of $G$ over the binary field so that $\rank M_A=\rho_G(A)$.
                Then $M_A$ has at most $t$ distinct rows and so $\cutrk_G(A)\le t$ for all $A\subseteq V(G)$.
                It implies that $\rkbrit_1(G)\le t$.
                
                The backward direction is implied by the lemma of 
                Nguyen and Oum~\cite[Lemma 5.3]{NO2019}, showing that
                if $\rho_G(X)\le n$ for all $X\subseteq V(G)$, then 
                the neighborhood diversity is at most $2^{2n+2}$. 
	\end{proof}
        Lampis~\cite[Lemma 2]{Lampis2012} shows that if  $G$ has a vertex cover of size $t$, then its neighborhood diversity is at most $2^t+t$.
        \begin{proposition}
                There is a class of graphs of neighborhood diversity $1$
                that has unbounded tree-width.
        \end{proposition}
        \begin{proof}
                The complete graph $K_n$ has neighborhood diversity $1$
                and yet its tree-width is $n-1$.
        \end{proof}

        \subsection{Modular-width} 
        The modular-width of a graph was defined by Gajarsk\'{y}, Lampis, and Ordyniak~\cite{GLO2013}.
        We remark that this modular-width is different 
        from the modular-width defined by Rao~\cite{Rao2008}. 
       
        A \emph{module} of a graph $G$ is a set $M$ of vertices such that 
        no vertex in $V(G)\setminus M$ has both a neighbor and a non-neighbor in $M$.
        A module $M$ is \emph{trivial} if $\abs{M}\le 1$ or $M=V(G)$. 
        A graph is \emph{prime} if it has no non-trivial modules.
        
        For a positive integer $k$, let $\mathcal M_k$ be the smallest class of graphs having the following four properties:
        \begin{enumerate}
           \item $\mathcal M_k$ contains all graphs of at most $1$ vertex.
           \item If $G$ and $H$ are in $\mathcal M_k$, then so is the disjoint union of $G$ and $H$.
           \item If $G$ and $H$ are in $\mathcal M_k$, then 
           so is the complete join of $G$ and $H$, that is the graph obtained from the disjoint union of $G$ and $H$ by adding edges between all pairs of vertices in $V(G)$ and $V(H)$.
           \item If $G_1$, $G_2$, $\ldots$, $G_m$ are graphs in $\mathcal M_k$ for some 
           $m\le k$ and $G$ is a graph on the vertex set $\{v_1,v_2,\ldots,v_m\}$, then 
           $\mathcal M_k$ contains 
           the graph obtained from $G$ by substituting $v_i$ with $G_i$ for all $1\le i\le m$. 
        \end{enumerate}
        The \emph{modular-width} of a graph $G$, denoted by $\mw(G)$, is the minimum positive integer $k$ such that $G\in \mathcal M_k$.

        We will use the fact that if every prime induced subgraph of a graph $G$ has at most $k$ vertices, then 
        the modular-width of $G$ is at most $k$.

        \begin{proposition}\label{prop:modularwidth}
               Every rank $1$-scattered class of graphs has bounded modular-width.
        \end{proposition}
        \begin{proof}

            We claim that if $m=\rkbrit_1(G)$, then 
            every prime induced subgraph of $G$ has less than $R(f(m+2); 2)$ vertices, where $f$ is the function in Theorem~\ref{thm:largebipartite}.
            Suppose for contradiction that a prime induced subgraph $H$ of $G$ has at least $R(f(m+2); 2)$ vertices.
            Then by Ramsey's theorem, $H$ has a clique or an independent set $A$ of size $f(m+2)$.
            For two vertices $v$, $w$ in $A$, 
            since $\{v,w\}$ is not a module of $H$, 
            $N_H(v)\setminus A\neq N_H(w)\setminus A$.
            So, by Theorem~\ref{thm:largebipartite}, 
            $H[A, V(H)\setminus A]$ contains an induced subgraph isomorphic to $\S_{m+2}\mat \S_{m+2}$, $\S_{m+2}\tri \S_{m+2}$, or $\S_{m+2}\antimat\S_{m+2}$.
            It implies that the matrix $A_H[A, V(H)\setminus A]$ has rank at least $m+1$, and therefore 
            $\cutrk_G(A)\ge m+1$, contradicting 
            the assumption that $G$ has rank $1$-brittleness $m$.
            Thus, every prime induced subgraph of $G$ has less than $R(f(m+2); 2)$ vertices, 
            and so $G$ has modular-width less than  $R(f(m+2); 2)$.
        \end{proof}
        \begin{proposition}
               There is a rank $2$-scattered class of graphs 
               having unbounded modular-width.
        \end{proposition}
        \begin{proof}
               It is easy to see that $K_n\mat K_n$ is prime if $n\ge 3$, and thus it has modular-width $2n$ if $n\ge 3$.
               But $\rkbrit_2(K_n\mat K_n)\le 2$
               and so $\{K_n\mat K_n:n\ge 3\}$ is rank $2$-scattered.
        \end{proof} 
        
\subsection{Edge $k$-scatteredness}
\begin{proposition}\label{prop:edge-vm}
        \begin{enumerate}[(1)]
                \item Every edge $k$-scattered class of graphs is vertex $\binom{k}{2}$-scattered
                and matching $k$-scattered.
                \item For every integer $k>1$, there exists an edge $k$-scattered class of graphs that is neither vertex $(\binom{k}{2}-1)$-scattered nor matching $(k-1)$-scattered.
        \end{enumerate}
\end{proposition}
\begin{proof}   
        (1)    
        We claim that \[\vbrit_{\binom{k}{2}}(G)\le 4\ebrit_{k}(G)
        \text{ and }
        \mbrit_{k}(G)\le \ebrit_{k}(G).\] 
	
	Let $P=(X_1,X_2,\ldots,X_t)$ be a partition of $V(G)$
        such that $\abs{X_i}\le k$ for all $i$ 
        and the $\eta_{G}$-width of $P$ is $\ebrit_{k}(G)$.
        Then, the number of edges meeting two parts of $P$ is at most $2\ebrit_{k}(G)$. 
        Now, we take a partition $P'$ of $E(G)$ such that 
        for each $i\in \{1, 2, \ldots, t\}$, all the edges in $G[X_i]$ form one part of $P'$, 
        and individual edges meeting two parts of $X_1, \ldots, X_t$ form individual parts.
        Then $P'$ has $\kappa_G$-width at most $4\ebrit_{k}(G)$ and
        each part of $P'$ has at most $\binom{k}{2}$ edges. Thus we conclude that 
        $\vbrit_{\binom{k}{2}}(G)\le 4\ebrit_{k}(G)$.

        Note that for every vertex set $A$ of $G$, $\nu_G(A)\le \eta_G(A)$.
        Thus, $P$ has $\nu_G$-width at most $\ebrit_{k}(G)$, which implies that $\mbrit_{k}(G)\le \ebrit_{k}(G)$.   
        
        \medskip\noindent(2)
        The graph $(2\ell+1)K_{k}$ has edge $k$-brittleness $0$, while it has vertex $(\binom{k}{2}-1)$-brittleness at least $\ell+1$ by Lemma~\ref{lem:etabase1} and has matching $(k-1)$-brittleness at least $\ell+1$ by Lemma~\ref{lem:edgeforward1}.
\end{proof}
\subsection{Vertex $k$-scatteredness and matching $k$-scatteredness}
\begin{proposition}\label{prop:vertex-matching}
        \begin{enumerate}[(1)]
                \item    Every vertex $k$-scattered class of graphs is  matching $(k+1)$-scattered.
                \item    For every positive integer $k$, there exists a vertex $k$-scattered class of graphs that is not matching $k$-scattered.
                \item If a class of graphs is matching $k$-scattered for some integer $k$, then there exists an integer $k'$ such that it is vertex $k'$-scattered.
        \end{enumerate}
\end{proposition}
\begin{proof}
        \noindent (1)
        We claim that \[ \mbrit_{k+1}(G)\le 2\vbrit_{k}(G).\]
        Let  $P=(X_1,X_2,\ldots,X_t)$ be a partition of $E(G)$
        such that $\abs{X_i}\le k$ for all $i$
        and the $\kappa_{G}$-width of $P$ is $\vbrit_{k}(G)$.
        By the probabilistic argument, there are at most $2\vbrit_{k}(G)$ vertices meeting at least two parts of $P$.
        Let $S$ be the set of vertices incident with edges meeting at least two parts of $P$.
        Since no vertex of $G-S$ meets at least two parts of $P$, 
        each connected component $H$ of $G-S$ has at most $k$ edges and at most $k+1$ vertices. 
        Now, we take a partition $P'$ of $V(G)$ so that the vertex set of each connected component of $G-S$ forms a part, and vertices in $S$ form individual parts. 
        It is not hard to see that $P'$ has $\eta_G$-width at most $\abs{S}\le 2\vbrit_{k_2}(G)$.
        Thus, $\mbrit_{k+1}(G)\le 2\vbrit_{k}(G)$.
        
        \medskip\noindent(2)
        The graph $(2\ell+1)P_{k+1}$ has vertex $k$-brittleness $0$ while it has matching $k$-brittleness at least $\ell+1$ by Lemma~\ref{lem:edgeforward2}.

        \medskip\noindent(3)
        We claim that 
        \[\text{if }\mbrit_k(G)\le m,\text{ then }
        \vbrit_{\binom{4m+k}{2}}(G)\le 4m.\]
        Let $P=(X_1,X_2,\ldots,X_t)$ be a partition of $V(G)$
        such that $\abs{X_i}\le k$ for all~$i$
        and the $\nu_{G}$-width of $P$ is at most $m$.
        Let $k'=\binom{4m+k}{2}$.
        Let $H$ be the subgraph of $G$ consisting of edges meeting two parts of $P$.
        Let $M$ be a maximum matching of $H$.
        If $\abs{M}>2m$, then 
        by the probabilistic argument, there is a subset $\mathcal I$ of $\{1,2,\ldots,t\}$ such that at least half of the edges in $M$ joins a vertex in $X_i$ for some $i\in \mathcal I$ to a vertex in $X_j$ for some $j\notin \mathcal I$, contradicting the assumption that $\nu_G$-width of $P$ is at most $m$. 
 
        Thus, $\abs{M}\le 2m$. 
        Let $S$ be the set of ends of $M$. Then $\abs{S}\le 4m$
        and $S$ meets every edge of $H$.
        Then every component of $G-S$ is a subset of $X_i$ for some $i$
        and so has at most $k$ vertices.
        Now, we take a partition $P'$ of $E(G)$ so that 
        for each component $C$ of $G-S$, the set of edges incident with a vertex in $C$ forms a part,
        and the edges joining two vertices of $S$ form individual parts. 
        Then each part of $P'$ has at most $\binom{4m+k}{2}$ edges and 
        no vertex outside of $S$ meets more than one part of $P'$,
        meaning that $\kappa_G$-width of $P'$ is at most $4m$. 
        Thus, $\vbrit_{\binom{4m+k}{2}}(G)\le 4m$.
\end{proof}
\begin{proposition}\label{prop:matching-rank}
        \begin{enumerate}[(1)]
                \item Every  matching $k$-scattered class of graphs is rank $k$-scattered.
                \item For every integer $k>1$, there exists a matching $k$-scattered class of graphs that is not rank $(k-1)$-scattered.
        \end{enumerate}
\end{proposition}
\begin{proof}
        Observe that if a square $0$-$1$ matrix is non-singular,
        then the corresponding bipartite graph has a perfect matching.
        Thus, if a binary matrix $M$ has rank $r$,
        then its corresponding bipartite graph has a matching of size $r$.
        Thus, for all $S\subseteq V(G)$, $\cutrk_G(S)\le \nu_G(S)$.
        This implies that $\rkbrit_k(G)\le \mbrit_k(G)$.

        It is easy to see (2) from $\{nK_{k}:n\ge 1\}$ by Lemma~\ref{lem:edgeforward1}.
\end{proof}

\subsection{Tree-depth}                
A \emph{rooted forest} is a forest in which every connected component has  a specified node called a \emph{root}. The \emph{closure} of a rooted forest $T$ is the graph obtained from $T$ by adding an edge between every vertex and all its ancestors.
The \emph{height} of a rooted forest is the number of vertices
in  a longest path from a root to a leaf.
The \emph{tree-depth} of a graph  $G$, denoted by $\td(G)$, is
the minimum height of a rooted forest whose closure contains $G$ 
as a subgraph, see the book \cite[Chapter 6]{NO2012}.

Let us show that 
every matching $k$-scattered class of graphs has bounded tree-depth.

\begin{proposition}\label{prop:tdmbrit}
        Every matching $k$-scattered class of graphs has bounded tree-depth.
\end{proposition}

\begin{proof}

It is enough to prove that \[\td(G)\le 4\mbrit_k(G)+k.\]
Let $P=(X_1,X_2,\ldots,X_t)$ be a partition of $V(G)$
          such that $\abs{X_i}\le k$ for all $i$
          and the $\nu_{G}$-width of $P$ is $\mbrit_k(G)$.
          Let $M$ be a maximal matching of $G$ such that 
          every edge of $M$ is incident with two sets of $\{X_1, X_2, \ldots, X_t\}$.
          If $\abs{M}\ge 2\mbrit_k(G)+1$, then there exists a subset $\mathcal{I}$ of $\{1, 2, \ldots, t\}$ such that 
          at least $\mbrit_k(G)+1$ edges of $M$ are incident with 
          both $\bigcup_{i\in \mathcal{I}} X_i$ and $V(G)\setminus (\bigcup_{i\in \mathcal{I}} X_i)$, 
          which implies that the $\nu_{G}$-width of $P$ is at least  $\mbrit_k(G)+1$, a contradiction.
          Therefore, $\abs{M}\le 2\mbrit_k(G)$.
          
          Let $U$ be the set of all vertices incident with an edge of $M$.
          Then $\abs{U}\le 4\mbrit_k(G)$.
          By the choice of $M$, $G-U$ has no edges incident with two parts of $P$.
		So, $G-U$ has tree-depth at most $k$ and $G$ has tree-depth at most $4\mbrit_k(G)+k$.
\end{proof}

By Proposition~\ref{prop:tdmbrit}, 
every matching $k$-scattered class of graphs  has 
bounded path-width and bounded tree-width,
due to the inequality $\tw(G)\le \pw(G)\le \td(G)-1$ \cite{BGHK1995}, where $\tw$ denotes the tree-width and $\pw$ denotes the path-width.
\begin{proposition}
        There is a class of graphs of bounded tree-depth
        that is not rank $k$-scattered for any integer $k$.
\end{proposition}
\begin{proof}
        The graph $mK_{1,n}$ has tree-depth $2$ and yet its rank $k$-brittleness is at least $m/2$ when $n\ge k$ by Lemma~\ref{lem:edgeforward1}.
\end{proof}

\subsection{Shrub-depth and rank-depth}  
As a dense analogue of tree-depth, Ganian, Hlin\v{e}n\'{y}, Ne\v{s}et\v{r}il, Obdr\v{z}\'{a}lek, and
Ossona~de Mendez~\cite{GanianHNOO2019} proposed the notion of shrub-depth.
DeVos, Kwon, and Oum~\cite{DKO2019} introduced the notion of rank-depth of $G$ as the branch-depth of $ \cutrk_G$, and 
showed that a class of graphs has bounded rank-depth if and only if it has bounded shrub-depth.
So we will omit the definition of shrub-depth and review the definition of branch-depth instead.

A \emph{radius} of a tree is the minimum $r$ such that there is a node having distance at most $r$ from every node.
For a function $\lambda:2^E\to\mathbb{Z}^{\ge 0}$ on the subsets of a finite set $E$, 
a \emph{decomposition} of $\lambda$ is 
a pair $(T,\sigma)$ of a tree $T$ with at least one internal node
and a bijection $\sigma$ from $E$ to the set of leaves of $T$.
The \emph{radius} of a decomposition $(T,\sigma)$ 
is defined to be the radius of the tree $T$. 
For an internal node $v \in V(T)$, the components of the graph $T - v$ give rise to a partition $\mathcal P_v$ of $E$ by $\sigma$. 
The \emph{width} of $v$ is defined to be 
\[\max_{ \mathcal{P'} \subseteq \mathcal P_v } \lambda \left( \bigcup_{X \in \mathcal{P'} } X \right).\]
The \emph{width} of the decomposition $(T,\sigma)$ is the maximum width of an internal node of $T$.  We say that 
a decomposition $(T,\sigma)$ is a $(k,r)$-\emph{decomposition} of $\lambda$ if the width is at most $k$ and the radius is at most~$r$.
The \emph{branch-depth} of $\lambda$ is the minimum $k$ such that there exists a $(k,k)$-decomposition of $\lambda$.
If $\abs{E}<2$, then there exists no decomposition and we define $\lambda$ to have branch-depth $\lambda(\emptyset)$.

We denote by $\rd(G)$ the rank-depth of a graph, that is the branch-depth of $\cutrk_G$. We now prove that every rank $k$-scattered class of graphs has bounded rank-depth.

\begin{proposition}\label{prop:rdrbrit}
        Every rank $k$-scattered class of graphs has bounded rank-depth.
\end{proposition}

\begin{proof}
        We claim that \[\rd(G)\le  \max (k, \rkbrit_k(G), 2).\]
        Let $P=(X_1,X_2,\ldots,X_t)$ be a partition of $V(G)$
          such that $\abs{X_i}\le k$ for all $i$ 
          and the $\cutrk_{G}$-width of $P$ is $\rkbrit_k(G)$.
          We can obtain a $( \max (k, \rkbrit_k(G)) , 2)$-decomposition for $\cutrk_G$ as follows.
          Let $T$ be a tree obtained from $K_{1,t}$ with center $r$ and leaves $r_1, r_2, \ldots, r_t$
          by attaching $\abs{X_i}$ leaves to $r_i$ for each $i$.
          We map all vertices of $X_i$ to distinct leaves adjacent to $r_i$.
          Then the width of $r$ is $\rkbrit_k(G)$ and the width of $r_i$ is 
          at most $k$.
\end{proof}
\subsection{Linear rank-width}
Let us present the definition of linear rank-width \cite{Ganian2011,JKO2014,Oum2016}.
For a graph $G$, an ordering $(x_1, \ldots, x_n)$ of the vertex set $V(G)$ is called a \emph{linear layout} of $G$.  
	If $\abs{V(G)}\ge 2$, then the \emph{width} of a linear layout $(x_1,\ldots, x_n)$ of $G$ is defined
as
$\displaystyle\max_{1\le i\le n-1}\cutrk_G(\{x_1,\ldots,x_i\})$,
and if $\abs{V(G)}=1$, then the width is defined to be $0$.
	The \emph{linear rank-width} of $G$, denoted by $\lrw(G)$, is defined as the minimum width over all linear layouts of $G$. 
	For two orderings $(x_1, \ldots, x_n)$, $(y_1, \ldots, y_m)$, we write $(x_1, \ldots, x_n)\oplus (y_1, \ldots, y_m):=(x_1, \ldots, x_n, y_1, \ldots, y_m)$
        to denote the concatenation of two orderings.

We now aim to obtain an inequality between linear rank-width and rank $k$-brittleness.
Kwon, McCarty, Oum, and Wollan~\cite{KMSW2019} observed that $\lrw(G)\le \rd(G)^2$, and combining it with Proposition~\ref{prop:rdrbrit}, we can obtain a quadratic upper bound of linear rank-width in terms of rank $k$-brittleness.
Instead, we will obtain a linear upper bound directly.
For that, we use the submodularity of the matrix rank function.
	
	\begin{proposition}[See {\cite[Proposition 2.1.9]{Murota2000}}]\label{prop:submodularity}
	Let $M$ be a matrix over a field $\mathbb F$. Let $C$ be the set of column indexes of $M$, and 
	$R$ be the set of row indexes of $M$. Then for all $X_1, X_2\subseteq R$ and $Y_1, Y_2\subseteq C$, 
	\begin{multline*}
          \rank( M[X_1, Y_1])+\rank (M[X_2, Y_2])\ge\\
          \rank (M[X_1\cap X_2, Y_1\cup Y_2])+ \rank (M[X_1\cup X_2, Y_1\cap Y_2]).
        \end{multline*}
	\end{proposition}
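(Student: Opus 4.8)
The plan is to reduce the inequality to the modular law for subspace dimensions. Write $R$ and $C$ for the sets of row and column indices of $M$. For $Y\subseteq C$, let $V_Y\subseteq\mathbb F^R$ be the span of the columns of $M$ indexed by $Y$, so that $V_\emptyset=\{0\}$, $V_{Y_1\cup Y_2}=V_{Y_1}+V_{Y_2}$, and $V_{Y_1\cap Y_2}\subseteq V_{Y_1}\cap V_{Y_2}$ (possibly a strict inclusion). For $X\subseteq R$, let $\pi_X\colon\mathbb F^R\to\mathbb F^X$ be the coordinate projection and put $Q_X=\ker\pi_X=\{v\in\mathbb F^R:v_i=0\text{ for all }i\in X\}$; then $\dim Q_X=\abs R-\abs X$, $Q_{X_1\cup X_2}=Q_{X_1}\cap Q_{X_2}$, and $Q_{X_1\cap X_2}=Q_{X_1}+Q_{X_2}$. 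The last identity is the only one of these needing an argument: a vector vanishing on $X_1\cap X_2$ is supported on $(R\setminus X_1)\cup(X_1\setminus X_2)$, and its restrictions to these two disjoint blocks are supported off $X_1$ and off $X_2$ respectively, so their sum recovers it.

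First I would establish $\rank(M[X,Y])=\dim\pi_X(V_Y)$, which holds because the $y$-th column of $M[X,Y]$ is $\pi_X$ applied to the $y$-th column of $M$. Since $\pi_X$ restricted to $V_Y+Q_X$ maps onto $\pi_X(V_Y)$ with kernel $Q_X$, the rank-nullity theorem gives
\[
\rank(M[X,Y])=\dim(V_Y+Q_X)-\dim Q_X .
\]
Now $\dim Q_{X_1}+\dim Q_{X_2}=2\abs R-\abs{X_1}-\abs{X_2}=\dim Q_{X_1\cap X_2}+\dim Q_{X_1\cup X_2}$, so substituting the above formula into the claimed inequality and cancelling the $\dim Q_X$ terms shows that it is equivalent to
\begin{multline*}
\dim(V_{Y_1}+Q_{X_1})+\dim(V_{Y_2}+Q_{X_2})\ge\\
\dim(V_{Y_1\cup Y_2}+Q_{X_1\cap X_2})+\dim(V_{Y_1\cap Y_2}+Q_{X_1\cup X_2}).
\end{multline*}

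To finish, I would set $A=V_{Y_1}+Q_{X_1}$ and $B=V_{Y_2}+Q_{X_2}$. The identities above give $V_{Y_1\cup Y_2}+Q_{X_1\cap X_2}=(V_{Y_1}+V_{Y_2})+(Q_{X_1}+Q_{X_2})=A+B$ and $V_{Y_1\cap Y_2}+Q_{X_1\cup X_2}\subseteq(V_{Y_1}\cap V_{Y_2})+(Q_{X_1}\cap Q_{X_2})\subseteq A\cap B$, the last inclusion because any $v+w$ with $v\in V_{Y_1}\cap V_{Y_2}$ and $w\in Q_{X_1}\cap Q_{X_2}$ lies in both $V_{Y_1}+Q_{X_1}$ and $V_{Y_2}+Q_{X_2}$. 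Hence the right-hand side of the displayed inequality is at most $\dim(A+B)+\dim(A\cap B)$, which equals $\dim A+\dim B$ by the modular law, i.e.\ the left-hand side. The degenerate cases in which some $X_i$ or $Y_i$ is empty are covered by the same formulas, using $V_\emptyset=\{0\}$ and $Q_\emptyset=\mathbb F^R$.

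The only real obstacle is the bookkeeping. The identity $Q_{X_1\cap X_2}=Q_{X_1}+Q_{X_2}$ runs against intuition, since intersecting index sets corresponds to \emph{summing} the support-avoiding subspaces; and the containment $(V_{Y_1}\cap V_{Y_2})+(Q_{X_1}\cap Q_{X_2})\subseteq(V_{Y_1}+Q_{X_1})\cap(V_{Y_2}+Q_{X_2})$, which can be strict, must be checked directly rather than invoked as an identity. Beyond these two points, every step is routine linear algebra, and no genuinely hard step arises.
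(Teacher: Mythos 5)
Your proof is correct. Note that the paper does not prove this proposition at all: it is quoted from Murota's book (Proposition 2.1.9) and used as a black box, so there is no in-paper argument to compare against. Your reduction is the standard clean route: the identity $\rank (M[X,Y])=\dim(V_Y+Q_X)-\dim Q_X$ (with $V_Y$ the column span and $Q_X$ the coordinate subspace of vectors vanishing on $X$), the complementary identities $Q_{X_1\cup X_2}=Q_{X_1}\cap Q_{X_2}$, $Q_{X_1\cap X_2}=Q_{X_1}+Q_{X_2}$, $\dim Q_{X_1}+\dim Q_{X_2}=\dim Q_{X_1\cap X_2}+\dim Q_{X_1\cup X_2}$, and then the modular law $\dim(A+B)+\dim(A\cap B)=\dim A+\dim B$ applied to $A=V_{Y_1}+Q_{X_1}$, $B=V_{Y_2}+Q_{X_2}$, together with the one-sided containment $V_{Y_1\cap Y_2}+Q_{X_1\cup X_2}\subseteq A\cap B$, which is exactly where the inequality (rather than an equality) comes from. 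All the individual steps check out, including the slightly counterintuitive identity $Q_{X_1\cap X_2}=Q_{X_1}+Q_{X_2}$ and the treatment of empty index sets, so your write-up would serve as a legitimate self-contained substitute for the citation.
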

	
	\begin{proposition}\label{prop:inequality}
          For every integer $k>0$,
          the linear rank-width of a graph $G$ is at most 
	\( \beta_k^\rho (G)+\left\lfloor k/2 \right\rfloor\).
	\end{proposition}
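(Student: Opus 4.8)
The plan is to start from a partition $(X_1,\ldots,X_m)$ of $V(G)$ realizing the rank $k$-brittleness, so that $\abs{X_i}\le k$ for all $i$ and the $\cutrk_G$-width of this partition is $\beta_k^\rho(G)$. From such a partition I want to build a linear layout whose width is controlled by $\beta_k^\rho(G)$ plus a small error term depending only on $k$. The natural layout is obtained by concatenating arbitrary internal orderings of the parts: pick any ordering $\sigma_i$ of $X_i$ for each $i$, and take $\sigma_1\oplus\sigma_2\oplus\cdots\oplus\sigma_m$. The point is that as we sweep a prefix of this layout, the prefix always has the form $(X_1\cup\cdots\cup X_{j-1})\cup Y$ where $Y$ is a prefix of $\sigma_j$ for some single part $X_j$, so $\abs{Y}\le k-1$ (or $Y=X_j$, which is already covered by another prefix).

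The key step is a submodularity estimate. Write $S=X_1\cup\cdots\cup X_{j-1}$ and let $Y\subseteq X_j$ be the part of $X_j$ currently included; I want to bound $\cutrk_G(S\cup Y)$. Applying Proposition~\ref{prop:submodularity} to the adjacency matrix $A(G)$ with row/column sets chosen appropriately, I get
\[
\cutrk_G(S\cup Y)\le \cutrk_G(S)+\abs{Y}
\quad\text{and}\quad
\cutrk_G(S\cup Y)\le \cutrk_G\bigl(S\cup X_j\bigr)+\abs{X_j\setminus Y},
\]
since adding or removing a single vertex changes the cut-rank by at most one. The first bound is good when $\abs{Y}$ is small, the second when $\abs{X_j\setminus Y}$ is small; since $\abs{Y}+\abs{X_j\setminus Y}=\abs{X_j}\le k$, one of the two is at most $\lfloor k/2\rfloor$. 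Both $\cutrk_G(S)$ and $\cutrk_G(S\cup X_j)$ are cut-ranks of unions of parts of the partition, hence at most $\beta_k^\rho(G)$. Therefore every prefix of the layout has cut-rank at most $\beta_k^\rho(G)+\lfloor k/2\rfloor$, which gives the desired bound on the width, and hence on the linear rank-width.

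I would also need to handle the trivial boundary cases: if $\abs{V(G)}\le 1$ the linear rank-width is $0$ and the inequality holds; and when $Y=\emptyset$ or $Y=X_j$ the prefix is exactly a union of parts, so its cut-rank is at most $\beta_k^\rho(G)$ directly. The main obstacle is making the submodularity application precise: one has to be careful that the two-variable submodular inequality of Proposition~\ref{prop:submodularity} is applied with the right four sets so that it really yields $\cutrk_G(S\cup Y)+(\text{something nonnegative})\le \cutrk_G(S)+\cutrk_G(S\cup X_j)$ or the simpler one-vertex-at-a-time bounds; iterating the one-vertex bound $\abs{Y}$ times (resp. $\abs{X_j\setminus Y}$ times) is the cleanest route and avoids any subtlety, at the cost of a short induction. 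Everything else is bookkeeping.
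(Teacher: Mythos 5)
Your proof is correct and follows essentially the same approach as the paper: concatenate arbitrary orderings of the parts, observe that every prefix is within $\lfloor k/2\rfloor$ vertices of a union of parts, and use the fact that the cut-rank changes by at most one per vertex (which the paper packages as a single submodularity application together with the symmetry $\rho_G(S)=\rho_G(V(G)\setminus S)$). No gaps.
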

	\begin{proof}
	Let $x:=\beta_k^\rho (G)$.
        By the definition of rank $k$-brittleness, 
	there exists a partition $(X_1, X_2, \ldots, X_t)$ of $V(G)$ such that
       for each $i\in \{1, 2, \ldots, t\}$, $\abs{X_i}\le k$, and
      for every $I\subseteq \{1, 2, \ldots, t\}$, 	
	$\cutrk_G(\bigcup_{i\in I} X_i)\le x$.
	For each $i\in \{1, 2, \ldots, t\}$, let $L_i$ be any ordering of $X_i$.
	
	We claim that the ordering $L=L_1\oplus L_2\oplus \cdots \oplus L_t$ is a linear layout of $G$ having width at most $x+ \left\lfloor k/2 \right\rfloor$.
        It suffices to prove that for each $i\in \{1,2,\ldots,t\}$
        and a partition $(A,B)$ of $X_i$,
        $\rho_G(A\cup \bigcup_{j<i} X_j)\le x+\lfloor k/2\rfloor$.
        By symmetry, we may assume that $\abs{A}\le \lfloor k/2\rfloor$.
        Let $X=\bigcup_{j<i} X_j$ and $Y=V(G)\setminus X$.
        Let $M$ be the adjacency matrix of $G$.
	By Proposition~\ref{prop:submodularity}, 
	\begin{align*}
          \cutrk_G(A\cup X)&=\rank M[A\cup X, Y\setminus A]
                             + \rank M[\emptyset, Y] \\
                           &\le
                             \rank M[X, Y]+ \rank M[A,Y\setminus A] 
                           \le x+  \left\lfloor k/2 \right\rfloor.
	\end{align*}
        This proves the proposition.
	\end{proof}
	As the rank-width~\cite{Oum2004} of a graph is always less than or equal to its linear rank-width, we can deduce that the rank-width of a graph $G$ is at most 
	$\beta_k^\rho (G)+\left\lfloor k/2 \right\rfloor$.

 \begin{proposition}
        There is a class of graphs of modular-width $1$ that has unbounded linear rank-width. 
 \end{proposition}
 \begin{proof}
        Graphs of modular-width $1$ are precisely cographs~\cite{CLB1981}
        and cographs have unbounded linear rank-width, shown by Gurski and Wanke~\cite{GW2005a}. 
 \end{proof}
 
\section{An application}\label{sec:appl}
As an application of Theorem~\ref{thm:main4}, we prove that 
for fixed positive integers $m$ and $n$, 
$mK_{1,n}$-vertex-minor free graphs have bounded linear rank-width.
We will use the fact that every sufficiently large connected graph contains either a vertex of large degree or a long induced path.

\begin{proposition}[See Diestel~{\cite[Proposition 1.3.3]{Diestel2010}}]\label{prop:binary}
  For integers $k>3$ and $\ell>0$,  
  every connected graph on at least %
  $\frac{k-1}{k-3}(k-2)^{\ell-2}$
  vertices contains 
  a vertex of degree at least $k$ or an induced path on $\ell$ vertices.
\end{proposition}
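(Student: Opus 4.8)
The plan is to prove the statement by contradiction. Suppose $G$ is a connected graph with $\abs{V(G)} \ge \frac{k-1}{k-3}(k-2)^{\ell-2}$ that has no vertex of degree at least $k$ and no induced path on $\ell$ vertices; I will bound $\abs{V(G)}$ from above and reach a contradiction. The degree hypothesis simply says that the maximum degree of $G$ is at most $k-1$.

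The one real idea is the standard fact that a shortest path between two vertices of a graph is an induced path. Consequently, if two vertices of $G$ are at distance $d$, then $G$ contains an induced path on $d+1$ vertices, and since $G$ has no induced path on $\ell$ vertices we get $d+1 \le \ell-1$. Thus every pair of vertices of $G$ is within distance $\ell-2$; in particular, fixing any vertex $r$, every vertex of $G$ lies within distance $\ell-2$ of $r$ because $G$ is connected.

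Next I would carry out a Moore-type count of the vertices in the ball of radius $\ell-2$ around $r$. For $i \ge 1$, let $L_i$ be the set of vertices at distance exactly $i$ from $r$. Every vertex of $L_1$ is a neighbor of $r$, so $\abs{L_1} \le k-1$. For $i \ge 2$, every vertex of $L_i$ has a neighbor in $L_{i-1}$, while every vertex of $L_{i-1}$ already has a neighbor in $L_{i-2}$ and hence at most $k-2$ neighbors in $L_i$; therefore $\abs{L_i} \le (k-2)\abs{L_{i-1}}$, which gives $\abs{L_i} \le (k-1)(k-2)^{i-1}$ for all $i \ge 1$ by induction. Since $V(G)$ is the disjoint union of $L_0=\{r\}$ and the $L_i$ with $1 \le i \le \ell-2$, we obtain
\[
  \abs{V(G)} \le 1 + \sum_{i=1}^{\ell-2} (k-1)(k-2)^{i-1} = 1 + (k-1)\,\frac{(k-2)^{\ell-2}-1}{k-3}.
\]
A short rearrangement shows the right-hand side equals $\frac{k-1}{k-3}(k-2)^{\ell-2} - \frac{2}{k-3}$, which is strictly smaller than $\frac{k-1}{k-3}(k-2)^{\ell-2}$ since $k>3$; this contradicts the hypothesis on $\abs{V(G)}$.

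I do not expect a genuine obstacle here, as the argument is elementary; the only point needing a little care is the behaviour for small $\ell$ (when the geometric sum above is empty or very short), but the diameter bound handles these uniformly — for instance when $\ell=2$ it forces $G$ to have diameter $0$ and hence a single vertex, while the hypothesis forces at least two vertices. The crux is just recognising that having no induced $P_\ell$ bounds the diameter by $\ell-2$; everything after that is the familiar degree-bounded ball estimate together with the final arithmetic comparison.
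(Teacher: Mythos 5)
Your proof is correct: the paper offers no proof of its own (it cites Diestel's Proposition 1.3.3), and your argument is exactly the standard one behind that citation — a shortest path is induced, so excluding an induced $P_\ell$ bounds the diameter (hence the radius) by $\ell-2$, and the degree-bounded Moore count of the ball around a vertex, with maximum degree $k-1$, gives $\abs{V(G)}\le 1+(k-1)\frac{(k-2)^{\ell-2}-1}{k-3}<\frac{k-1}{k-3}(k-2)^{\ell-2}$. Your handling of the degenerate cases ($\ell\le 2$) and the final arithmetic are both fine, so there is nothing to add.
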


Now we are ready to deduce Theorem~\ref{thm:main5} from Theorem~\ref{thm:main4} and Proposition~\ref{prop:inequality}.

\begin{thm:main5}
  For positive integers $m$ and $n$,  
  the class of graphs having no vertex-minor isomorphic to $mK_{1,n}$ has bounded linear rank-width.
\end{thm:main5}
\begin{proof}
  We may assume that $n\ge 3$.
Trivially $K_{1,n}$ is locally equivalent to $K_{n+1}$.
  By Lemma~\ref{lem:lengthonecase}, $P_{2n}$ is locally equivalent to $\S_n\tri\S_n$, and a vertex of degree $n$ in $\S_n\tri\S_n$ gives  a vertex-minor isomorphic to $K_{1,n}$.
  Therefore, by Proposition~\ref{prop:binary}, every connected graph
  on at least
  $ \frac{R(n;2)-1}{R(n;2)-3}(R(n;2)-2)^{2n-2}$
  vertices has a vertex-minor isomorphic to $K_{1,n}$.
  
  Let $k:=\lceil \frac{R(n;2)-1}{R(n;2)-3}(R(n;2)-2)^{2n-2}\rceil-1$.
  Let $\cC$ be the class of graphs having no $mK_{1,n}$ as a vertex-minor.
  Then for every connected graph $H$ on $k+1$ vertices,
  $mH\notin \cC$.
  Therefore by Theorem~\ref{thm:main4},
  $\cC$ is rank $k$-scattered.
  By Proposition~\ref{prop:inequality},
  $\cC$ has bounded linear rank-width.
\end{proof}

\paragraph{Acknowledgement.}
The authors would like to thank anonymous reviewers for their careful reviews and suggestions.

\end{document}